\def\thm@space@setup{%
  \thm@preskip=1cm plus .5cm minus .5cm
  \thm@postskip=.5cm plus .6cm minus .5cm 
}
\newtheorem{thm}{Theorem}
\newtheorem{fact}{Fact}
\newtheorem{lma}{Lemma}
\newtheorem{cor}{Corollary}
\newtheorem{rmk}{Remark}
\newtheorem{ex}{Example}
\numberwithin{thm}{section}
\numberwithin{lma}{section}
\numberwithin{dfn}{section}
\numberwithin{cor}{section}
\numberwithin{rmk}{section}
\numberwithin{prop}{section}
\def\mfd{{\mathfrak d}}
\def\mfm{{\mathfrak m}}
\def\mfc{{\mathfrak C}}
\def\mfa{{\mathfrak A}}
\def\mcm{{\mathcal M}}
\def\mcp{{\mathcal P}}
\def\mcs{{\mathcal S}}
\def\mcA{{\mathcal A}}
\def\mfp{{\mathfrak p}}
\newcommand*{\thmref}[1]{Theorem~\ref{#1}}
\newcommand*{\lmaref}[1]{Lemma~\ref{#1}}
\newcommand*{\rmkref}[1]{Remark~\ref{#1}}
\title{Generalizations of Erd\H{o}s-Kac theorem with applications}
\date{}
\begin{document}





  



\author[1]{Sourabhashis Das}
\affil[1]{Corresponding author, Department of Pure Mathematics, University of Waterloo, 200 University Avenue West, Waterloo, Ontario, Canada, N2L 3G1, \textit{s57das@uwaterloo.ca}.}
\author[2]{Wentang Kuo}
\affil[2]{Department of Pure Mathematics, University of Waterloo, 200 University Avenue West, Waterloo, Ontario, Canada, N2L 3G1, \textit{wtkuo@uwaterloo.ca}.}
\author[3]{Yu-Ru Liu}
\affil[3]{Department of Pure Mathematics, University of Waterloo, 200 University Avenue West, Waterloo, Ontario, Canada, N2L 3G1, \textit{yrliu@uwaterloo.ca}.}

%
%
%

\maketitle 

\begin{abstract}
\footnotetext{\textbf{2020 Mathematics Subject Classification: 11N80, 11K65, 20M32.}}\footnotetext{Keywords: Omega functions, abelian monoids, Erd\H{o}s-Kac theorem, $h$-free and $h$-full elements.}\footnotetext{The research of W. Kuo and Y.-R. Liu is supported by the Natural Sciences and Engineering Research Council of Canada (NSERC) grants.}Let $\omega(n)$ denote the number of distinct prime factors of a natural number $n$. In 1940, Erd\H{o}s and Kac established that $\omega(n)$ obeys the Gaussian distribution over natural numbers, and in 2004, the third author generalized their theorem to all abelian monoids. In this paper, we extend her theorem to any subsets of an abelian monoid satisfying some additional conditions, and apply this result to the subsets of $h$-free and $h$-full elements. We study generalizations of several arithmetic functions, such as the prime counting omega functions and the divisor function in a unified framework. Finally, we apply our results to number fields, global function fields, and geometrically irreducible projective varieties, demonstrating the broad relevance of our approach.
\end{abstract}

\section{Introduction}\label{intro}

Let $n$ be a natural number and let $\omega(n)$ count its number of distinct prime divisors. In \cite{ErdosKac}, Erd\H{o}s and Kac used probabilistic means to establish that $\omega(n)$ obeys the Gaussian distribution with mean $\log \log n$ and variance $\log \log n$ over natural numbers. In particular, for any $\gamma \in \mathbb{R}$, they proved
$$\lim_{x \rightarrow \infty} \frac{1}{x}  \left| \left\{ n \leq x \ : \ n \geq 3, \ \frac{\omega(n) - \log \log n}{\sqrt{\log \log n}} \leq \gamma \right\} \right| = \Phi(\gamma),$$
where 
\begin{equation}\label{phi(a)}
    \Phi(\gamma) = \frac{1}{\sqrt{2 \pi}} \int_{-\infty}^\gamma e^{-u^2/2} \ du,
\end{equation}
and where $|\mcs|$ denotes the cardinality of the set $\mcs$. Following their work, various approaches to the Erd\H{o}s-Kac theorem have been pursued. For example, in \cite{mmp}, Murty, Murty, and Pujahari proved an all-purpose Erd\H{o}s-Kac theorem which applies to diverse settings. In \cite{dkl3}, the authors provided a generalization of the Erd\H{o}s-Kac theorem in number fields. Moreover, the third author, in \cite{liu}, provided a generalization of the Erd\H{o}s-Kac theorem over any countably generated abelian monoid. We extend this work to provide another generalization of the Erd\H{o}s-Kac theorem over any subset of such abelian monoids satisfying some additional conditions. 

Let $\mcp$ be a countable set of elements with a map
$$N: \mcp \rightarrow \mathbb{Z}_{>1}, \quad \mfp \mapsto N(\mfp).$$
We call this map the Norm map. Let $\mcm$ be a free abelian monoid generated by elements of $\mcp$. For each $\mfm \in \mcm$, we write
$$\mfm = \sum_{\mfp \in  \mcp} n_\mfp(\mfm) \mfp,$$
with $n_\mfp(\mfm) \in \mathbb{Z}_{>0} \cup \{ 0 \}$ and $n_\mfp(\mfm) =0$ for all but finitely many $\mfp$. We extend the map $N$ to $\mcm$ as the following:
\begin{align*}
    N : \mcm & \rightarrow \mathbb{Z}_{>0} \\
    \mfm = \sum_{\mfp \in \mcp} n_\mfp(\mfm) \mfp & \longmapsto N(\mfm) := \prod_{\mfp \in \mcp} N(\mfp)^{n_\mfp(\mfm)}. 
\end{align*}
Thus, $N$ can be extended to a monoid homomorphism from $(\mcm,+)$ to $(\mathbb{Z}_{>0},\cdot)$. Let $X$ be a countable subset of $\mathbb{Q}$ that contains the image $\text{Im}(N(\mcm))$ with an extra condition: if $x_1,x_2 \in X$, the fraction $x_1/x_2$ belongs to $X$, too. Without loss of generality, we assume $X = \mathbb{Q}$ or $X = \{ q^z :  z \in \mathbb{Z} \}$ for some $q \in \mathbb{Z}_{>0}$. For interested readers, the details behind $X$ being limited to these two choices are presented in \cite[Theorem 2]{liu}.

Given $\mcp$, $\mcm$, $X$, and for sufficiently large $x \in X$, we assume that the following condition hold: 
\begin{equation}\label{star}
    \mcm(x) := \sum\limits_{\substack{\mfm \in \mcm \\ N(\mfm) \leq x}} 1 = \kappa x + O(x^\theta), \quad \text{ for some } \kappa > 0 \text{ and } 0 \leq \theta < 1. \tag{$\star$}
\end{equation}
Let $\mathcal{\mcs}$ be a subset of infinitely many elements in $\mcm$. For $x \in X$, $x >1$, we define
$$\mcs(x) = \{ \mfm \in \mcs \ : \ N(\mfm) \leq x \}.$$
We assume that $\mcs$ satisfies the following condition:
\begin{equation}\label{Scondition}
    |\mcs(x^{1/2})| = o(|\mcs(x)|) \quad \text{as } x \rightarrow \infty.
\end{equation}
Let $\kappa' > 0$, $0 < \alpha < 1$, and $\gamma \in \mathbb{R}$. Note that, $|S(x)| \ll x$, \eqref{Scondition} is satisfied when $\mcs(x) \sim \kappa' x^\alpha (\log x)^\gamma$, and \eqref{Scondition} fails if $\mcs(x) \sim \kappa' (\log x)^\gamma$. Thus, $\mcs(x) \sim \kappa' x^\alpha (\log x)^\gamma$ would be a good choice for the asymptotic size of $\mcs(x)$ so that \eqref{Scondition} holds.

Let $f$ be a map from $\mcs$ to $\mcm$. For each $\ell \in \mcp$, we write
$$\frac{1}{|\mcs(x)|} \left| \{ \mfm \in \mcs(x) \ : \ n_\ell(f(\mfm)) \geq 1 \} \right| = \lambda_\ell + e_\ell(x),$$
where $\lambda_\ell$ denotes the main term (and is independent of $x$) and $e_\ell = e_\ell(x)$ is the error term. For any sequence of distinct elements $\ell_1, \ell_2, \cdots, \ell_u \in \mcp$, we write  
$$\frac{1}{|\mcs(x)|} \left| \{ \mfm \in \mcs(x) \ : \ n_{\ell_i}(f(\mfm)) \geq 1 \text{ for all } i = 1, \cdots, u \} \right|= \lambda_{\ell_1} \cdots \lambda_{\ell_u} + e_{\ell_1 \cdots \ell_u}(x).$$
In this article, we will use $e_{\ell_1 \cdots \ell_u}$ to abbreviate $e_{\ell_1 \cdots \ell_u}(x)$.

Suppose there exists a $\beta$ with $0 < \beta \leq 1$ and $y = y(x) < x^\beta$ such that the following conditions are satisfied:
   \begin{enumerate}
       \item[(a)] $\left|\{ \ell \in \mcp \ : \ N(\ell) > x^\beta, n_\ell(f(\mfm)) \geq 1 \} \right| = O_\beta(1)$ for each $\mfm \in \mcs(x)$. Here, $O_{Y}$ denotes that the big-O constant depends on the variable set $Y$. 
       \item[(b)] $\sum_{y < N(\ell) \leq x^\beta} \lambda_\ell = o \left( (\log \log x)^{1/2} \right)$.
       \item[(c)] $\sum_{y < N(\ell) \leq x^\beta} |e_\ell| = o \left( (\log \log x)^{1/2} \right)$.
       \item[(d)] $\sum_{N(\ell) \leq y} \lambda_\ell = \log \log x + o \left( (\log \log x)^{1/2} \right)$.
       \item[(e)] $\sum_{N(\ell) \leq y} \lambda_\ell^2 = o \left( (\log \log x)^{1/2} \right)$.
       \item[(f)] For $r \in \mathbb{Z}_{>0}$, let $u$ be any integer picked from $\{ 1, 2, \cdots, r \}$. We have
       $$\sum{\vphantom{\sum}}'' |e_{\ell_1 \cdots \ell_u}| = o \left( (\log \log x)^{-r/2} \right), $$
       where $\sum{\vphantom{\sum}}''$ extends over all $u$-tuples $(\ell_1,\ell_2,\cdots,\ell_u)$ with $N(\ell_i) \leq y$ for all $i \in \{ 1, 2, \cdots, u\}$ and all $\ell_i$'s are distinct.
   \end{enumerate}
For each $\mfm \in \mcm$, we define
$$\omega(\mfm) = \sum_{\substack{\mfp \in \mcp \\ n_\mfp(\mfm) \geq 1}} 1,$$
the number of elements of $\mcp$ that generates $\mfm$, counted without multiplicity. Using this definition and the above conditions, we obtain the following subset generalization of the Erd\H{o}s-Kac theorem:
\begin{thm}\label{yurugen}
Let $\mcp$, $\mcm$, and $X$ satisfy \eqref{star}. Let $\mcs$ be a subset of $\mcm$. For any $x \in X$, let $\mcs(x)$ be the set of elements in $\mcs$ with norm less than or equal to x. Assume that $\mcs$ satisfies condition \eqref{Scondition}. Let $f : \mcs \rightarrow \mcm$. Suppose there exists a $\beta$ with $0 < \beta \leq 1$ and $y = y(x) < x^\beta$ such that the conditions (a) to (f) above hold. 
Then for $\gamma \in \mathbb{R}$, we have
$$\lim_{x \rightarrow \infty} \frac{1}{|\mcs(x)|} \left| \left\{ \mfm \in \mcs(x) \ : \ N(\mfm) \geq 3, \ \frac{\omega(f(\mfm)) - \log \log N(\mfm)}{\sqrt{\log \log N(\mfm)}} \leq \gamma\right\} \right| = \Phi(\gamma),$$
where $\Phi(\gamma)$ is defined in \eqref{phi(a)}.
\end{thm}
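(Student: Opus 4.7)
The plan is to prove the theorem by the method of moments, adapting the framework used in \cite{liu} for the full monoid $\mcm$ to the subset $\mcs$. First, I would use condition \eqref{Scondition} to replace $\log \log N(\mfm)$ by the constant $\log \log x$ in the normalization: since $|\mcs(x^{1/2})| = o(|\mcs(x)|)$, all but a vanishing fraction of $\mfm \in \mcs(x)$ satisfy $N(\mfm) > x^{1/2}$, whence $\log \log N(\mfm) = \log \log x + O(1)$, and the two normalizations induce asymptotically equal distribution functions. Next, I would truncate $\omega$ by defining
$$\omega_y(f(\mfm)) := \sum_{\substack{\ell \in \mcp,\, N(\ell) \leq y \\ n_\ell(f(\mfm)) \geq 1}} 1,$$
and show that $\omega(f(\mfm)) - \omega_y(f(\mfm)) = o(\sqrt{\log \log x})$ on average over $\mcs(x)$. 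Generators $\ell$ with $N(\ell) > x^\beta$ contribute only $O_\beta(1)$ to each $\mfm$ by (a), while those with $y < N(\ell) \leq x^\beta$ contribute, on average, at most $\sum_{y < N(\ell) \leq x^\beta}(\lambda_\ell + |e_\ell|)$, which is $o(\sqrt{\log \log x})$ by (b) and (c). Hence it suffices to establish the Gaussian limit for the normalized $\omega_y$.

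The core of the proof is the computation of the $r$-th central moment
$$M_r(x) := \frac{1}{|\mcs(x)|} \sum_{\mfm \in \mcs(x)} \bigl( \omega_y(f(\mfm)) - \log \log x \bigr)^r$$
for each integer $r \geq 1$. Expanding the $r$-th power yields a multinomial sum over $r$-tuples $(\ell_1, \ldots, \ell_r)$ with $N(\ell_i) \leq y$; grouping tuples by the partition of $\{1, \ldots, r\}$ they determine through coincidences, the inner average over $\mcs(x)$ breaks up into products of $\lambda$-factors plus $e$-error terms of the form $e_{\ell_1 \cdots \ell_u}$. Condition (d) identifies $\sum_{N(\ell) \leq y} \lambda_\ell = \log \log x + o(\sqrt{\log \log x})$, cancelling the subtracted mean to leading order; condition (e) controls the diagonal corrections arising when two indices coincide; and condition (f) bounds the cumulative error contributions from the $e$-terms. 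The surviving combinatorics is that of partitions of $\{1, \ldots, r\}$ into pairs, reproducing the standard Gaussian moment formula $\mu_r = (r-1)!!$ for $r$ even and $0$ for $r$ odd, so that $M_r(x) = \mu_r (\log \log x)^{r/2} + o((\log \log x)^{r/2})$. Since the standard Gaussian is determined by its moments, the Frechet--Shohat theorem then yields convergence in distribution of $(\omega_y(f(\mfm)) - \log \log x)/\sqrt{\log \log x}$ to the standard normal, and combining with the reductions above gives the claimed limit.

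The main obstacle is the moment computation itself: one must carefully partition the $r$-tuples according to their coincidence structure on $\{1, \ldots, r\}$, show that only pair partitions survive as the leading contribution after subtracting $\log \log x$, and verify that the diagonal corrections (via (e)) together with the cumulative $e$-errors (via (f)) fit within the $o((\log \log x)^{r/2})$ remainder uniformly in $r$. The hypotheses (a)--(f) are precisely calibrated for this bookkeeping, but aligning the orders of magnitude across every possible partition type at every order $r$ is the delicate step that drives the proof.
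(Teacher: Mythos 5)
Your proposal is correct and follows the same skeleton as the paper's proof: reduce $\log\log N(\mfm)$ to $\log\log x$ via \eqref{Scondition} (Lemma \ref{linkN(m)x}), truncate $\omega$ to $\omega_y$ using (a)--(c) (Lemma \ref{linkomegay}), and finish by the method of moments. The one genuine divergence is in how the limiting Gaussian moments $\mu_r$ are produced. You evaluate the $r$-th central moment of $\omega_y(f(\mfm))$ head-on, expanding into $r$-tuples, sorting by coincidence partitions, and verifying that pair partitions dominate to recover $\mu_r=(r-1)!!$ for even $r$; this is the classical Halberstam-style computation, and conditions (d), (e), (f) are indeed calibrated exactly for that bookkeeping (note that (f) with $u=1$ is what kills the singleton blocks, whose contribution is a sum of $|e_\ell|$, so you should center at $\sum_{N(\ell)\le y}\lambda_\ell$ rather than at $\log\log x$ and use (d) to pass between the two). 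The paper instead introduces the independent Bernoulli model $\mcs_y=\sum_{N(\ell)\le y}X_\ell$ with $P(X_\ell=1)=\lambda_\ell$, proves only that the $r$-th moments of the normalized $\omega_y(f(\mfm))$ and of the normalized $\mcs_y$ agree asymptotically (Lemma \ref{rthmoment}, which is where (f) enters), and then obtains $\mu_r$ for the model \emph{indirectly}: the Central Limit Theorem (Fact \ref{fact5}) gives distributional convergence of $\mcs_y$, and the uniform moment bound of Lemma \ref{finite_r-thmoment} together with Fact \ref{fact4} upgrades this to moment convergence. What the paper's route buys is that the pair-partition combinatorics is outsourced entirely to the CLT; what your route buys is a self-contained argument that never needs the converse implication of Fact \ref{fact4} or a separate uniform-boundedness lemma. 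Both are valid; the moment-comparison step you would still have to carry out is essentially Lemma \ref{rthmoment} in either case.
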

We list some well-studied applications of this general setting.
\begin{ex}
    Let $\mcp$, $\mcm$, and $X$ satisfy \eqref{star}. Let $\mcs = \mcm$ and $f$ be the identity map. Then, by \thmref{yurugen}, we recover the Erd\H{o}s-Kac theorem over abelian monoids as studied by the third author in \cite[Theorem 1]{liu}.
\end{ex}
\begin{ex}
Let $K/\mathbb{Q}$ be a number field. Let $\mcp$ be the set of prime ideals of $K$. For an ideal $I$ of $K$, let the map $N(I)$ be the absolute norm of $I$. Let $\mcs=\mcm$, $X= \mathbb{Q}$, and $f$ be the identity map. If $K = \mathbb{Q}$, by \thmref{yurugen}, we recover the classical Erd\H{o}s-Kac theorem. Moreover, in \cite[Theorems 1.3 \& 1.4]{dkl3}, the authors apply \thmref{yurugen} to establish the Erd\H{o}s-Kac theorems over the subsets of $h$-free and $h$-full ideals in any number field $K$.
\end{ex}

In the following part, we discuss several other instances where \thmref{yurugen} can be applied. Note that, moving forward, we shall always assume that $\mathcal{P}, \ \mcm$, and $X$ satisfy the condition \eqref{star}.

For a non-zero element $\mathfrak{m} \in \mathcal{M}$, let the prime element factorization of $\mathfrak{m}$ be given as
\begin{equation*}
\mathfrak{m} = s_1 \mfp_1 + \cdots + s_r \mfp_r,
\end{equation*}
where $\mfp_i'$s are its distinct prime elements and $s_i'$s are their respective non-zero multiplicities. Here, 
$$N(\mfm) = N(\mfp_1)^{s_1} \cdots N(\mfp_r)^{s_r}.$$ 
Let $h \geq 2$ be an integer. We say $\mathfrak{m}$ is an $h$\textit{-free} element if $s_i \leq h-1$ for all $i \in \{1, \cdots, r\}$, and we say $\mathfrak{m}$ is an $h$\textit{-full} element if $s_i \geq h$ for all $i \in \{1, \cdots, r\}$. Let $\mathcal{S}_h$ denote the set of $h$-free elements and $\mathcal{N}_h$ denote the set of $h$-full elements. The distributions of these elements are well-studied in the literature, and to demonstrate this we introduce some terminologies.

Let the generalized $\zeta$-function which is an analog of the classical Riemann $\zeta$-function be given as:
\[
\zeta_\mcm(s) := \sum_{\substack{\mathfrak{m}}} \frac1{(N(\mathfrak{m}) )^s} 
= \prod_{\mfp} \Big( 1- N (\mfp)^{-s}\Big)^{-1}
\ \text{for } \Re (s) >1,
\]
where $\mathfrak{m}$ and $\mfp$ respectively range through the non-zero elements in $\mcm$ and the prime elements in $\mcp$. The absolute convergence of the above series is explained in \cite{dkl5}.

Let $x \in X$ and let $\mathcal{S}_h(x)$ denote the set of $h$-free elements in $\mcm$ with norm less than or equal to $x$. Since condition \eqref{star} satisfies \cite[Chapter 4, Axiom A]{jk2}, thus, by \cite[Chapter 4, Proposition 5.5]{jk2}, we have:
\begin{equation}\label{hfreeidealcount}
    |\mathcal{S}_h(x)| = \frac{\kappa}{\zeta_\mcm(h)} x + O_h \big( R_{\mathcal{S}_h}(x) \big),
\end{equation}
where
   \begin{equation}\label{RSh(x)}
    R_{\mathcal{S}_h}(x) = \begin{cases}
    x^\theta  & \text{ if } \frac{1}{h} < \theta, \\
    x^\theta (\log x) & \text{ if } \frac{1}{h} = \theta, \\
    x^{\frac{1}{h}} & \text{ if } \frac{1}{h} > \theta.
\end{cases}
    \end{equation}

\begin{rmk}\label{remark1}
    In this paper, for convenience, we shall use $R_{\mathcal{S}_h}(x) \ll x^{\tau}$ for some $\tau < 1$, which is evident from the above result. 
\end{rmk}

Let $\gamma_h$ be a constant given by
\begin{equation}\label{gammahk}
    \gamma_{\scaleto{h}{4.5pt}} = \gamma_{\scaleto{h,\mcm}{5.5pt}} := \prod_\mfp \left( 1 + \frac{N(\mfp) - N(\mfp)^{1/h}}{N(\mfp)^2 \left( N(\mfp)^{1/h} - 1 \right)}\right).
\end{equation}

Let $\mathcal{N}_h(x)$ denote the set of $h$-full elements in $\mcm$ with norm less than or equal to $x$. For the distribution of $h$-full elements, we have (see \cite[Theorem 1.1]{dkl5})
\begin{equation}\label{hfullideals}
    |\mathcal{N}_h(x)| = \kappa \gamma_{\scaleto{h}{4.5pt}} x^{1/h} + O_h \big( R_{\mathcal{N}_h}(x) \big),
\end{equation}
where $\gamma_{\scaleto{h}{4.5pt}}$ is the constant defined in \eqref{gammahk},
and where
\begin{equation}\label{E2(x)}
    R_{\mathcal{N}_h}(x) = 
    \begin{cases}
        x^{\theta/h} & \text{ if } \frac{h}{h+1} < \theta, \\
        x^{\frac{1}{h+1}} (\log x) & \text{ if } \frac{h}{h+i} = \theta \text{ for some } i \in \{ 1, \cdots, h-1 \},  \\
        x^{\frac{1}{h+1}} & \text{ if } \frac{h}{h+1} > \theta \ \& \ \frac{h}{h+i} \neq \theta \text{ for any } i \in \{ 1, \cdots, h-1 \}.
    \end{cases}
\end{equation}

\begin{rmk}\label{remark2}
    In this paper, again for convenience, we shall use $R_{\mathcal{N}_h}(x) \ll x^{\nu/h}$ for some $\nu < 1$. 
\end{rmk}


Let $x \in X$. Let $\mathfrak{A}$ and $\mathfrak{B}$ be constants defined as
\begin{equation}\label{A}
    \mfa:= \lim_{x \rightarrow \infty} \left( \sum_{\substack{\mfp \in \mathcal{P} \\ N(\mfp) \leq x}} \frac{1}{N(\mfp)} - \log \log x \right).
\end{equation}
The existence of the constant $\mathfrak{A}$ is explained in \cite[Lemma 2]{liuturan}. We define the constants
\begin{equation}\label{C1}
    \mathfrak{C}_1 :=  \mfa - \sum_{\mfp} \frac{N(\mfp)-1}{N(\mfp)(N(\mfp)^h -1)},
\end{equation}
Let $\mathfrak{L}_h(r)$ be the convergent sum defined for $r > h$ as
\begin{equation}\label{lhr}
     \mathfrak{L}_h(r) := \sum_\mfp \frac{1}{N(\mfp)^{(r/h)-1} \left( N(\mfp) - N(\mfp)^{1-1/h} + 1 \right)},
\end{equation}
and let
\begin{equation}\label{D1}
    \mathfrak{D}_1 :=  \mathfrak{A} - \log h + \mathfrak{L}_h(h+1) - \mathfrak{L}_h(2h).
\end{equation}

For the distribution of $\omega(\mfm)$ over $h$-free and $h$-full elements, in \cite[Theorems 1.2 \& 1.3]{dkl5}, we proved
$$\sum_{\mathfrak{m} \in \mathcal{S}_h(x)} \omega(\mfm) = \frac{\kappa}{\zeta_\mcm(h)} x \log \log x + 
\frac{\kappa \mfc_1}{\zeta_\mcm(h)} x
+ O_h \left( \frac{x}{\log x}\right),$$
and
\begin{align*}
\sum_{\mfm \in \mathcal{N}_h(x)} \omega(\mfm) & = \kappa \gamma_{h} x^{1/h} \log \log x + \kappa \gamma_h \mathfrak{D}_1 x^{1/h} + O_h \left( \frac{x^{1/h}}{\log x} \right).
\end{align*}

This proves that $\omega(\mfm)$ has average order $\log \log N(\mfm)$ over $h$-free and over $h$-full elements. Using the study of moments, in \cite{dkl5}, we showed that $\omega(\mfm)$ has normal order $\log \log (\mfm)$ over $h$-free and over $h$-full elements. In this paper, we prove that $\omega(\mfm)$ obeys the Gaussian distribution over $h$-free and over $h$-full elements as well. We achieve this as applications of the following theorem which is a consequence of \thmref{yurugen}:

\begin{thm}\label{less-general}
Let $\mcp$, $\mcm$, and $X$ satisfy condition \eqref{star}. Let $\mcs$ be a subset of $\mcm$. For any $x \in X$, let $\mcs(x)$ be the set of elements in $\mcs$ with norm less than or equal to x. Let $f : \mcs \rightarrow \mcm$. Suppose $\mcs$ satisfy
$$|\mathcal{S}(x)| = C_\beta x^{\beta} + O_\beta \big( x^{\xi \beta} \big),$$
for some $0 < \beta \leq 1$, for some $0 \leq \xi < 1$, and for some fixed constant $C_\beta$. Additionally, for a fixed prime ideal $\mfp$, we assume that the set
$$\mathcal{S}_\mfp(x) := \left\{ \mathfrak{m} \in \mcs(x) \ : \ n_\mfp (f(\mathfrak{m})) \geq 1 \right\}$$
satisfy
\begin{align}\label{npx2A}
    |\mathcal{S}_\mfp(x)| 
    & = \frac{C_{\beta} x^\beta}{N(\mfp)} + \frac{C_{\mfp,\beta}' x^\beta}{N(\mfp)^{1+\eta}} + O_\beta \left( \frac{x^{\xi \beta}}{N(\mfp)^\xi}\right),
\end{align}
for some $\eta > 0$ and where the constant $C_{\mfp,\beta}'$ is uniformly bounded in $\mfp$.
Then for $\gamma \in \mathbb{R}$, we have
$$\lim_{x \rightarrow \infty} \frac{1}{|\mcs(x)|} \left| \left\{ \mfm \in \mcs(x) \ : \ N(\mfm) \geq 3, \ \frac{\omega(f(\mfm)) - \log \log N(\mfm)}{\sqrt{\log \log N(\mfm)}} \leq \gamma\right\} \right| = \Phi(\gamma),$$
where $\Phi(\gamma)$ is defined in \eqref{phi(a)}.
\end{thm}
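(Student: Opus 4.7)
The plan is to derive Theorem~\ref{less-general} from Theorem~\ref{yurugen} by verifying each of its hypotheses from the quantitative asymptotics assumed for $|\mcs(x)|$ and $|\mcs_\mfp(x)|$. Condition~\eqref{Scondition} is immediate: since $\beta>0$ and $\xi<1$,
$$|\mcs(x^{1/2})| = C_\beta x^{\beta/2} + O_\beta(x^{\xi\beta/2}) = o(x^\beta) = o(|\mcs(x)|).$$

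Next I would read off $\lambda_\ell$ and $e_\ell$ from \eqref{npx2A} by dividing through by $|\mcs(x)| = C_\beta x^\beta\bigl(1 + O_\beta(x^{-(1-\xi)\beta})\bigr)$ and expanding the denominator as a geometric series. This identifies the $x$-independent part
$$\lambda_\ell = \frac{1}{N(\ell)} + \frac{C'_{\ell,\beta}}{C_\beta\,N(\ell)^{1+\eta}},$$
together with an error satisfying $|e_\ell| \ll_\beta x^{-(1-\xi)\beta}/N(\ell)^\xi$, using the uniform boundedness of $C'_{\ell,\beta}$ in $\ell$. For the multi-prime terms $e_{\ell_1\cdots\ell_u}$ required by condition~(f), I would need the analogous joint asymptotic for the set $\{\mfm \in \mcs(x) : n_{\ell_i}(f(\mfm)) \geq 1\ \text{for all}\ i\}$, with main term $C_\beta x^\beta \prod_{i=1}^u N(\ell_i)^{-1}$ and error $O_\beta\bigl(x^{\xi\beta}\prod_i N(\ell_i)^{-\xi}\bigr)$. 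Establishing this joint version is the main obstacle: it is not formally in the hypotheses, but in the $h$-free and $h$-full applications it comes from exactly the same multiplicative sieve that yields \eqref{npx2A}, so I would re-run that argument in those settings. Granted such an estimate, the same division-and-expansion yields $|e_{\ell_1\cdots\ell_u}| \ll_\beta x^{-(1-\xi)\beta}\prod_i N(\ell_i)^{-\xi}$.

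Finally, I would fix $y = x^\delta$ for a sufficiently small $\delta = \delta(\beta,\xi,r)>0$ and verify the remaining conditions. Condition~(a) follows from the pigeonhole bound, since each prime $\ell$ with $N(\ell) > x^\beta$ contributes at least $\beta \log x$ to $\log N(f(\mfm))$, and $N(f(\mfm))$ is polynomially bounded in $x$ in the intended applications (where $f$ is the identity). Conditions~(b) and (d) follow from the Mertens-type identity $\sum_{N(\ell) \leq z} N(\ell)^{-1} = \log\log z + \mfa + o(1)$ recalled in \eqref{A}, combined with the convergence of $\sum_\mfp N(\mfp)^{-(1+\eta)}$. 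Condition~(e) follows from $\sum_\mfp N(\mfp)^{-2} < \infty$, a consequence of \eqref{star}. For conditions~(c) and (f), using the Abel-summation bound $\sum_{N(\ell) \leq z} N(\ell)^{-\xi} \ll z^{1-\xi}$ (valid since $|\{\ell \in \mcp : N(\ell) \leq z\}| \ll z$ by \eqref{star}),
$$\sum{\vphantom{\sum}}''\,|e_{\ell_1\cdots\ell_u}| \ \ll_\beta\ x^{-(1-\xi)\beta}\biggl(\sum_{N(\ell) \leq y} N(\ell)^{-\xi}\biggr)^{\!u} \ \ll_\beta\ x^{(1-\xi)(u\delta-\beta)},$$
which decays as a positive power of $x$ once $\delta < \beta/r$, hence is $o((\log\log x)^{-r/2})$. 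Applying Theorem~\ref{yurugen} then delivers the desired Gaussian limit.
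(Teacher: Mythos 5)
Your overall strategy coincides with the paper's: both proofs deduce \thmref{less-general} by verifying hypothesis \eqref{Scondition} and conditions (a)--(f) of \thmref{yurugen}, with the same identification of $\lambda_\ell$ and $e_\ell$, and both must supply the joint asymptotic for $\left|\{\mfm\in\mcs(x): n_{\ell_i}(f(\mfm))\ge 1 \text{ for all } i\}\right|$ as an extra input --- you candidly flag this, while the paper obtains it by invoking ``the Chinese Remainder Theorem.'' However, two points in your execution are genuine problems. First, the truncation $y=x^{\delta}$ with $\delta=\delta(\beta,\xi,r)$ is not permitted: \thmref{yurugen} requires a single $y=y(x)$ for which condition (f) holds for \emph{every} $r\in\mathbb{Z}_{>0}$ simultaneously, because the method of moments (Fact \ref{fact3}) needs all moments of the one truncated distribution to converge, and \lmaref{linkomegay} and \lmaref{linkomegayESy} are equivalences for that single truncation. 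With any fixed $\delta>0$ your bound $x^{(1-\xi)(u\delta-\beta)}$ blows up once $u>\beta/\delta$. The paper's choice $y=x^{\beta/\log\log x}$ resolves this, since then $\bigl(\sum_{N(\ell)\le y}N(\ell)^{-\xi}\bigr)^{u}\ll x^{u(1-\xi)\beta/\log\log x}=x^{o(1)}$ for every fixed $u$, so $\sum''|e_{\ell_1\cdots\ell_u}|\ll x^{-(1-\xi)\beta+o(1)}$ for all $u$ at once, while conditions (b) and (d) still hold because the discrepancy is only $O(\log\log\log x)$.

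Second, your proposed joint main term $C_\beta x^\beta\prod_i N(\ell_i)^{-1}$ is inconsistent with your own $\lambda_\ell=N(\ell)^{-1}+C'_{\ell,\beta}C_\beta^{-1}N(\ell)^{-1-\eta}$: by definition $e_{\ell_1\cdots\ell_u}$ measures the deviation from $\lambda_{\ell_1}\cdots\lambda_{\ell_u}$, so with your main term it would absorb $\prod_iN(\ell_i)^{-1}\bigl(\prod_i(1+O(N(\ell_i)^{-\eta}))-1\bigr)$, whose sum over $u$-tuples with $N(\ell_i)\le y$ is of order $(\log\log x)^{u-1}$ --- nowhere near $o((\log\log x)^{-r/2})$. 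The joint main term must be $C_\beta x^\beta\prod_i\lambda_{\ell_i}$, as in the paper's display preceding its bound on $e_{\mfp_1\cdots\mfp_u}$. Finally, your justification of condition (a) leans on properties of the intended applications rather than the stated hypotheses; the paper's own one-line dismissal of (a) has the same character, so this is a shared looseness rather than a point of divergence.
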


As applications of the above theorem, we prove the Erd\H{o}s-Kac theorem over $h$-free and over $h$-full elements as the following:

\begin{thm}\label{erdoskacforomega}
Let $\mcp$, $\mcm$, and $X$ satisfy condition \eqref{star}. Let $x \in X$ and $h \geq 2$ be any integer. Let $\mathcal{\mcs}_h(x)$ denote the set of $h$-free elements in $\mcm$ with norm less than or equal to $x$. Then for $a \in \mathbb{R}$, we have
$$\lim_{x \rightarrow \infty} \frac{1}{|\mathcal{\mcs}_h(x)|} \bigg| \left\{ \mfm \in \mathcal{\mcs}_h(x) \ : N(\mfm) \geq 3, \ \ \frac{\omega(\mfm) - \log \log N(\mfm)}{\sqrt{\log \log N(\mfm)}} \leq a \right\} \bigg| = \Phi(a).$$
\end{thm}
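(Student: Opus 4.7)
The approach is to apply \thmref{less-general} with $\mcs = \mcs_h$ and $f$ the identity map on $\mcs_h$. The first hypothesis in \thmref{less-general} is immediate: \eqref{hfreeidealcount} together with \rmkref{remark1} gives $|\mcs_h(x)| = C_1 x + O(x^\tau)$ with $\beta = 1$, $C_1 = \kappa/\zeta_\mcm(h)$, and $\xi = \tau < 1$. The real work is verifying the sieve-type estimate \eqref{npx2A} for each fixed prime $\mfp \in \mcp$, where here $\mcs_\mfp(x) = \mcs_{h,\mfp}(x) := \{\mfm \in \mcs_h(x) : n_\mfp(\mfm) \geq 1\}$.

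The combinatorial input is the unique decomposition $\mfm = s\mfp + \mfm'$ of any $h$-free element, with $s = n_\mfp(\mfm) \in \{0, 1, \ldots, h-1\}$ and $\mfm'$ an $h$-free element coprime to $\mfp$. Writing $\mcs_h^{(\mfp)}$ for the set of $h$-free elements coprime to $\mfp$ and $\mcs_h^{(\mfp)}(y)$ for its counting function, this yields the pair of identities
$$\big|\mcs_{h,\mfp}(x)\big| = \sum_{s=1}^{h-1} \big|\mcs_h^{(\mfp)}(x/N(\mfp)^s)\big|, \qquad |\mcs_h(y)| = \sum_{s=0}^{h-1} \big|\mcs_h^{(\mfp)}(y/N(\mfp)^s)\big|.$$
Inverting the second identity via the formal reciprocal $(1-X)/(1-X^h) = \sum_{k \geq 0}(X^{hk} - X^{hk+1})$ produces
$$\big|\mcs_h^{(\mfp)}(y)\big| = \sum_{k \geq 0}\Big(|\mcs_h(y/N(\mfp)^{hk})| - |\mcs_h(y/N(\mfp)^{hk+1})|\Big),$$
which is a finite sum since the terms vanish once $y/N(\mfp)^{hk} < 1$. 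Substituting \eqref{hfreeidealcount} and summing the resulting geometric series gives
$$\big|\mcs_h^{(\mfp)}(y)\big| = \frac{C_1(1 - N(\mfp)^{-1})}{1 - N(\mfp)^{-h}}\, y + O\!\big(y^\tau\big),$$
with implicit constant uniform in $\mfp$ via the bound $(1 - N(\mfp)^{-h\tau})^{-1} \leq (1 - 2^{-h\tau})^{-1}$.

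Finally, summing over $s = 1, \ldots, h-1$ and applying the algebraic identity $\frac{1 - N(\mfp)^{-(h-1)}}{1 - N(\mfp)^{-h}} = 1 - \frac{N(\mfp)^{-(h-1)}(1 - N(\mfp)^{-1})}{1 - N(\mfp)^{-h}}$ to peel off the dominant $N(\mfp)^{-1}$ contribution, I obtain
$$\big|\mcs_{h,\mfp}(x)\big| = \frac{C_1\, x}{N(\mfp)} - \frac{C_1(1 - N(\mfp)^{-1})}{1 - N(\mfp)^{-h}}\cdot\frac{x}{N(\mfp)^h} + O\!\left(\frac{x^\tau}{N(\mfp)^\tau}\right),$$
which fits the form \eqref{npx2A} with $\eta = h - 1 \geq 1 > 0$, $\xi = \tau$, and a secondary constant $C'_{\mfp,1}$ uniformly bounded (since $N(\mfp) \geq 2$ forces $|C'_{\mfp,1}| \leq C_1(1 - 2^{-h})^{-1}$). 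An appeal to \thmref{less-general} then delivers the Gaussian limit. The main obstacle is ensuring uniformity of the error in $\mfp$: one must carefully bound the truncated inversion sum once $y/N(\mfp)^{hk}$ drops below unity and control the geometric tails, but these are routine once the bound $(1 - 2^{-h\tau})^{-1}$ is in hand.
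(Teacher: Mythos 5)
Your proposal is correct and follows essentially the same route as the paper: both reduce the theorem to \thmref{less-general} via the decomposition $\mfm = s\mfp + \mfm'$ with $\mfm'$ an $h$-free element coprime to $\mfp$, arriving at the same asymptotic for $|\mcs_{h,\mfp}(x)|$ (your choice $\eta = h-1$ versus the paper's $\eta = 1$ is just a different but equivalent parametrization of \eqref{npx2A}). The only difference is that you rederive the count of $h$-free elements coprime to $\mfp$ from \eqref{hfreeidealcount} by the $(1-X)/(1-X^h)$ inversion, whereas the paper simply cites \lmaref{hfreeidealrestrict}; your inversion is valid and gives the same constant $\frac{1-N(\mfp)^{-1}}{1-N(\mfp)^{-h}}\cdot\frac{\kappa}{\zeta_\mcm(h)}$ with error uniform in $\mfp$.
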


\begin{thm}\label{erdoskacforomegahfull}
Let $\mcp$, $\mcm$, and $X$ satisfy condition \eqref{star}. Let $x \in X$ and $h \geq 2$ be any integer. Let $\mathcal{N}_h(x)$ denote the set of $h$-full elements in $\mcm$ with norm less than or equal to $x$. Then for $a \in \mathbb{R}$, we have
$$\lim_{x \rightarrow \infty} \frac{1}{|\mathcal{N}_h(x)|} \bigg| \left\{ \mfm \in \mathcal{N}_h(x) \ : \ N(\mfm) \geq 3, 
\ \frac{\omega(\mfm) - \log \log N(\mfm)}{\sqrt{\log \log N(\mfm)}} \leq a \right\} \bigg| = \Phi(a).$$
\end{thm}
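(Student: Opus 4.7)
The strategy is to deduce \thmref{erdoskacforomegahfull} as a direct application of \thmref{less-general}, taking $\mcs = \mathcal{N}_h$, $f : \mathcal{N}_h \to \mcm$ the identity map, $\beta = 1/h$, and $\xi = \nu$ with $\nu$ as in \rmkref{remark2}. The first hypothesis of \thmref{less-general} is then immediate from \eqref{hfullideals} and \rmkref{remark2}: one reads off $|\mathcal{N}_h(x)| = \kappa \gamma_h x^{1/h} + O_h(x^{\nu/h})$ with $C_\beta = \kappa \gamma_h$. The heart of the matter is therefore the verification of the second hypothesis, namely a sufficiently precise asymptotic for
$$|\mathcal{N}_{h,\mfp}(x)| := \bigl|\{\mfm \in \mathcal{N}_h(x) \ : \ n_\mfp(\mfm) \geq 1\}\bigr|.$$

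Since every $\mfm \in \mathcal{N}_{h,\mfp}(x)$ is $h$-full and divisible by $\mfp$, one has $n_\mfp(\mfm) \geq h$, so $\mfm$ admits a unique decomposition $\mfm = j\mfp + \mfn$ with $j \geq h$, $n_\mfp(\mfn) = 0$, and $\mfn$ itself $h$-full. Writing $B_\mfp(y)$ for the number of $h$-full elements of norm at most $y$ that are coprime to $\mfp$, this gives
$$|\mathcal{N}_{h,\mfp}(x)| = \sum_{\substack{j \geq h \\ N(\mfp)^j \leq x}} B_\mfp\!\left(\frac{x}{N(\mfp)^j}\right).$$
To estimate $B_\mfp(y)$, I would apply the same axiomatic analysis used to derive \eqref{hfullideals} (see \cite[Chapter 4]{jk2}) to the submonoid generated by $\mcp \setminus \{\mfp\}$; equivalently, its generating Dirichlet series equals $\zeta_{\mathcal{N}_h}(s)/G_\mfp(s)$, where $G_\mfp(s) = 1 + N(\mfp)^{-hs}/(1 - N(\mfp)^{-s})$ is the Euler factor of $\zeta_{\mathcal{N}_h}(s)$ at $\mfp$. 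This yields
$$B_\mfp(y) = \frac{\kappa \gamma_h}{G_\mfp(1/h)}\, y^{1/h} + O_h\bigl(y^{\nu/h}\bigr),$$
with an implied constant that must be made uniform in $\mfp$. Summing over $j \geq h$ and using the geometric identities $\sum_{j \geq h} N(\mfp)^{-j/h} = N(\mfp)^{-1}/(1 - N(\mfp)^{-1/h})$ and $\sum_{j \geq h} N(\mfp)^{-j\nu/h} \ll N(\mfp)^{-\nu}$ produces
$$|\mathcal{N}_{h,\mfp}(x)| = \frac{\kappa \gamma_h\, N(\mfp)^{-1}}{1 - N(\mfp)^{-1/h} + N(\mfp)^{-1}}\, x^{1/h} + O_h\!\left(\frac{x^{\nu/h}}{N(\mfp)^{\nu}}\right).$$

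It remains to convert the leading constant into the form demanded by \thmref{less-general}. Writing $p = N(\mfp)$, the elementary identity
$$\frac{p^{-1}}{1 - p^{-1/h} + p^{-1}} = \frac{1}{p} + \frac{1 - p^{-1+1/h}}{p^{1+1/h}\bigl(1 - p^{-1/h} + p^{-1}\bigr)}$$
exhibits the leading term $1/N(\mfp)$, with the remainder precisely of the shape $C'_{\mfp,h}/N(\mfp)^{1+1/h}$ and the factor $C'_{\mfp,h}$ uniformly bounded as $\mfp$ varies, because $N(\mfp) \geq 2$ ensures $1 - p^{-1/h} + p^{-1} > 0$ and the numerator is $O(1)$. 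This matches the required form with $\eta = 1/h$ and $C_{\mfp,\beta}' = \kappa \gamma_h\, C'_{\mfp,h}$, so both hypotheses of \thmref{less-general} are verified and the conclusion follows.

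The principal obstacle I anticipate is ensuring the uniformity in $\mfp$ of the error $O_h(y^{\nu/h})$ in the estimate for $B_\mfp(y)$, since the constants produced by the generic Tauberian argument typically pick up monoid-dependent factors. A cleaner route that circumvents this subtlety is to invert the convolution identity $|\mathcal{N}_h(y)| = B_\mfp(y) + \sum_{j \geq h} B_\mfp(y/N(\mfp)^j)$ directly, transferring the uniform error from $R_{\mathcal{N}_h}$ into $B_\mfp$ via the geometric $\mfp$-adic decay on the right-hand side.
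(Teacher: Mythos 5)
Your proposal is correct and follows essentially the same route as the paper: both verify the hypotheses of \thmref{less-general} with $\beta=\eta=1/h$, $\xi=\nu$, $C_\beta=\kappa\gamma_h$, by decomposing an $h$-full element divisible by $\mfp$ as $j\mfp+\mfn$ with $j\geq h$ and $\mfn$ an $h$-full element coprime to $\mfp$, summing the resulting geometric series, and splitting the leading constant via the same elementary identity. The count $B_\mfp(y)$ that you propose to re-derive (with its uniformity in $\mfp$) is exactly \lmaref{hfullidealsrestrict} with $r=1$, which the paper simply cites.
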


Next, we show that \thmref{less-general} has more applications in the general context, by taking $f$ to be a non-identity function.

Let $k \geq 1$ be any integer. For each $\mfm \in \mcm$, we define the $\omega_k$-function as
$$\omega_k(\mfm) = \sum_{\substack{\mfp \in \mcp \\ n_\mfp(\mfm) = k}} 1,$$
the number of elements of $\mcp$ that generates $\mfm$, with multiplicity $k$.

For an element $\mfm \in \mcm$, let $\mfm_k$ be defined as
\begin{equation}\label{mk}
    \mfm_k = k \cdot \sum_{\substack{\mfp \\ n_\mfp(\mfm) = k}} \mfp.
\end{equation}
Thus
$$\mfm = \sum_{k \geq 1} \mfm_k.$$
We define the map $f_k : \mcs \rightarrow \mcm$ as
$$f_k(\mfm) = \mfm_k.$$
Note that, with the above definition, we have
$$\omega(f_k(\mfm)) = \omega(\mfm_k) = \sum_{\substack{\mfp \\ n_\mfp(\mfm_k) \geq 1}} 1 = \sum_{\substack{\mfp \\ n_\mfp(\mfm) = k}  } 1 = \omega_k(\mfm).$$

As more applications of \thmref{less-general}, we deduce that $\omega_1(\mfm)$ satisfies the Erd\H{o}s-Kac theorems over the subset of $h$-free elements, where $h \geq 2$. We prove:


\begin{thm}\label{erdoskacforomega1}
Let $\mcp$, $\mcm$, and $X$ satisfy condition \eqref{star}. Let $x \in X$ and $h \geq 2$ be any integer. Let $\mathcal{\mcs}_h(x)$ denote the set of $h$-free elements in $\mcm$ with norm less than or equal to $x$. Then for $a \in \mathbb{R}$, we have
$$\lim_{x \rightarrow \infty} \frac{1}{|\mathcal{\mcs}_h(x)|} \bigg| \left\{ \mfm \in \mathcal{\mcs}_h(x) \ : N(\mfm) \geq 3, \ \ \frac{\omega_1(\mfm) - \log \log N(\mfm)}{\sqrt{\log \log N(\mfm)}} \leq a \right\} \bigg| = \Phi(a).$$
\end{thm}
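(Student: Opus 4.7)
The plan is to apply \thmref{less-general} with $\mcs = \mcs_h$ and $f = f_1$, where $f_1(\mfm) = \mfm_1 = \sum_{\mfp : n_\mfp(\mfm) = 1} \mfp$ as in \eqref{mk}. The key identity is that $n_\mfp(f_1(\mfm)) \geq 1$ if and only if $n_\mfp(\mfm) = 1$; in particular $\omega(f_1(\mfm)) = \omega_1(\mfm)$, and the set $\mcs_\mfp(x)$ in \thmref{less-general} is precisely $\{\mfm \in \mcs_h(x) : n_\mfp(\mfm) = 1\}$. The first hypothesis $|\mcs_h(x)| = \frac{\kappa}{\zeta_\mcm(h)} x + O_h(x^\tau)$ for some $\tau < 1$ is immediate from \eqref{hfreeidealcount} and \rmkref{remark1}, so it remains to verify the refined asymptotic \eqref{npx2A}.

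Since $h \geq 2$, every $\mfm \in \mcs_\mfp(x)$ decomposes uniquely as $\mfm = \mfp + \mfm^*$ where $\mfm^*$ is $h$-free, $n_\mfp(\mfm^*) = 0$, and $N(\mfm^*) \leq x/N(\mfp)$. Using the M\"obius identity $\mathbf{1}_{\{\mfm^*\text{ is }h\text{-free}\}} = \sum_{h\mfd \mid \mfm^*} \mu(\mfd)$ and swapping the order of summation, I rewrite
\begin{equation*}
|\mcs_\mfp(x)| = \sum_{\substack{\mfd \in \mcm,\ \mu(\mfd) \neq 0 \\ n_\mfp(\mfd) = 0,\ N(\mfd)^h \leq x/N(\mfp)}} \mu(\mfd)\, G_\mfp\!\left(\frac{x}{N(\mfp)\, N(\mfd)^h}\right),
\end{equation*}
where $G_\mfp(z) := |\{\mathfrak{e} \in \mcm : n_\mfp(\mathfrak{e}) = 0,\ N(\mathfrak{e}) \leq z\}|$. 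A telescoping identity $\mcm(z) = \sum_{j \geq 0} G_\mfp(z/N(\mfp)^j)$ combined with \eqref{star} gives $G_\mfp(z) = \mcm(z) - \mcm(z/N(\mfp)) = \kappa(1 - N(\mfp)^{-1}) z + O(z^\theta)$ uniformly in $\mfp$. Substituting this expansion, extending the $\mfd$-sum to all squarefree $\mfd$ with $n_\mfp(\mfd)=0$ via $\prod_{\mfq \neq \mfp}(1 - N(\mfq)^{-h}) = [\zeta_\mcm(h)(1 - N(\mfp)^{-h})]^{-1}$, and controlling the M\"obius tail $\sum_{N(\mfd) > (x/N(\mfp))^{1/h}} N(\mfd)^{-h}$ and the $O(z^\theta)$ contribution by routine partial summation, I arrive at
\[
|\mcs_\mfp(x)| = \frac{\kappa x}{\zeta_\mcm(h) N(\mfp)} \cdot \frac{1 - N(\mfp)^{-1}}{1 - N(\mfp)^{-h}} + O\!\left(\frac{x^\tau}{N(\mfp)^\tau}\right).
\]

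Finally, the algebraic identity $\frac{1 - N(\mfp)^{-1}}{1 - N(\mfp)^{-h}} = 1 - \frac{1}{N(\mfp)} \cdot \frac{1 - N(\mfp)^{-(h-1)}}{1 - N(\mfp)^{-h}}$ separates $|\mcs_\mfp(x)|$ into $\frac{C_1\, x}{N(\mfp)} + \frac{C'_{\mfp,1}\, x}{N(\mfp)^{2}} + O(x^\tau/N(\mfp)^\tau)$ with $C_1 = \kappa/\zeta_\mcm(h)$ and $C'_{\mfp,1}$ uniformly bounded in $\mfp$ (since $N(\mfp) \geq 2$); this matches \eqref{npx2A} with $\beta = 1$, $\eta = 1$, and $\xi = \tau$, and \thmref{less-general} then yields the Gaussian limit. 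The main technical subtlety is keeping all implied constants independent of $\mfp$, especially when bounding $\sum_{N(\mfd) \leq (x/N(\mfp))^{1/h}} N(\mfd)^{-h\theta}$ across the three regimes $h\theta > 1$, $h\theta = 1$, and $h\theta < 1$; in each case partial summation using $\mcm(z) \ll z$ produces an error of order at worst $(x/N(\mfp))^{\max(\theta,\, 1/h)}$ up to a logarithm absorbed into a slightly larger $\tau$.
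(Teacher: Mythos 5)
Your proposal is correct and follows essentially the same route as the paper: apply \thmref{less-general} with $f=f_1$, identify $\mcs_\mfp(x)$ with the $h$-free elements coprime to $\mfp$ of norm at most $x/N(\mfp)$, and extract the expansion \eqref{npx2A} with $\beta=\eta=1$, $\xi=\tau$, $C_\beta=\kappa/\zeta_\mcm(h)$ and the same $C'_{\mfp,\beta}=-C_\beta\,\frac{N(\mfp)^h-N(\mfp)}{N(\mfp)^h-1}$. The only difference is that where the paper simply invokes \lmaref{hfreeidealrestrict} for the restricted $h$-free count, you re-derive that count from \eqref{star} via M\"obius inversion over the monoid (which has the minor benefit of making uniformity in $\mfp$ explicit), so the two arguments are otherwise identical.
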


In the following, for convenience, we use $\mathcal{N}_1 = \mcm$ and $\mathcal{N}_1(x)$ to denote the set of elements in $\mcm$ with norm less than or equal to $x$. We call $\mathcal{N}_1$ the set of $1$-full elements. To encompass the set of $h$-full elements defined above and $\mcm$ under one notation, we use $\mathcal{N}_k$ where $k \geq 1$ to denote the set of $k$-full elements. We show that $\omega_k(\mfm)$ satisfies the Erd\H{o}s-Kac theorem over the subset of $k$-full elements in the following:

\begin{thm}\label{erdoskacforomegah}
Let $\mcp$, $\mcm$, and $X$ satisfy condition \eqref{star}. Let $x \in X$ and $k \geq 1$ be any integer. Let $\mathcal{N}_k(x)$ denote the set of $k$-full elements in $\mcm$ with norm less than or equal to $x$. Then for $a \in \mathbb{R}$, we have
$$\lim_{x \rightarrow \infty} \frac{1}{|\mathcal{N}_k(x)|} \bigg| \left\{ \mfm \in \mathcal{N}_k(x) \ : \ N(\mfm) \geq 3, 
\ \frac{\omega_k(\mfm) - \log \log N(\mfm)}{\sqrt{\log \log N(\mfm)}} \leq a \right\} \bigg| = \Phi(a).$$
\end{thm}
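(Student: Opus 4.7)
My plan is to apply \thmref{less-general} with $\mcs = \mathcal{N}_k$ and $f = f_k$, where $f_k$ is the map constructed just before the theorem so that $\omega(f_k(\mfm)) = \omega_k(\mfm)$. With this choice, \thmref{erdoskacforomegah} will be an immediate consequence of \thmref{less-general} once its two size hypotheses are checked. The first hypothesis is the asymptotic $|\mathcal{N}_k(x)| = C_\beta x^\beta + O_\beta(x^{\xi\beta})$. When $k = 1$, $\mathcal{N}_1 = \mcm$ and condition~\eqref{star} provides this with $\beta = 1$, $C_\beta = \kappa$, $\xi = \theta$. When $k \geq 2$, \eqref{hfullideals} together with \rmkref{remark2} yields $|\mathcal{N}_k(x)| = \kappa \gamma_{\scaleto{k}{4.5pt}} x^{1/k} + O(x^{\nu/k})$, so $\beta = 1/k$, $C_\beta = \kappa \gamma_{\scaleto{k}{4.5pt}}$, $\xi = \nu$.

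The substantive step is verifying \eqref{npx2A} for
\[
\mathcal{S}_\mfp(x) = \bigl\{ \mfm \in \mathcal{N}_k(x) : n_\mfp(f_k(\mfm)) \geq 1 \bigr\} = \bigl\{ \mfm \in \mathcal{N}_k(x) : n_\mfp(\mfm) = k \bigr\}.
\]
The translation $\mfm \mapsto \mfm - k\mfp$ is a bijection from $\mathcal{S}_\mfp(x)$ onto the set $\mathcal{N}_k^{(\mfp)}(x/N(\mfp)^k)$ of $k$-full elements coprime to $\mfp$ with norm at most $x/N(\mfp)^k$. I would count this coprime set via the Dirichlet series factorization
\[
\sum_{\substack{\mfm \in \mathcal{N}_k \\ n_\mfp(\mfm) = 0}} N(\mfm)^{-s} = \zeta_{\mathcal{N}_k}(s) \cdot \frac{1 - N(\mfp)^{-s}}{1 - N(\mfp)^{-s} + N(\mfp)^{-ks}},
\]
obtained by removing the Euler factor at $\mfp$ and reinserting only its $n_\mfp = 0$ contribution.

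For $k = 1$ the correction factor simplifies to $1 - N(\mfp)^{-s}$, so two applications of \eqref{star} give
\[
|\mathcal{S}_\mfp(x)| = \mcm\!\left(\tfrac{x}{N(\mfp)}\right) - \mcm\!\left(\tfrac{x}{N(\mfp)^2}\right) = \frac{\kappa x}{N(\mfp)} - \frac{\kappa x}{N(\mfp)^2} + O\!\left(\frac{x^\theta}{N(\mfp)^\theta}\right),
\]
which matches \eqref{npx2A} with $C'_{\mfp,\beta} = -\kappa$ and $\eta = 1$. For $k \geq 2$, setting $u = N(\mfp)^{-1/k}$ and expanding gives
\[
\frac{1-u}{1-u+u^k} = 1 - \frac{u^k}{1-u} + O(u^{2k}) = 1 - N(\mfp)^{-1} + O\!\left(N(\mfp)^{-1-1/k}\right),
\]
and a Perron-type analysis modeled on the proof of \eqref{hfullideals} in \cite{dkl5} should then yield
\[
|\mathcal{N}_k^{(\mfp)}(y)| = \kappa \gamma_{\scaleto{k}{4.5pt}} \bigl(1 - N(\mfp)^{-1} + O(N(\mfp)^{-1-1/k})\bigr) y^{1/k} + O(y^{\nu/k}).
\]
Specializing to $y = x/N(\mfp)^k$ and absorbing the subleading terms into $C'_{\mfp,\beta}$ puts this in the shape \eqref{npx2A} with $\eta = 1$ and $C'_{\mfp,\beta}$ uniformly bounded in $\mfp$.

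The main obstacle I foresee is securing the remainder $O(y^{\nu/k})$ \emph{uniformly in} $\mfp$. I expect this to follow from the fact that the correction factor $(1 - N(\mfp)^{-s})/(1 - N(\mfp)^{-s} + N(\mfp)^{-ks})$ is analytic in a half-plane $\Re s > 0$ and bounded in modulus by an absolute constant on any fixed vertical line to the left of $\Re s = 1/k$, independent of $\mfp$; consequently shifting the contour past the pole of $\zeta_{\mathcal{N}_k}$ at $s = 1/k$ yields the same error as in the unperturbed count \eqref{hfullideals}, with no deterioration in $\mfp$. Once \eqref{npx2A} is established, \thmref{less-general} delivers the Gaussian limit asserted in \thmref{erdoskacforomegah}.
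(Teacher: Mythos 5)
Your proposal follows essentially the same route as the paper's proof: apply \thmref{less-general} with $\mcs=\mathcal{N}_k$ and $f=f_k$, handle $k=1$ by two applications of \eqref{star}, and for $k\geq 2$ reduce $|\mathcal{S}_\mfp(x)|$ via the shift $\mfm\mapsto \mfm-k\mfp$ to counting $k$-full elements coprime to $\mfp$ of norm at most $x/N(\mfp)^k$. The only divergence is that you propose to re-derive that coprime count by a Dirichlet-series/Perron argument, whereas the paper simply invokes \lmaref{hfullidealsrestrict} (\cite[Lemma 4.2]{dkl5}), which already gives the main term $\kappa\gamma_{k}\left(1+\frac{N(\mfp)^{-1}}{1-N(\mfp)^{-1/k}}\right)^{-1}y^{1/k}$ (matching your expansion) together with an error $O_k\big(R_{\mathcal{N}_k}(y)\big)$ uniform in $\mfp$, so the uniformity obstacle you flag is already dispatched and the contour analysis is unnecessary.
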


We extend our study to the generalized $\omega$-function related to a sequence, inspired by the work of Elma and Martin \cite{elmamartin} over natural numbers. Let $\mcA = (a_1,a_2,\ldots)$ be a sequence of complex numbers, and for some integer $k \geq 1$, the following property is satisfied:
\begin{equation}\label{Condition-ai}
  \sum_{\substack{\mfp \in \mcp \\ N(\mfp) \leq x^{1/k}}} \sum_{i \geq k+1} \frac{|a_i|}{N(\mfp)^{i/k}} = O(1).  
\end{equation}
The above relation produces a growth condition on $a_i$'s depending on the minimum value of the norm map. In particular, if we assume that $N(\mfp) \geq b$ for all $\mfp \in \mcp$, then a sufficient condition for \eqref{Condition-ai} to hold is

$$a_i \ll B^i \quad \text{ as } i \rightarrow \infty, \quad \text{ where } \quad 0 < B \leq b^{\frac{1}{k} - \alpha} \quad \text{ for some } \alpha > 0.$$

For this article, we shall assume $b = 2$, i.e., the minimum possible value of the norm of a prime element.

Let the generalized $\omega$-function, $\omega_\mcA$ attached to the sequence $\mcA$ be defined as

$$\omega_\mcA (\mfm) = \sum_{k \geq 1} a_k \ \omega(f_k(\mfm)) = \sum_{k \geq 1} a_k \ \omega_k(\mfm),$$
where the sum is finite for each $\mfm$. 

With the above definitions and restrictions, for an integer $k \geq 1$, we prove the following Erd\H{o}s-Kac theorem for $\omega_\mcA(\mfm)$ over $k$-full elements in $\mcm$.  
\begin{thm}\label{erdoskacforomegaA}
Let $\mcp$, $\mcm$, and $X$ satisfy condition \eqref{star}. Let $x \in X$. Let $\mcA = (a_1,a_2,\ldots)$ be a sequence of complex numbers. Let $k \in \mathbb{Z}_{>0}$ be such that $a_k \neq 0$ and the following property holds:
$$a_i \ll B^i \quad \text{ as } i \rightarrow \infty, \quad \text{ where } \quad 0 < B \leq 2^{\frac{1}{k} - \alpha} \quad \text{ for some } \alpha > 0.$$
Let $\mathcal{N}_k(x)$ denote the set of $k$-full elements in $\mcm$ with norm less than or equal to $x$. 
Then for $a \in \mathbb{R}$, we have
$$\lim_{x \rightarrow \infty} \frac{1}{|\mathcal{N}_k(x)|} \bigg| \left\{ \mfm \in \mathcal{N}_k(x) \ : \ N(\mfm) \geq 3, 
\ \frac{\frac{1}{a_k} \omega_\mcA(\mfm) - \log \log N(\mfm)}{\sqrt{\log \log N(\mfm)}} \leq a \right\} \bigg| = \Phi(a).$$
\end{thm}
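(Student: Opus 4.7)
The strategy I will follow is to decompose $\omega_\mcA(\mfm)$ into its ``main'' piece $a_k \omega_k(\mfm)$, coming from prime elements of multiplicity exactly $k$, together with a tail $E(\mfm) := \sum_{i \geq k+1} a_i\, \omega_i(\mfm)$ coming from higher multiplicities, and then to show that the tail is negligible on a density-one subset of $\mathcal{N}_k(x)$. Writing
\[
\frac{1}{a_k}\omega_\mcA(\mfm) = \omega_k(\mfm) + \frac{1}{a_k} E(\mfm),
\]
the Erd\H{o}s--Kac behaviour of $\omega_k$ on $\mathcal{N}_k(x)$ is already provided by \thmref{erdoskacforomegah}. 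Thus by a Slutsky-type argument, the conclusion for $\tfrac{1}{a_k}\omega_\mcA$ will follow once I show that, for every $\varepsilon > 0$,
\[
\frac{1}{|\mathcal{N}_k(x)|}\bigl|\bigl\{\mfm \in \mathcal{N}_k(x) \ : \ |E(\mfm)| > \varepsilon\, \sqrt{\log \log N(\mfm)}\bigr\}\bigr| \longrightarrow 0 \quad \text{as } x \to \infty.
\]

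The key step will be a first-moment bound of the form $\sum_{\mfm \in \mathcal{N}_k(x)} |E(\mfm)| \ll x^{1/k}$. I plan to obtain it by interchanging the order of summation: for each prime $\mfp$ and each $i \geq k+1$, any $\mfm \in \mathcal{N}_k(x)$ with $n_\mfp(\mfm) = i$ can be written uniquely as $\mfm = i\mfp + \mfn$ with $\mfn \in \mathcal{N}_k$, $n_\mfp(\mfn) = 0$, and $N(\mfn) \leq x/N(\mfp)^i$. Using \eqref{hfullideals} when $k \geq 2$ and \eqref{star} when $k = 1$, the number of such $\mfm$ is $O\bigl(x^{1/k}/N(\mfp)^{i/k}\bigr)$. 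Summing in $i$ and $\mfp$ gives
\[
\sum_{\mfm \in \mathcal{N}_k(x)} |E(\mfm)| \ll x^{1/k} \sum_{\substack{\mfp \\ N(\mfp) \leq x^{1/k}}} \sum_{i \geq k+1} \frac{|a_i|}{N(\mfp)^{i/k}} \ll x^{1/k},
\]
where the last inequality is exactly hypothesis \eqref{Condition-ai}. The given bound $|a_i| \ll B^i$ with $B \leq 2^{1/k-\alpha}$ turns the inner sum into a geometric series with ratio at most $B/N(\mfp)^{1/k} \leq 2^{-\alpha} < 1$ (since $N(\mfp) \geq 2$), so \eqref{Condition-ai} holds uniformly in $x$.

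With the first-moment bound in hand, I split $\mathcal{N}_k(x)$ into $\{\mfm : N(\mfm) < x^{1/2}\}$, whose cardinality is $\ll x^{1/(2k)} = o(|\mathcal{N}_k(x)|)$ and can be discarded, and its complement, on which $\sqrt{\log \log N(\mfm)} \gg \sqrt{\log \log x}$. On the complement, Markov's inequality converts the first-moment bound into an exceptional-set estimate of size $O\bigl(x^{1/k}/\sqrt{\log \log x}\bigr) = o(|\mathcal{N}_k(x)|)$, yielding the desired convergence in density and completing the reduction to \thmref{erdoskacforomegah}.

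The main obstacle I anticipate lies in the interchange and uniform-convergence step: I must verify that the $k$-full constraint is precisely what produces the exponent $i/k$ in the denominator (rather than $i$, which would force a stronger decay on $a_i$), and that this is exactly compensated by the stated growth condition $B \leq 2^{1/k-\alpha}$. Careful bookkeeping when invoking \eqref{hfullideals} or \eqref{star} with parameter $y = x/N(\mfp)^i$, and ensuring that the implied constants there are uniform in $y$ across the whole range, is the technical heart of the argument.
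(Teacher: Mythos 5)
Your proposal is correct and follows essentially the same route as the paper: both decompose $\tfrac{1}{a_k}\omega_\mcA = \omega_k + \tfrac{1}{a_k}\sum_{i\geq k+1}a_i\omega_i$ on $k$-full elements, establish the first-moment bound $\sum_{\mfm\in\mathcal{N}_k(x)}|E(\mfm)|\ll x^{1/k}$ by the same interchange of summation with the count $\ll x^{1/k}/N(\mfp)^{i/k}$ and condition \eqref{Condition-ai}, and then transfer the Gaussian law from \thmref{erdoskacforomegah} via a Markov/exceptional-set argument. The only cosmetic differences are your cutoff $N(\mfm)<x^{1/2}$ in place of the paper's $N(\mfm)<x/\log x$, and your appeal to a Slutsky-type step where the paper writes out the explicit $\Phi(a-\epsilon)\leq\liminf\leq\limsup\leq\Phi(a+\epsilon)$ sandwich.
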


In the next result, we state the Erd\H{o}s-Kac theorem for $\frac{1}{a_1} \omega_\mcA(\mfm)$ over $h$-free elements. The set of $h$-free elements in $\mcm$ has a positive density and thus the proof of the theorem follows similarly to the case of $1$-full elements in \thmref{erdoskacforomegaA}. To avoid repetition, we don't provide the proof in this article.

\begin{thm}\label{erdoskacforomegaA-hfree}
Let $\mcp$, $\mcm$, and $X$ satisfy condition \eqref{star}. Let $x \in X$. Let $h \geq 2$ be any integer and let $\mathcal{S}_h(x)$ denote the set of $h$-free elements in $\mcm$ with norm less than or equal to $x$. Let $\mcA = (a_1,a_2,\ldots)$ be a sequence of complex numbers, satisfying $a_1 \neq 0$ and
$$a_i \ll B^i \quad \text{ as } i \rightarrow \infty, \quad \text{ where } \quad 0 < B \leq 2^{1 - \alpha} \quad \text{ for some } \alpha > 0.$$
Then for $a \in \mathbb{R}$, we have
$$\lim_{x \rightarrow \infty} \frac{1}{|\mathcal{S}_h(x)|} \bigg| \left\{ \mfm \in \mathcal{S}_h(x) \ : \ N(\mfm) \geq 3, 
\ \frac{\frac{1}{a_1} \omega_\mcA(\mfm) - \log \log N(\mfm)}{\sqrt{\log \log N(\mfm)}} \leq a \right\} \bigg| = \Phi(a).$$
\end{thm}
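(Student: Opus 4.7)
My plan is to reduce the theorem to the Erd\H{o}s-Kac law for $\omega_1$ over $h$-free elements already furnished by \thmref{erdoskacforomega1}. The key simplification specific to the $h$-free setting is that for every $\mfm \in \mcs_h$ the constraint $n_\mfp(\mfm) \leq h - 1$ kills $\omega_k(\mfm)$ for every $k \geq h$, so the defining series for $\omega_\mcA$ collapses to the finite expression
$$\omega_\mcA(\mfm) \;=\; \sum_{k=1}^{h-1} a_k \, \omega_k(\mfm) \;=\; a_1 \, \omega_1(\mfm) \;+\; \sum_{k=2}^{h-1} a_k \, \omega_k(\mfm),$$
and the growth hypothesis on the $a_i$ with $i \geq h$ becomes bookkeeping. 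The strategy is therefore to show that the correction $\sum_{k=2}^{h-1} (a_k/a_1)\,\omega_k(\mfm)$ is $o(\sqrt{\log \log N(\mfm)})$ on a density-one subset of $\mcs_h(x)$ and then invoke a Slutsky-type argument to transport the Gaussian limit of $\omega_1$ to $\omega_\mcA/a_1$.

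First I would apply \thmref{erdoskacforomega1} to obtain the Gaussian law for $(\omega_1(\mfm) - \log \log N(\mfm))/\sqrt{\log \log N(\mfm)}$ over $\mcs_h(x)$. Next, for each fixed $k$ with $2 \leq k \leq h-1$, I would estimate the first two moments of $\omega_k$ on $\mcs_h(x)$. Unfolding gives
$$\sum_{\mfm \in \mcs_h(x)} \omega_k(\mfm) \;=\; \sum_{\mfp} \bigl|\{\mfm \in \mcs_h(x) : n_\mfp(\mfm) = k\}\bigr|,$$
and inclusion-exclusion against the $h$-free indicator, combined with \eqref{hfreeidealcount}, produces for each $\mfp$ with $N(\mfp)^k \leq x$ an asymptotic of shape $|\mcs_h(x)|/N(\mfp)^k$ plus an admissible error, and a trivially vanishing count otherwise. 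Summing and using that $\sum_\mfp N(\mfp)^{-k}$ converges for every $k \geq 2$ (a consequence of the abstract prime number theorem deducible from \eqref{star}) yields $\sum_\mfm \omega_k(\mfm) \ll_h |\mcs_h(x)|$, and an analogous bilinear computation gives the same bound for the second moment. Chebyshev's inequality then forces $\omega_k(\mfm) = o(\sqrt{\log \log N(\mfm)})$ outside a subset of $\mcs_h(x)$ of vanishing density; summing over the finitely many values of $k$ with bounded weights $|a_k/a_1|$ supplies the required negligibility, and combining with the Gaussian law for $\omega_1$ finishes the argument.

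The principal obstacle is the uniform control in $\mfp$ of the count $|\{\mfm \in \mcs_h(x) : n_\mfp(\mfm) = k\}|$: the inclusion-exclusion produces a cascade of error terms of shape $R_{\mcs_h}(x)/N(\mfp)^{j}$ for several values of $j$, which must be summed over $\mfp$ and shown to contribute $o(|\mcs_h(x)|)$ in aggregate. This is essentially the Tur\'an-Kubilius style estimate already used in \cite{dkl5} to establish the normal order of $\omega$ over $h$-free elements, and once it is in place the Slutsky reduction is routine. By contrast, in the $1$-full case treated in \thmref{erdoskacforomegaA} one additionally needs the growth condition on $a_i$ to dominate an infinite tail of $\omega_j$'s; that complication is absent here, which is precisely the reason the authors elected to omit the proof.
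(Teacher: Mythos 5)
Your proposal is correct and matches the proof the paper intends (and omits): the paper explicitly defers to the argument for \thmref{erdoskacforomegaA}, which is exactly your scheme of taking the Gaussian law for $\omega_1$ from \thmref{erdoskacforomega1}, showing via a first-moment (Markov) bound that the correction $\sum_{k\geq 2}(a_k/a_1)\,\omega_k(\mfm)$ is $o(\sqrt{\log\log N(\mfm)})$ off a density-zero set, and finishing with the $\Phi(a\pm\epsilon)$ sandwich. Your observation that the sum truncates at $k=h-1$ for $h$-free elements is a genuine (minor) simplification over the $1$-full case, and the second-moment/Chebyshev step you mention is unnecessary since the nonnegativity of $\omega_k$ lets the first moment do all the work, as in the paper's proof of \thmref{erdoskacforomegaA}.
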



We provide applications of \thmref{erdoskacforomegaA} to other well-known functions, in particular, the prime counting $\Omega$-function and the divisor counting function. Note that our method provides a novel approach to study the prime divisor counting functions, i.e., the $\omega$-function and the $\Omega$-function, and the divisor counting function in a single framework. As these functions are of prime importance to the various mathematicians, our work contributes significantly to the literature.

For each $\mfm \in \mcm$, we define
$$\Omega(\mfm) = \sum_{\substack{\mfp \in \mcp \\ n_\mfp(\mfm) \geq 1}} n_\mfp(\mfm),$$
the number of elements of $\mcp$ that generates $\mfm$, counted with multiplicity. We say $\mfd \in \mcm$ is a divisor of $\mfm$ and denote it as $\mfd | \mfm$ if $n_\mfp(\mfd) \leq n_\mfp(\mfm)$ for all $\mfp \in \mcp$. We define
$$d(\mfm) = \sum_{\substack{\mfd \in \mfm \\ \mfd | \mfm}} 1,$$
the number of divisors of $\mfm$. When $\mcm = \mathbb{Z}_{>0}$, $\Omega(n)$ counts the total number of primes diving the natural number $n$, and $d(n)$ counts the total number of divisors of $n$.

Notice that
$$\Omega(\mfm) = \sum_{k \geq 1} k \left(\sum_{\substack{\mfp \in \mcp \\ n_\mfp(\mfm) = k}} 1 \right) = \sum_{k \geq 1} k \cdot \omega_k(\mfm).$$
For $\mcA = (1,2,3,\ldots,n,\ldots)$ and for any integer $k \geq 1$, we have $a_i = i \ll 2^{i/(2k)}$ as $i \rightarrow \infty$, satisfying the hypothesis of \thmref{erdoskacforomegaA}. Thus, we obtain the following corollary, called the Erd\H{o}s-Kac theorem for $\Omega(\mfm)$ over $k$-full elements:

\begin{cor}
Let $\mcp$, $\mcm$, and $X$ satisfy condition \eqref{star}. Let $x \in X$ and $k \geq 1$ be any integer. Let $\mathcal{N}_k(x)$ denote the set of $k$-full elements in $\mcm$ with norm less than or equal to $x$. Then for $a \in \mathbb{R}$, we have
$$\lim_{x \rightarrow \infty} \frac{1}{|\mathcal{N}_k(x)|} \bigg| \left\{ \mfm \in \mathcal{N}_k(x) \ : \ N(\mfm) \geq 3, 
\ \frac{\frac{1}{k} \Omega(\mfm) - \log \log N(\mfm)}{\sqrt{\log \log N(\mfm)}} \leq a \right\} \bigg| = \Phi(a).$$
\end{cor}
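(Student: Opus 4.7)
The plan is to recognize that this corollary is just a direct specialization of \thmref{erdoskacforomegaA} to the specific sequence $\mcA = (1,2,3,\ldots,n,\ldots)$, followed by a verification that the hypotheses of that theorem are satisfied. No new analytic machinery should be needed.

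First I would fix an integer $k \geq 1$ and set $a_i = i$ for all $i \geq 1$. The identity
\[
\Omega(\mfm) = \sum_{\substack{\mfp \in \mcp \\ n_\mfp(\mfm) \geq 1}} n_\mfp(\mfm) = \sum_{i \geq 1} i \cdot \omega_i(\mfm)
\]
is already noted in the excerpt, so under this choice of $\mcA$ we have exactly $\omega_\mcA(\mfm) = \Omega(\mfm)$. In particular $a_k = k \neq 0$ and $\frac{1}{a_k}\omega_\mcA(\mfm) = \frac{1}{k}\Omega(\mfm)$, which matches the normalization appearing inside the corollary's limit.

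Next I would verify the growth hypothesis of \thmref{erdoskacforomegaA}, namely that $a_i \ll B^i$ for some $B \in (0, 2^{1/k - \alpha}]$ with $\alpha > 0$. Choose $\alpha = 1/(2k)$ and $B = 2^{1/(2k)}$; then $B \leq 2^{1/k - \alpha}$ holds with equality, and since any polynomial is dominated by any exponential of base $>1$, we have $a_i = i \ll 2^{i/(2k)} = B^i$ as $i \to \infty$. This is the only nontrivial hypothesis to check, and it is immediate.

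With both requirements of \thmref{erdoskacforomegaA} in place, I would simply invoke that theorem to conclude that
\[
\lim_{x \to \infty} \frac{1}{|\mathcal{N}_k(x)|} \left| \left\{ \mfm \in \mathcal{N}_k(x) : N(\mfm) \geq 3,\ \frac{\tfrac{1}{k}\Omega(\mfm) - \log\log N(\mfm)}{\sqrt{\log\log N(\mfm)}} \leq a \right\} \right| = \Phi(a),
\]
which is the claimed statement. Since the verification is essentially a one-line check, the only place a subtlety could arise is making sure the normalization constant and the condition $a_k \neq 0$ are correctly aligned for the chosen $k$; the uniform choice $a_i = i$ works for every $k \geq 1$ simultaneously, so there is no obstacle here.
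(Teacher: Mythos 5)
Your proposal is correct and matches the paper's own justification essentially verbatim: the paper also takes $\mcA = (1,2,3,\ldots)$, uses $\Omega(\mfm) = \sum_{i \geq 1} i\,\omega_i(\mfm)$, checks $a_i = i \ll 2^{i/(2k)}$, and invokes \thmref{erdoskacforomegaA}. Nothing further is needed.
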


For the $d(\mfm)$ function, one can easily deduce that
$$d(\mfm) = \sum_{\substack{\mfd \in \mfm \\ \mfd | \mfm}} 1 = \prod_{\substack{\mfp \\ n_\mfp(\mfm) \geq 1}} (n_\mfp(\mfm) + 1).$$
Thus,
$$\log d(\mfm) = \sum\limits_{\substack{\mfp \\ n_\mfp(\mfm) \geq 1}} \log (n_\mfp(\mfm) + 1) = \sum_{k \geq 1} \log(k+1) \left(\sum\limits_{\substack{\mfp \\ n_\mfp(\mfm) = k}} 1 \right) = \sum_{k \geq 1} \log(k+1) \cdot \omega_k(\mfm).$$

Taking $\mcA = (\log 2, \log 3, \log 4, \ldots, \log(n+1), \ldots)$ and for any integer $k \geq 1$, we have $a_i = \log(i+1) \ll 2^{i/(2k)}$ as $i \rightarrow \infty$, satisfying the hypothesis of \thmref{erdoskacforomegaA}. Thus, we obtain the following corollary, called the Erd\H{o}s-Kac theorem for $\log d(\mfm)$ over $k$-full elements:

\begin{cor}
    Let $\mcp$, $\mcm$, and $X$ satisfy condition \eqref{star}. Let $x \in X$ and $k \geq 1$ be any integer. Let $\mathcal{N}_k(x)$ denote the set of $k$-full elements in $\mcm$ with norm less than or equal to $x$. Then for $a \in \mathbb{R}$, we have
$$\lim_{x \rightarrow \infty} \frac{1}{|\mathcal{N}_k(x)|} \bigg| \left\{ \mfm \in \mathcal{N}_k(x) \ : \ N(\mfm) \geq 3, 
\ \frac{\frac{1}{\log (k+1)} \log d(\mfm) - \log \log N(\mfm)}{\sqrt{\log \log N(\mfm)}} \leq a \right\} \bigg| = \Phi(a).$$
\end{cor}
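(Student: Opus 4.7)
The plan is to recognize $\log d(\mfm)$ as an instance of the generalized $\omega$-function $\omega_{\mcA}$ for the sequence $\mcA=(\log 2, \log 3, \log 4, \ldots)$ and then invoke \thmref{erdoskacforomegaA}. The preceding discussion already records the factorization
$$\log d(\mfm) \;=\; \sum_{j \geq 1} \log(j+1)\,\omega_j(\mfm),$$
which, upon setting $a_j=\log(j+1)$, is precisely $\omega_{\mcA}(\mfm)$.

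Next I would verify the two hypotheses of \thmref{erdoskacforomegaA} for this sequence and the given integer $k \geq 1$. First, $a_k=\log(k+1)\neq 0$ since $k+1\geq 2$. Second, one needs the subexponential growth bound $a_i\ll B^i$ with $0<B\leq 2^{1/k-\alpha}$ for some $\alpha>0$. Choosing $B=2^{1/(2k)}$ and $\alpha=1/(2k)$ works: the elementary estimate $\log(i+1)=o(\rho^i)$ for any $\rho>1$ yields $\log(i+1)\ll 2^{i/(2k)}$, while $2^{1/k-1/(2k)}=2^{1/(2k)}=B$ confirms the admissibility of this choice.

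With both hypotheses verified, \thmref{erdoskacforomegaA} applies directly and yields the desired Gaussian law for $\tfrac{1}{\log(k+1)}\log d(\mfm)$ over $\mathcal{N}_k(x)$, which is exactly the statement of the corollary. The main obstacle, in truth, is trivial here: all the substantive analytic machinery is packaged inside \thmref{erdoskacforomegaA}, and the only real checks are the identification $a_j=\log(j+1)$ together with confirming the exponential growth bound for this choice. The corollary is therefore essentially a bookkeeping consequence of the more general result, in complete parallel with the preceding corollary for $\Omega(\mfm)$ obtained via the sequence $(1,2,3,\ldots)$.
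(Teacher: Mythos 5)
Your proposal is correct and matches the paper's own argument exactly: the paper likewise writes $\log d(\mfm)=\sum_{j\geq 1}\log(j+1)\,\omega_j(\mfm)$, takes $\mcA=(\log 2,\log 3,\ldots)$, checks $a_i=\log(i+1)\ll 2^{i/(2k)}$ so the growth hypothesis holds with $\alpha=1/(2k)$, and applies \thmref{erdoskacforomegaA}. Nothing further is needed.
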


Next, we show that \thmref{erdoskacforomegaA} can also be applied to some new functions. 
Let $\omega_T(\mfm)$ denote the difference in the number of prime elements in the factorization of $\mfm$ with odd multiplicity and the number of prime elements in the factorization of $\mfm$ with even multiplicity, i.e.,
\begin{equation}\label{def-omegaT}
    \omega_T(\mfm) = \sum_{\substack{\mfp \\ n_\mfp(\mfm) \ \text{odd}}} 1 - \sum_{\substack{\mfp \\ n_\mfp(\mfm) \ \text{even}}} 1 = \sum_{\mfp} (-1)^{n_\mfp(\mfm) - 1} = \sum_{k \geq 1} (-1)^{k-1} \omega_k(\mfm).
\end{equation}
Taking $\mcA = (1, -1, 1, -1, \ldots, (-1)^{n-1}, \ldots)$ and for any integer $k \geq 1$, we have $(-1)^{i-1} \ll 2^{i/(2k)}$ as $i \rightarrow \infty$. Thus, as another application of \thmref{erdoskacforomegaA}, we obtain

\begin{cor}
    Let $\mcp$, $\mcm$, and $X$ satisfy condition \eqref{star}. Let $x \in X$ and $k \geq 1$ be any integer. Let $\mathcal{N}_k(x)$ denote the set of $k$-full elements in $\mcm$ with norm less than or equal to $x$. Then for $a \in \mathbb{R}$, we have
$$\lim_{x \rightarrow \infty} \frac{1}{|\mathcal{N}_k(x)|} \bigg| \left\{ \mfm \in \mathcal{N}_k(x) \ : \ N(\mfm) \geq 3, 
\ \frac{(-1)^{k-1} \omega_T(\mfm) - \log \log N(\mfm)}{\sqrt{\log \log N(\mfm)}} \leq a \right\} \bigg| = \Phi(a).$$
\end{cor}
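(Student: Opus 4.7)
The plan is to derive this corollary as an immediate specialization of Theorem~\ref{erdoskacforomegaA}, applied to the sequence $\mcA=(1,-1,1,-1,\ldots)$ with $a_i = (-1)^{i-1}$. There are essentially two things to check: that $\mcA$ meets the growth hypothesis of Theorem~\ref{erdoskacforomegaA}, and that after division by $a_k$, the quantity $\omega_\mcA(\mfm)$ matches $(-1)^{k-1}\omega_T(\mfm)$ on the set $\mathcal{N}_k$ of $k$-full elements.

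For the first point, $a_k = (-1)^{k-1}\neq 0$, and since $|a_i|=1$ for every $i$, we trivially have $a_i \ll B^i$ for any $B \geq 1$. In particular, for each fixed integer $k\geq 1$ we have $2^{1/k}>1$, so we may take $B = 2^{1/(2k)}$ with $\alpha = 1/(2k) > 0$, and the condition $B \leq 2^{1/k-\alpha}$ from Theorem~\ref{erdoskacforomegaA} is satisfied (and consequently \eqref{Condition-ai} as well). Hence the hypotheses of Theorem~\ref{erdoskacforomegaA} all hold for this choice of $\mcA$ and each $k\geq 1$.

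For the second point, I use the defining identity \eqref{def-omegaT}, namely $\omega_T(\mfm)=\sum_{j\geq 1}(-1)^{j-1}\omega_j(\mfm)$. If $\mfm\in\mathcal{N}_k$, then $n_\mfp(\mfm)\geq k$ whenever $n_\mfp(\mfm)\geq 1$, so $\omega_j(\mfm)=0$ for every $j<k$. Consequently
\begin{equation*}
\omega_\mcA(\mfm) \;=\; \sum_{j\geq 1} (-1)^{j-1}\omega_j(\mfm) \;=\; \sum_{j\geq k}(-1)^{j-1}\omega_j(\mfm) \;=\; \omega_T(\mfm),
\end{equation*}
and therefore $\tfrac{1}{a_k}\omega_\mcA(\mfm) = (-1)^{k-1}\omega_T(\mfm)$ for every $\mfm\in\mathcal{N}_k$. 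Plugging this identification into the conclusion of Theorem~\ref{erdoskacforomegaA} yields exactly the limit claimed in the corollary.

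In this proof there is no real obstacle beyond these bookkeeping steps: the analytic content, including the construction of $f_k$, the verification of condition~\eqref{Scondition} and the moment estimates over $\mathcal{N}_k$, is already packaged inside Theorem~\ref{erdoskacforomegaA}. The only mild subtlety worth highlighting in writing the proof is the remark that restricting to $k$-full elements kills all contributions from indices $j<k$ in the series defining $\omega_T$, which is precisely what makes the normalization $1/a_k$ in Theorem~\ref{erdoskacforomegaA} translate cleanly into the factor $(-1)^{k-1}$ appearing in the statement.
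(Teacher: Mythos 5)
Your proposal is correct and follows essentially the same route as the paper: the paper likewise obtains this corollary by applying Theorem~\ref{erdoskacforomegaA} to $\mcA = (1,-1,1,-1,\ldots)$, noting $(-1)^{i-1} \ll 2^{i/(2k)}$ to satisfy the growth hypothesis, and identifying $\tfrac{1}{a_k}\omega_\mcA = (-1)^{k-1}\omega_T$ via \eqref{def-omegaT}. Your extra observation that $\omega_j(\mfm)=0$ for $j<k$ on $\mathcal{N}_k$ is harmless but not needed here, since $\omega_\mcA=\omega_T$ holds identically for this choice of $\mcA$.
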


Finally, in Section \ref{applications}, we provide various applications of our general setting. Let $\mcA = (a_1, a_2, \ldots)$ be a sequence of complex numbers satisfying any of the following types:
\begin{enumerate}
    \item[(1)] if $a_i = 1$ for all $i \in \mathbb{Z}_{>0}$, i.e., $\omega_\mcA(\mfm) = \omega(\mfm)$,
    \item[(2)] if $a_i = i$ for all $i \in \mathbb{Z}_{>0}$, i.e., $\omega_\mcA(\mfm) = \Omega(\mfm)$,
    \item[(3)] if $a_i = \log(i+1)$ for all $i \in \mathbb{Z}_{>0}$, i.e., $\omega_\mcA(\mfm) = \log d(\mfm)$,
    \item[(4)] if $a_i = 1$ for all odd $i$ and $a_i = - 1$ for all even $i$, i.e., $\omega_\mcA(\mfm) = \omega_T(\mfm)$.
    \item[(5)] if $a_i = 0$ for all $i \neq k$ and $a_k = 1$, i.e., $\omega_\mcA(\mfm) = \omega_k(\mfm)$.
\end{enumerate}

For such $\mcA'$s, we apply \thmref{erdoskacforomegaA} and \thmref{erdoskacforomegaA-hfree} to prove the Erd\H{o}s-Kac theorems for the $\omega_\mcA$-function in number fields, global function fields, and geometrically irreducible projective varieties, demonstrating the broad applicability of our approach.

\section{Review of Probability Theory}
In this section, we review some results from probability theory that are essential for our study. We repeat \cite[Section 2]{dkl3} here for the easiness of the readers. Interested readers can find a more detailed version of the results mentioned in this section in \cite[Section 2]{liu}.

Let $X$ be a random variable with a probability measure P. For a real number $t$, let $F(t)$ be the distribution function of $X$ defined as 
$$F(t) := P(X \leq t).$$
The expectation of $X$ is defined as
$$\textnormal{E}(X) := \int_{-\infty}^\infty t \ d F(t).$$
The variance of $X$, denoted as Var($X$), which measures the deviation of $X$ from its expectation is defined as
$$\textnormal{Var}(X) := \textnormal{E}(X^2) - (\textnormal{E}(X))^2.$$
Moreover, if $Y$ is another random variable with the same probability measure P, we have
$$\textnormal{E}(X+Y) = \textnormal{E}(X) + \textnormal{E}(Y).$$
The above property is called the linearity of expectation. Additionally, if $X$ and $Y$ are independent, i.e., for all $x \in X$ and for all $y \in Y$, 
$$P(X \leq x, Y \leq y) = \textnormal{P}(X \leq x) \cdot \textnormal{P}(Y \leq y),$$
then we have
$$\textnormal{E}(X \cdot Y) = \textnormal{E}(X) \cdot \textnormal{E}(Y),$$
and
$$\textnormal{Var}(X+Y) = \textnormal{Var}(X) + \textnormal{Var}(Y).$$
Given a sequence of random variables $\{X_n\}$ and $\alpha \in \mathbb{R}$, we say $\{X_n\}$ \textit{converges in probability} to $\alpha$ if for any $\epsilon > 0$,
$$\lim_{n \rightarrow \infty} P(|X_n - \alpha| > \epsilon) = 0.$$
We denote this by
$$X_n \xlongrightarrow{P}\alpha.$$
Using the above definitions, we list the following facts from probability theory as mentioned in the third author's work \cite[Page 595-596]{liu}.
\begin{fact}\label{fact1}
    Given a sequence of random variables $\{X_n\}$, if 
    $$\lim_{n \rightarrow \infty} \textnormal{E} (|X_n|) = 0,$$
    we have
    $$X_n \xlongrightarrow{P} 0.$$
\end{fact}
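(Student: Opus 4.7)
The plan is to obtain convergence in probability directly from Markov's inequality applied to the non-negative random variable $|X_n|$. The definition we must verify is that, for every $\epsilon > 0$,
$$\lim_{n \to \infty} P(|X_n - 0| > \epsilon) = \lim_{n \to \infty} P(|X_n| > \epsilon) = 0.$$
So one fixes an arbitrary $\epsilon > 0$ at the outset and reduces the task to establishing this single-parameter limit.

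The key ingredient is Markov's inequality in the form
$$P(|X_n| > \epsilon) \leq \frac{\textnormal{E}(|X_n|)}{\epsilon},$$
which follows in one line from the pointwise bound $\epsilon \cdot \mathbf{1}_{\{|X_n| > \epsilon\}} \leq |X_n|$ by taking expectations and using monotonicity. Given this inequality, the hypothesis $\lim_{n \to \infty} \textnormal{E}(|X_n|) = 0$ immediately forces $P(|X_n| > \epsilon) \to 0$ as $n \to \infty$. Since $\epsilon > 0$ was arbitrary, this is exactly the definition of $X_n \xlongrightarrow{P} 0$ recorded in the preceding paragraph.

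There is essentially no real obstacle in this argument — the statement is a textbook corollary of Markov's inequality and the proof fits into one display. The only minor subtlety worth flagging is that the hypothesis $\textnormal{E}(|X_n|) \to 0$ implicitly requires $\textnormal{E}(|X_n|) < \infty$ for all sufficiently large $n$, which guarantees that the right-hand side of the Markov bound is meaningful; once this is noted, no further machinery from the preceding sections of the paper is needed.
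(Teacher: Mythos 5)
Your proof is correct: Markov's inequality gives $P(|X_n| > \epsilon) \leq \textnormal{E}(|X_n|)/\epsilon$ for each fixed $\epsilon > 0$, and the hypothesis then forces this to vanish, which is exactly the definition of $X_n \xlongrightarrow{P} 0$. The paper itself states this as a known fact without proof (citing the probability background in [Liu, pp.~595--596]), and your argument is the standard justification, so there is nothing to reconcile.
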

\begin{fact}\label{fact2}
    Let $\{ X_n \}$, $\{ Y_n \}$, and $\{ U_n \}$ be sequences of random variables with the same probability measure P. Let $U$ be a distribution function. Suppose
    $$X_n \xlongrightarrow{P} 1 \quad \textnormal{ and } \quad Y_n \xlongrightarrow{P} 0.$$
    For all $\gamma \in \mathbb{R}$, we have
    $$\lim_{n \rightarrow \infty} P(U_n \leq \gamma) = U(\gamma)$$
    if and only if
    $$\lim_{n \rightarrow \infty} P\left( (X_n U_n + Y_n) \leq \gamma \right) = U(\gamma).$$
\end{fact}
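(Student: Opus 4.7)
The plan is to prove the statement as a Slutsky-type convergence theorem with two directions. The key idea is that under the hypotheses $X_n \xrightarrow{P} 1$ and $Y_n \xrightarrow{P} 0$, the difference $X_n U_n + Y_n - U_n = (X_n - 1) U_n + Y_n$ is small in probability whenever $U_n$ is confined to a bounded region, so $\{X_n U_n + Y_n\}$ and $\{U_n\}$ must share the same limiting distribution. I would prove the forward direction first and then obtain the reverse by inverting the linear transformation.

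For the forward direction, fix a continuity point $\gamma$ of $U$ and $\epsilon > 0$, and define the ``good event'' $G_n = \{|X_n - 1| \leq \epsilon\} \cap \{|Y_n| \leq \epsilon\}$, which by hypothesis satisfies $P(G_n) \to 1$. Next, choose $M$ large so that $U(-M)$ and $1 - U(M)$ are both small and $\pm M$ are continuity points of $U$. On $G_n \cap \{|U_n| \leq M\}$ the triangle inequality gives
$$|X_n U_n + Y_n - U_n| \leq \epsilon M + \epsilon = \epsilon(M+1),$$
so one can sandwich
$$P(U_n \leq \gamma - \epsilon(M+1)) - P(G_n^c) - P(|U_n| > M) \leq P(X_n U_n + Y_n \leq \gamma)$$
and symmetrically an upper bound with $\gamma + \epsilon(M+1)$. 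Taking $n \to \infty$, then $\epsilon \to 0$, then $M \to \infty$ and using the continuity of $U$ at $\gamma$, both bounds collapse to $U(\gamma)$.

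For the reverse direction, I would observe that on the event $\{|X_n - 1| \leq 1/2\}$ (whose probability tends to $1$) one has $X_n > 0$, and $1/X_n \xrightarrow{P} 1$. Setting $V_n := X_n U_n + Y_n$, we recover $U_n = (1/X_n) V_n + (-Y_n/X_n)$ with multiplier $1/X_n \xrightarrow{P} 1$ and shift $-Y_n/X_n \xrightarrow{P} 0$; the forward direction applied to $V_n$ then yields $P(U_n \leq \gamma) \to U(\gamma)$. The main obstacle is the technical truncation step: if the order of limits is chosen incorrectly, the unbounded tails of $U_n$ can spoil the approximation $|X_n U_n + Y_n - U_n| \leq \epsilon(M+1)$. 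Taking $n \to \infty$ first (to handle $P(G_n^c) \to 0$), then $\epsilon \to 0$ (to shrink the sandwich), and only finally $M \to \infty$ (to shrink $P(|U_n| > M)$, using tightness implied by convergence in distribution) is essential.
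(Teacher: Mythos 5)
The paper does not actually prove this fact: it is quoted from \cite[pp.~595--596]{liu} as background, so there is no in-paper argument to compare yours against. Your proposal is the standard Slutsky-type truncation argument and it is correct. The decomposition $X_nU_n+Y_n-U_n=(X_n-1)U_n+Y_n$, the good event $G_n$, the truncation $\{|U_n|\le M\}$ at continuity points $\pm M$ of $U$ (with tightness supplied by the assumed convergence of $P(U_n\le\cdot)$), and the order of limits $n\to\infty$, then $\epsilon\to0$, then $M\to\infty$ all work as you describe; the reverse direction via $1/X_n\xlongrightarrow{P}1$ and $-Y_n/X_n\xlongrightarrow{P}0$ is a clean way to avoid redoing the sandwich. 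Two small points worth recording if you write this up: (i) on the exceptional event where $X_n$ vanishes the identity $U_n=(1/X_n)V_n-Y_n/X_n$ fails, but that event has probability $o(1)$, so it does not affect the limits; (ii) your argument establishes the equivalence at continuity points of $U$, which is the correct general statement --- at a jump of $U$ the literal ``for all $\gamma$'' version can fail (take $U_n\equiv\gamma_0>0$ deterministic, $X_n=1+1/n$, $Y_n=0$, $U$ the point mass at $\gamma_0$) --- and since the paper only ever applies the fact with $U=\Phi$, which is continuous everywhere, this restriction is harmless.
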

Let $\Phi(\gamma)$ denote the Gaussian normal distribution as defined in \eqref{phi(a)}. For $r \in \mathbb{Z}_{>0}$, the $r$-th moment of $\Phi$ is defined as
$$\mu_r := \int_{-\infty}^\infty t^r d \Phi(t).$$
Then we have:
\begin{fact}\label{fact3}
    Given a sequence of distribution functions $\{F_n\}$, if for all $r \in \mathbb{Z}_{>0}$,
    $$\lim_{n \rightarrow \infty} \int_{-\infty}^\infty t^r d F_n(t) = \mu_r,$$
    then for all $\gamma \in \mathbb{R}$, we have
    $$\lim_{n \rightarrow \infty} F_n(\gamma) = \Phi(\gamma).$$
\end{fact}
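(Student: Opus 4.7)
The plan is to invoke the classical Fr\'echet--Shohat method of moments: if a probability distribution $F$ on $\mathbb{R}$ is uniquely determined by its moment sequence and if $\int t^r \, dF_n(t) \to \int t^r \, dF(t)$ for every $r \in \mathbb{Z}_{>0}$, then $F_n$ converges weakly to $F$. Since $\Phi$ is continuous on all of $\mathbb{R}$, weak convergence will be equivalent to pointwise convergence $F_n(\gamma) \to \Phi(\gamma)$ at every $\gamma$, which is exactly the claim. The argument proceeds via tightness together with the extraction of subsequential limits.

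First I would verify that $\Phi$ is uniquely determined by its moments. Using the explicit formula $\mu_{2r} = (2r)!/(2^r r!)$ (and $\mu_{2r-1} = 0$) together with Stirling's approximation, one obtains $\mu_{2r}^{1/(2r)} = O(\sqrt{r})$, so Carleman's criterion $\sum_{r \geq 1} \mu_{2r}^{-1/(2r)} = \infty$ holds, and hence no other probability distribution on $\mathbb{R}$ shares the moment sequence $\{\mu_r\}$. Next I would establish tightness of $\{F_n\}$: the hypothesis implies $\int t^2 \, dF_n(t) \to \mu_2 = 1$, so the second moments are uniformly bounded, and Chebyshev's inequality gives
$$1 - F_n(M) + F_n(-M) \leq \frac{1}{M^2} \int_{-\infty}^\infty t^2 \, dF_n(t),$$
which is $O(1/M^2)$ uniformly in $n$. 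By Helly's selection theorem, every subsequence of $\{F_n\}$ admits a further subsequence $\{F_{n_k}\}$ converging weakly to some distribution function $F$.

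The final step is to identify every such subsequential limit $F$ with $\Phi$. Fix $r \in \mathbb{Z}_{>0}$. The uniform bound on $\int t^{2r+2} \, dF_{n_k}(t)$, obtained from the hypothesis at exponent $2r+2$, provides uniform integrability of $|t|^r$ under $\{F_{n_k}\}$, which allows the $r$-th moment to pass through the weak limit: $\int t^r \, dF(t) = \lim_k \int t^r \, dF_{n_k}(t) = \mu_r$. By the uniqueness step, $F = \Phi$. Since every subsequence of $\{F_n\}$ has a further subsequence converging weakly to the same limit $\Phi$, the full sequence converges weakly to $\Phi$, yielding $F_n(\gamma) \to \Phi(\gamma)$ for every $\gamma \in \mathbb{R}$. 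The main obstacle is precisely the uniform-integrability argument that allows moments to pass through the weak limit: one truncates at $|t| > M$, controls the tails using the bounded $(2r+2)$-th moment, and then passes to the limit in $k$ before letting $M \to \infty$.
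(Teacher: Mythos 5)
Your proposal is a correct and complete rendition of the standard Fr\'echet--Shohat method-of-moments argument: Carleman's criterion for the moment-determinacy of $\Phi$, tightness from the uniformly bounded second moments, Helly selection, passage of moments through the weak limit via uniform integrability from the $(2r+2)$-th moments, and the subsequence principle, with pointwise convergence following from the continuity of $\Phi$. The paper itself offers no proof of this statement --- it is quoted as a known fact from probability theory (citing the third author's earlier work) --- so there is nothing to compare against, but your argument is exactly the standard proof one would supply.
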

As a converse of the above fact, we have
\begin{fact}\label{fact4}
    Let $r \in \mathbb{Z}_{>0}$. Given a sequence of distribution functions $\{F_n\}$, if 
    $$\lim_{n \rightarrow \infty} F_n(\gamma) = \Phi(\gamma), \quad \textnormal{for all } \gamma \in \mathbb{R}$$
    and
    $$\sup_n \left\{ \int_{-\infty}^\infty |t|^{r+\delta} dF_n(t) \right\} < \infty, \quad \textnormal{for some } \delta = \delta(r) > 0,$$
    we have
    $$\lim_{n \rightarrow \infty} \int_{-\infty}^\infty t^r d F_n(t) = \mu_r.$$
\end{fact}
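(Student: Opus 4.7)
The plan is to deduce moment convergence from weak convergence by establishing uniform integrability of $|t|^r$ under the measures $dF_n$, using the uniform bound on the $(r+\delta)$-th moment as the crucial input. The skeleton is the classical three-step argument: truncate the integrand, pass to the limit on the bounded piece via weak convergence, and control the tails uniformly in $n$.

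First I would fix a parameter $M>0$ and decompose
\begin{equation*}
\int_{-\infty}^{\infty} t^r \, dF_n(t) = \int_{|t|\le M} t^r\, dF_n(t) + \int_{|t|>M} t^r\, dF_n(t),
\end{equation*}
and do the same for $d\Phi(t)$. For the tail piece I would invoke the elementary estimate
\begin{equation*}
\left| \int_{|t|>M} t^r\, dF_n(t) \right| \le \int_{|t|>M} |t|^r\, dF_n(t) \le \frac{1}{M^{\delta}} \int_{-\infty}^{\infty} |t|^{r+\delta}\, dF_n(t) \le \frac{C}{M^{\delta}},
\end{equation*}
where $C := \sup_n \int |t|^{r+\delta}\, dF_n(t) < \infty$ by hypothesis. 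The same bound applies to $\Phi$ since its moments of all orders are finite. Thus, given any $\epsilon>0$, I can choose $M$ large (independent of $n$) so that both tail contributions are smaller than $\epsilon/3$.

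For the truncated piece, the obstacle is that $t \mapsto t^r \mathbf{1}_{|t|\le M}$ is not continuous, so Portmanteau does not apply directly. I would resolve this by introducing a continuous cutoff $\phi_M(t)$ that equals $1$ for $|t|\le M$, vanishes for $|t|\ge M+1$, and is linear in between. The function $t^r \phi_M(t)$ is then bounded and continuous, so weak convergence $F_n \to \Phi$ (which is equivalent to $F_n(\gamma) \to \Phi(\gamma)$ at every point of continuity of $\Phi$, and $\Phi$ is continuous everywhere) yields
\begin{equation*}
\lim_{n\to\infty} \int_{-\infty}^{\infty} t^r \phi_M(t)\, dF_n(t) = \int_{-\infty}^{\infty} t^r \phi_M(t)\, d\Phi(t).
\end{equation*}
The discrepancy between $\int t^r \phi_M \, dF_n$ and $\int_{|t|\le M} t^r\, dF_n$ is supported in the annulus $M \le |t| \le M+1$, and is therefore bounded in absolute value by the same tail estimate $C/M^{\delta}$, uniformly in $n$; likewise for $\Phi$.

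Combining the three estimates, for $M$ large and then $n$ large, the triangle inequality gives
\begin{equation*}
\left| \int t^r\, dF_n(t) - \int t^r\, d\Phi(t) \right| < \epsilon,
\end{equation*}
which proves the claim since $\int t^r\, d\Phi(t) = \mu_r$. The main conceptual obstacle is the truncation step: one must argue that the uniform tail bound controls both the ``hole'' created by cutting off $t^r$ at $|t|=M$ and the ``overflow'' introduced by the smooth cutoff $\phi_M$, and that these controls are uniform in $n$, which is exactly what the hypothesis $\sup_n \int |t|^{r+\delta}\, dF_n(t) < \infty$ provides.
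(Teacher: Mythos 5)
Your proof is correct. Note that the paper itself offers no proof of this statement: it is listed as a known fact from probability theory, imported verbatim from the third author's earlier work (\cite[Page 595--596]{liu}), so there is no argument in the paper to compare against. What you have written is the standard uniform-integrability argument (essentially the textbook theorem that weak convergence plus a uniformly bounded moment of strictly higher order implies convergence of the lower moments): the tail bound $\int_{|t|>M}|t|^r\,dF_n \le M^{-\delta}\sup_n\int|t|^{r+\delta}\,dF_n$ is the right use of the hypothesis, the continuous cutoff $\phi_M$ correctly repairs the discontinuity of $t^r\mathbf{1}_{|t|\le M}$ so that the Helly--Bray/Portmanteau step applies, and the error introduced in the annulus $M\le|t|\le M+1$ is absorbed by the same tail bound. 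All three estimates are uniform in $n$ where they need to be, so the $\epsilon/3$ assembly is sound.
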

The next fact is a special case of the Central Limit Theorem.
\begin{fact}\label{fact5}
    Let $X_1,X_2,\ldots, X_i, \ldots$ be a sequence of independent random variables and $\textnormal{Im}(X_i)$ is the image of $X_i$. Suppose
    \begin{enumerate}
        \item $\sup_{i} \{ \textnormal{Im}(X_i) \} < \infty$,
        \item $\textnormal{E}(X_i) = 0$ and $\textnormal{Var}(X_i) < \infty$ for all $i$.
    \end{enumerate}
    For $n \in \mathbb{Z}_{>0}$, let $\Phi_n$ be the normalization of $X_1,X_2,\ldots,X_n$ defined as
    $$\Phi_n := \left( \sum_{i=1}^{n} X_i \right) \bigg\slash \left( \sum_{i=1}^n \textnormal{Var}(X_i) \right)^{1/2}.$$
    If $\sum_{i=1}^\infty \textnormal{Var}(X_i)$ diverges, then we have
    $$\lim_{n \rightarrow \infty} P(\Phi_n \leq \gamma) = \Phi(\gamma).$$
\end{fact}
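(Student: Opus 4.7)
The plan is to establish Fact \ref{fact5} via the method of moments, which dovetails with Fact \ref{fact3} already laid out. By Fact \ref{fact3}, it suffices to prove that for every integer $r \geq 1$, the $r$-th moment $\textnormal{E}(\Phi_n^r)$ converges to $\mu_r$, the $r$-th moment of the standard Gaussian. This reduces the statement to a combinatorial computation, which I will carry out using independence, the zero-mean hypothesis, and the uniform boundedness of the $X_i$.

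Write $s_n^2 := \sum_{i=1}^n \textnormal{Var}(X_i)$, so $s_n \to \infty$ by hypothesis, and let $M := \sup_i \{\textnormal{Im}(X_i)\} < \infty$ so that $|X_i| \leq M$ for all $i$. Expanding the $r$-th power and using independence,
$$\textnormal{E}(\Phi_n^r) = s_n^{-r} \sum_{(i_1,\ldots,i_r)} \textnormal{E}(X_{i_1} \cdots X_{i_r}) = s_n^{-r} \sum_\pi \sum_{(j_1,\ldots,j_s)\ \text{distinct}} \prod_{t=1}^s \textnormal{E}(X_{j_t}^{k_t}),$$
where the outer sum is over set partitions $\pi$ of $\{1,\ldots,r\}$ into $s$ blocks of sizes $k_1,\ldots,k_s$, and $j_t$ is the common index at all positions in the $t$-th block. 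Since $\textnormal{E}(X_j) = 0$, only partitions with every block size $k_t \geq 2$ contribute.

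The dominant contribution comes from pairings, namely the case $k_t = 2$ for all $t$ with $r = 2m$ and $s = m$: there are $(2m-1)!! = \mu_r$ such partitions, and for each the inner sum equals $\sum_{\text{distinct}} \prod_t \textnormal{Var}(X_{j_t}) = (s_n^2)^m + O(s_n^{2m-2} M^2)$, where the correction bounds diagonal identifications. Dividing by $s_n^{2m}$ gives $(2m-1)!!$ plus $o(1)$. For non-pairing partitions some $k_t \geq 3$, so $s \leq (r-1)/2$; using $|\textnormal{E}(X_j^{k_t})| \leq M^{k_t - 2} \textnormal{Var}(X_j)$ the inner sum is bounded by $M^{r-2s} (s_n^2)^s$, whose contribution to $\textnormal{E}(\Phi_n^r)$ is $O(s_n^{2s-r}) = O(s_n^{-1}) = o(1)$. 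When $r$ is odd there are no pairings, so $\textnormal{E}(\Phi_n^r) = o(1) = \mu_r$, and when $r$ is even we recover $(r-1)!! = \mu_r$. Applying Fact \ref{fact3} then finishes the argument.

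The main obstacle will be the combinatorial bookkeeping: classifying multi-indices $(i_1,\ldots,i_r)$ by the partition of $\{1,\ldots,r\}$ they induce and tracking the dependence on both $s_n$ and $M$ uniformly across all partition types. The uniform bound $|X_i| \leq M$ is what controls the higher absolute moments so that the non-pairing contributions are genuinely $o(s_n^r)$ rather than merely $O(s_n^r)$, and the divergence of $s_n^2$ provides the denominator needed to extract the leading $(2m-1)!!$ from the pairing sum. An alternative route via characteristic functions would also work, but the moment-based approach integrates more naturally with the surrounding machinery and furnishes exactly the moment convergence that Fact \ref{fact4} will later leverage.
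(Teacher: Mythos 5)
Your moment-method argument is correct: the reduction via Fact \ref{fact3} is legitimate, the partition bookkeeping is sound (singleton blocks vanish by $\textnormal{E}(X_j)=0$; pairings contribute $(2m-1)!!\,(s_n^{2})^{m}+O(M^{2}s_n^{2m-2})$ after removing the diagonal; blocks of size $\geq 3$ force $2s\leq r-1$ and the bound $|\textnormal{E}(X_j^{k})|\leq M^{k-2}\textnormal{Var}(X_j)$ makes their total contribution $O(s_n^{-1})$), and the divergence of $s_n^{2}$ is used exactly where it is needed. The paper, however, does not prove Fact \ref{fact5} at all: it is quoted as a known special case of the Central Limit Theorem, with a pointer to the probabilistic background in the third author's earlier work, the point being that uniform boundedness of the $X_i$ together with $s_n\to\infty$ makes the Lindeberg (or Lyapunov) condition trivially verified. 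So your route is genuinely different in that it is self-contained and purely combinatorial, and it has the incidental merit of producing the explicit moment convergence $\textnormal{E}(\Phi_n^{r})\to\mu_r$ that the surrounding machinery (Facts \ref{fact3} and \ref{fact4}) traffics in; the cost is the partition bookkeeping, which the citation-based route avoids entirely. Either is acceptable here, since Fact \ref{fact5} plays the role of an imported black box in the proof of Theorem \ref{yurugen}.
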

\section{Essential lemmas}
In this section, we list all the lemmas required to prove our theorems. The first three lemmas establish statements equivalent to \thmref{yurugen}, and thus proving any equivalent statement would be sufficient in proving the theorem. The next two lemmas establish results necessary to prove one of the equivalent conditions mentioned in the third lemma of this section. Together, these lemmas prove \thmref{yurugen} in the next section. These results bear a close resemblance to the results from \cite[Section 3]{dkl3}. However, because of subtle changes to the arguments that involve a new function $f$, we present the results in detail here.

The final set of three lemmas in this section establishes results involving prime elements required to complete the proofs of all other theorems mentioned in Section \ref{intro}.

Let $\mcp$, $\mcm$, $\mcs$, $X$, and $f$ be defined as in Section 1 and assume that they satisfy \eqref{star}, \eqref{Scondition} and the conditions (a) to (f). For $\mfm \in \mcs$ and $x \in X$, we define
$$P_{\mcs,x} \{ \mfm \ : \ \mfm \text{ satisfies some conditions} \}$$
to be the quantity 
$$\frac{1}{|\mcs(x)|} \left| \{ \mfm \in \mcs(x) \ : \  \mfm \text{ satisfies some conditions} \} \right|.$$
Note that $P_{\mcs,x}$ is a probability measure on $\mcs$. Let $g$ be a function from $\mcs$ to $\mathbb{R}$. The expectation of $g$ with respect to $P_{\mcs,x}$ is denoted by
$$\textnormal{E}_{\mcs,x} \{ \mfm : g(\mfm) \} := \frac{1}{|\mcs(x)|} \sum_{\mfm \in \mcs(x)} g(\mfm).$$

The first lemma gives an equivalent statement of \thmref{yurugen}.
\begin{lma}\label{linkN(m)x}
    $$\lim_{x \rightarrow \infty} P_{\mcs,x} \bigg\{ \mfm \ : \ N(\mfm) \geq 3, \ \frac{\omega(f(\mfm)) - \log \log N(\mfm)}{\sqrt{\log \log N(\mfm)}} \leq \gamma \bigg\} = \Phi(\gamma)$$
    if and only if
    $$\lim_{x \rightarrow \infty} P_{\mcs,x} \bigg\{ \mfm \ : \ \frac{\omega(f(\mfm)) - \log \log x}{\sqrt{\log \log x}} \leq \gamma \bigg\} = \Phi(\gamma).$$
\end{lma}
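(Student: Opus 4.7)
The plan is to match the two normalizations via \factref{fact2}. For each $\mfm \in \mcs(x)$ with $N(\mfm) \geq 3$, I would start from the algebraic identity
$$\frac{\omega(f(\mfm)) - \log\log N(\mfm)}{\sqrt{\log\log N(\mfm)}} = X_x(\mfm) \cdot U_x(\mfm) + Y_x(\mfm),$$
where $U_x(\mfm) := (\omega(f(\mfm)) - \log\log x)/\sqrt{\log\log x}$, $X_x(\mfm) := \sqrt{\log\log x / \log\log N(\mfm)}$, and $Y_x(\mfm) := (\log\log x - \log\log N(\mfm))/\sqrt{\log\log N(\mfm)}$. Viewing these as random variables on $(\mcs(x), P_{\mcs,x})$ (extended arbitrarily to the negligible set $N(\mfm) < 3$), \factref{fact2} reduces the lemma to showing $X_x \xlongrightarrow{P} 1$ and $Y_x \xlongrightarrow{P} 0$ as $x \to \infty$.

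To verify both convergences, I would split $\mcs(x)$ at the threshold $N(\mfm) = x^{1/2}$. The hypothesis \eqref{Scondition} immediately yields $P_{\mcs,x}\{\mfm : N(\mfm) \leq x^{1/2}\} = |\mcs(x^{1/2})|/|\mcs(x)| = o(1)$, so this ``small-norm'' portion can be discarded. On the complementary set $\{\mfm : x^{1/2} < N(\mfm) \leq x\}$, one has $\log\log x - \log 2 < \log\log N(\mfm) \leq \log\log x$, which forces the uniform estimates
$$1 \leq X_x(\mfm) \leq \left(1 - \tfrac{\log 2}{\log\log x}\right)^{-1/2} = 1 + o(1), \qquad 0 \leq Y_x(\mfm) < \frac{\log 2}{\sqrt{\log\log x - \log 2}} = o(1).$$
Combining the two regimes gives $X_x \xlongrightarrow{P} 1$ and $Y_x \xlongrightarrow{P} 0$, as required. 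The ``$N(\mfm) \geq 3$'' clause in the lemma statement costs nothing in the limit: by \eqref{star}, $|\{\mfm \in \mcm : N(\mfm) \leq 2\}| \leq \mcm(2)$ is a fixed finite number, while $|\mcs(x)| \to \infty$ since $\mcs$ is infinite, so $P_{\mcs,x}\{N(\mfm) < 3\} \to 0$.

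The main obstacle in this outline is really just the invocation of \eqref{Scondition}; that hypothesis is precisely what guarantees the bulk of $\mcs(x)$ lives in a short logarithmic window around $x$, which is exactly what is needed to interchange $\log\log N(\mfm)$ and $\log\log x$. Everything else—the algebraic identity, the uniform estimates on $X_x - 1$ and $Y_x$, and the discarding of atoms with $N(\mfm) < 3$—is routine, and \factref{fact2} then delivers the equivalence in a single step.
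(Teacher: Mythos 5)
Your proposal is correct and follows essentially the same route as the paper: the algebraic identity relating the two normalizations (written in the opposite direction, which is immaterial), reduction via \factref{fact2} to showing $X_x \xlongrightarrow{P} 1$ and $Y_x \xlongrightarrow{P} 0$, and the split at $N(\mfm)=x^{1/2}$ where \eqref{Scondition} disposes of the small-norm part and the uniform bounds $\log\log x - \log 2 < \log\log N(\mfm) \leq \log\log x$ handle the rest. The paper outsources these last verifications to \cite[Lemma 3.1]{dkl3}, but your explicit estimates are exactly what that reference carries out.
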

\begin{proof}
The proof closely follows the steps of the proof of \cite[Lemma 3]{liu}. First note that
\begin{align*}
    \frac{\omega(f(\mfm)) - \log \log x}{\sqrt{\log \log x}} & = \frac{\omega(f(\mfm)) - \log \log N(\mfm)}{\sqrt{\log \log N(\mfm)}} \frac{\sqrt{\log \log N(\mfm)}}{\sqrt{\log \log x}} \\
    & \hspace{.5cm} + \frac{\log \log N(\mfm) - \log \log x}{\sqrt{\log \log x}}.
\end{align*}
Thus by Fact 2 and our assumption that $\mathcal{S}$ is infinite, to prove the lemma, it suffices to show that for any $\epsilon >0$, 
$$\lim_{x \rightarrow \infty} P_{\mathcal{S},x} \bigg\{ \mathfrak{m} \ : \ N(\mathfrak{m}) \geq 3, \ \bigg| \frac{\sqrt{\log \log N(\mathfrak{m})}}{\sqrt{\log \log x}} - 1 \bigg| > \epsilon \bigg\} = 0 $$
and
$$\lim_{x \rightarrow \infty} P_{\mathcal{S},x} \bigg\{ \mathfrak{m} \ : \ N(\mathfrak{m}) \geq 3, \ \bigg| \frac{\log \log N(\mathfrak{m}) - \log \log x}{\sqrt{\log \log x}} \bigg| > \epsilon \bigg\} = 0.$$
We show this by repeating the steps in \cite[Lemma 3.1, Page 10]{dkl3}, and thus complete the proof.
\end{proof}

Let $\beta$ be a constant with $0 < \beta \leq 1$ and $y = y(x) < x^\beta$ satisfying the conditions (a)-(f) as mentioned in Section \ref{intro}. For $\mfm \in \mcm$, we define the truncated function
$$\omega_y(f(\mfm)) = \left| \{ \ell \in \mcp \ : \ N(\ell) \leq y, \ n_\ell(f(\mfm)) \geq 1 \} \right|.$$

The next result establishes another equivalent formulation of the Erd\H{o}s-Kac theorem in terms of $\omega_y$. 

\begin{lma}\label{linkomegay}
    $$\lim_{x \rightarrow \infty} P_{\mcs,x} \bigg\{ \mfm \ : \ \frac{\omega(f(\mfm)) - \log \log x}{\sqrt{\log \log x}} \leq \gamma \bigg\} = \Phi(\gamma)$$
    if and only if
    $$\lim_{x \rightarrow \infty} P_{\mcs,x} \bigg\{ \mfm \ : \ \frac{\omega_y(f(\mfm)) - \log \log x}{\sqrt{\log \log x}} \leq \gamma \bigg\} = \Phi(\gamma).$$
\end{lma}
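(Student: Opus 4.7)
The plan is to apply Fact 2 with the constant sequence $X_n \equiv 1$ and an appropriate sequence $Y_n$ of error random variables, so that the equivalence reduces to showing that the truncation error, suitably normalized, converges to $0$ in probability. Concretely, I write
$$\frac{\omega(f(\mfm))-\log\log x}{\sqrt{\log\log x}} = \frac{\omega_y(f(\mfm))-\log\log x}{\sqrt{\log\log x}} + \frac{\omega(f(\mfm))-\omega_y(f(\mfm))}{\sqrt{\log\log x}},$$
set $U_n$ to be the $\omega_y$-version, $X_n\equiv 1$, and $Y_n$ the last summand. Then $X_nU_n+Y_n$ is the $\omega$-version, and Fact 2 gives the lemma as soon as $Y_n\xlongrightarrow{P}0$.

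By Fact 1, it suffices to show $\textnormal{E}_{\mcs,x}\{\mfm:|Y_n|\}=o(1)$, i.e.,
$$\textnormal{E}_{\mcs,x}\{\mfm:\omega(f(\mfm))-\omega_y(f(\mfm))\}=o\!\left((\log\log x)^{1/2}\right),$$
where I have dropped the absolute value since $\omega_y(f(\mfm))\leq\omega(f(\mfm))$. The quantity inside the expectation counts $\ell\in\mcp$ with $N(\ell)>y$ and $n_\ell(f(\mfm))\geq 1$. I split this count at $x^\beta$:
$$\omega(f(\mfm))-\omega_y(f(\mfm)) = \big|\{\ell:y<N(\ell)\leq x^\beta,\;n_\ell(f(\mfm))\geq 1\}\big| + \big|\{\ell:N(\ell)>x^\beta,\;n_\ell(f(\mfm))\geq 1\}\big|.$$

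For the second piece, condition (a) gives a uniform $O_\beta(1)$ bound for every $\mfm\in\mcs(x)$, so its contribution to the expectation is $O_\beta(1)=o((\log\log x)^{1/2})$. For the first piece, I swap sum and expectation and use the definitions of $\lambda_\ell$ and $e_\ell$:
$$\textnormal{E}_{\mcs,x}\Big\{\mfm:\big|\{\ell:y<N(\ell)\leq x^\beta,\,n_\ell(f(\mfm))\geq 1\}\big|\Big\} = \sum_{y<N(\ell)\leq x^\beta}\big(\lambda_\ell+e_\ell\big),$$
which is $o((\log\log x)^{1/2})$ by conditions (b) and (c) and the triangle inequality. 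Combining both estimates completes the bound on the expectation, hence the convergence $Y_n\xlongrightarrow{P}0$, hence the lemma via Fact 2.

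I do not expect any genuine obstacle here: the proof is a clean reduction, and the hypotheses (a), (b), (c) were designed precisely so that the tail of $\omega(f(\mfm))$ beyond the cutoff $y$ is negligible on the scale $\sqrt{\log\log x}$. The only mildly delicate point is the split at $x^\beta$, which is needed because conditions (b) and (c) only control primes with $N(\ell)\le x^\beta$; the very large primes are instead handled by the uniform bound in (a), and this two-regime split is the sole structural move in the argument.
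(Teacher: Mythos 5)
Your proposal is correct and follows essentially the same route as the paper: both reduce via Facts 1 and 2 to bounding $\textnormal{E}_{\mcs,x}\{\mfm : |\omega(f(\mfm))-\omega_y(f(\mfm))|/\sqrt{\log\log x}\}$, split the primes at $N(\ell)=x^\beta$, and invoke condition (a) for the large range and conditions (b) and (c) for the middle range. No gaps.
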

\begin{proof}
    Note that
    $$\frac{\omega_y(f(\mfm)) - \log \log x}{\sqrt{\log \log x}} = \frac{\omega(f(\mfm)) - \log \log x}{\sqrt{\log \log x}} + \frac{\omega_y(f(\mfm)) - \omega(f(\mfm))}{\sqrt{\log \log x}}.$$
    Thus, by Fact 1 and Fact 2, to prove the lemma, it suffices to prove
    $$\lim_{x \rightarrow \infty} \textnormal{E}_{\mcs,x} \bigg\{ \mfm \ : \ \bigg| \frac{\omega(f(\mfm)) - \omega_y(f(\mfm))}{\sqrt{\log \log x}} \bigg| \bigg\} = 0.$$ 
    Notice that
    \begin{align*}
        & \sum_{\substack{\mfm \in \mcs \\ N(\mfm) \leq x}}  |\omega(f(\mfm)) - \omega_y(f(\mfm))| \\
        & = \sum_{\substack{\mfm \in \mcs \\ N(\mfm) \leq x}}  \sum_{\substack{\ell \in \mcp \\ N(\ell) > y, \ n_\ell(f(\mfm)) \geq 1 }} 1  \\
        & = \sum_{\substack{\ell \in \mcp \\ y < N(\ell) \leq x^\beta }} \sum_{\substack{\mfm \in \mcs \\ N(\mfm) \leq x, \ n_\ell(f(\mfm)) \geq 1}} 1 + \sum_{\substack{\mfm \in \mcs \\ N(\mfm) \leq x}}  \sum_{\substack{\ell \in \mcp \\ N(\ell) > x^\beta, \ n_\ell(f(\mfm)) \geq 1 }} 1.
    \end{align*}
Using the definition of $\lambda_\ell$ and $e_\ell$, and the conditions (a), (b), and (c), we obtain
\begin{align*}
    \sum_{\substack{\mfm \in \mcs \\ N(\mfm) \leq x}}  |\omega(f(\mfm)) - \omega_y(f(\mfm))| & = \sum_{\substack{\ell \in \mcp \\ y < N(\ell) \leq x^\beta }} |\mcs(x)|(\lambda_\ell+ e_\ell) + O(|\mcs(x)|) \\
    & = o(|\mcs(x)|(\log \log x)^{1/2}) + O(|\mcs(x)|).
\end{align*}
Thus, we have
$$\textnormal{E}_{\mcs,x} \bigg\{ m \ : \ \bigg| \frac{\omega(f(\mfm)) - \omega_y(f(\mfm))}{\sqrt{\log \log x}} \bigg| \bigg\} = \frac{o(|\mcs(x)|(\log \log x)^{1/2})}{|\mcs(x)|(\log \log x)^{1/2}} = o(1),$$
which completes the proof.
\end{proof}
For $\ell \in \mcp$, we define the independent random variable $X_\ell$ by
$$P(X_\ell =1) = \lambda_\ell$$
and
$$P(X_\ell=0) = 1 - \lambda_\ell.$$
We define a new random variable $\mcs_y$ by
$$\mcs_y := \sum_{\substack{\ell \in \mcp \\ N(\ell) \leq y}} X_\ell.$$
Note that, by conditions (d) and (e), we have the expectation and variance of the random variable $\mcs_y$ as
$$\text{E}(\mcs_y) = \sum_{N(\ell) \leq y} \lambda_\ell = \log \log x + o \left( (\log \log x)^{1/2} \right),$$
and
$$\text{Var}(\mcs_y) = \sum_{N(\ell) \leq y} \lambda_\ell(1 - \lambda_\ell) = \log \log x + o \left( (\log \log x)^{1/2} \right).$$
Note that, we will use the notation $\textnormal{E}(\cdot)$ and $\textnormal{E}_{\mcs,x}\{\cdot\}$ respectively to distinguish the expectation of a random variable from the expectation of a function with respect to $P_{\mcs,x}$. However, in most cases, they will represent the same values.

The above setup leads us to another reformulation of \thmref{yurugen} in terms of $\textnormal{E}(\mcs_y)$.
\begin{lma}\label{linkomegayESy}
$$\lim_{x \rightarrow \infty} P_{\mcs,x} \bigg\{ \mfm \ : \ \frac{\omega_y(f(\mfm)) - \log \log x}{\sqrt{\log \log x}} \leq \gamma \bigg\} = \Phi(\gamma)$$
if and only if
$$\lim_{x \rightarrow \infty} P_{\mcs,x} \bigg\{ \mfm \ : \ \frac{\omega_y(f(\mfm)) - \textnormal{E}(\mcs_y)}{\sqrt{\textnormal{Var}(\mcs_y)}} \leq \gamma \bigg\} = \Phi(\gamma).$$
\end{lma}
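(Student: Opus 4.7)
The plan is to treat this as a Slutsky-type equivalence: both $\mathrm{E}(\mcs_y)$ and $\mathrm{Var}(\mcs_y)$ are deterministic quantities that, by conditions (d) and (e), agree with $\log \log x$ up to relative error $o(1)$, so the two normalizations differ only by an affine map that tends to the identity. Concretely, I would start from the algebraic identity
\begin{equation*}
\frac{\omega_y(f(\mfm)) - \mathrm{E}(\mcs_y)}{\sqrt{\mathrm{Var}(\mcs_y)}}
= \frac{\sqrt{\log \log x}}{\sqrt{\mathrm{Var}(\mcs_y)}} \cdot \frac{\omega_y(f(\mfm)) - \log \log x}{\sqrt{\log \log x}} + \frac{\log \log x - \mathrm{E}(\mcs_y)}{\sqrt{\mathrm{Var}(\mcs_y)}},
\end{equation*}
and set $X_n := \sqrt{\log \log x}/\sqrt{\mathrm{Var}(\mcs_y)}$ and $Y_n := (\log \log x - \mathrm{E}(\mcs_y))/\sqrt{\mathrm{Var}(\mcs_y)}$, viewed as (constant) random variables on $\mcs(x)$.

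Next, I would verify that $X_n \xrightarrow{P} 1$ and $Y_n \xrightarrow{P} 0$. Since both quantities are deterministic in $\mfm$, convergence in probability just means convergence of the numerical sequence. Using conditions (d) and (e) together with the paragraph preceding the lemma, I have $\mathrm{E}(\mcs_y) = \log \log x + o((\log \log x)^{1/2})$ and $\mathrm{Var}(\mcs_y) = \log \log x + o((\log \log x)^{1/2})$. Hence
\begin{equation*}
\frac{\log \log x}{\mathrm{Var}(\mcs_y)} = \frac{1}{1 + o((\log \log x)^{-1/2})} \longrightarrow 1,
\end{equation*}
so $X_n \to 1$, and
\begin{equation*}
|Y_n| = \frac{o((\log \log x)^{1/2})}{\sqrt{\log \log x + o((\log \log x)^{1/2})}} = o(1),
\end{equation*}
so $Y_n \to 0$.

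With these two facts in hand, I would invoke \factref{fact2} directly, taking $U_n$ to be the normalized variable $(\omega_y(f(\mfm)) - \log \log x)/\sqrt{\log \log x}$ and $U = \Phi$. The identity above shows $X_n U_n + Y_n$ is exactly the other normalized variable, so the equivalence of the two limiting statements follows.

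I do not expect any genuine obstacle here; the only point requiring minor care is that the random variables $X_n$ and $Y_n$ are deterministic functions of $x$ alone, so the probabilistic convergence conditions in \factref{fact2} reduce to ordinary numerical limits, which follow immediately from the asymptotics of $\mathrm{E}(\mcs_y)$ and $\mathrm{Var}(\mcs_y)$ provided by conditions (d) and (e).
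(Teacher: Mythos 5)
Your proposal is correct and follows essentially the same route as the paper: the identical affine decomposition of the two normalizations, verification that $\sqrt{\log\log x}/\sqrt{\textnormal{Var}(\mcs_y)}\to 1$ and $(\log\log x - \textnormal{E}(\mcs_y))/\sqrt{\textnormal{Var}(\mcs_y)}\to 0$ via conditions (d) and (e), and an appeal to \factref{fact2}. The only cosmetic difference is that the paper routes the second convergence through \factref{fact1} (vanishing expectation), whereas you observe directly that these quantities are deterministic so the convergence is just a numerical limit; both are fine.
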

\begin{proof}
Note that
$$\frac{\omega_y(f(\mfm)) - \textnormal{E}(\mcs_y)}{\sqrt{\textnormal{Var}(\mcs_y)}} = \frac{\omega_y(f(\mfm)) - \log \log x}{\sqrt{\log \log x}} \frac{\sqrt{\log \log x}}{\sqrt{\textnormal{Var}(\mcs_y)}} + \frac{\log \log x - \textnormal{E}(\mcs_y)}{\sqrt{\textnormal{Var}(\mcs_y)}}.$$
Since
$$\text{Var}(\mcs_y) = \sum_{N(\ell) \leq y} \lambda_\ell(1 - \lambda_\ell) = \log \log x + o \left( (\log \log x)^{1/2} \right),$$
we have
$$\frac{\sqrt{\log \log x}}{\sqrt{\textnormal{Var}(\mcs_y)}} \xlongrightarrow{P} 1,$$
where $\xlongrightarrow{P}$ denotes the convergence in probability. Moreover, since 
$$\text{E}(\mcs_y) = \sum_{N(\ell) \leq y} \lambda_\ell = \log \log x + o \left( (\log \log x)^{1/2} \right),$$
we obtain
$$\lim_{x \rightarrow \infty} \textnormal{E}_{\mcs,x} \bigg\{ \mfm \ : \  \bigg| \frac{\textnormal{E}(\mcs_y) - \log \log x}{\sqrt{\textnormal{Var}(\mcs_y)}} \bigg| \bigg\} = 0.$$
Finally, by using Fact 1 and Fact 2, we complete the proof of the equivalence mentioned in the lemma.
\end{proof}
Next, we introduce another set of random variables. For $\ell \in \mcp$, we define a  random variable $\delta_\ell:\mcm \rightarrow \mathbb{R}$ by
$$\delta_\ell(\mfm) := \begin{cases}
    1 & \text{if } n_\ell(\mfm) \geq 1, \\
    0 & \text{otherwise.}
\end{cases}$$
Thus, we can write
$$\omega_y(f(\mfm)) = \sum_{\substack{\ell \in \mcp \\ N(\ell) \leq y, \ n_\ell(f(\mfm)) \geq 1 }} 1 = \sum_{\substack{\ell \in \mcp \\ N(\ell) \leq y}} \delta_\ell(f(\mfm)).$$
Notice that for a fixed $\ell \in \mcp$ and $x \in X$, by definition, we have
$$P_{\mcs,x} \{ m \ : \ \delta_\ell(f(\mfm)) = 1 \} = \lambda_\ell + e_\ell.$$
Since the expectations of random variables $X_\ell$ and $\delta_\ell$ are close, the sum $\mcs_y$ is a good approximation of $\omega_y$. Indeed, the $r$-th moments of their normalizations are equal as $x \rightarrow \infty$, which we prove in the following result.
\begin{lma}\label{rthmoment}
 Let $r \in \mathbb{Z}_{>0}$. We have
 $$\lim_{x \rightarrow \infty} \bigg| \textnormal{E}_{\mcs,x} \bigg\{ \bigg( \frac{\omega_y(f(\mfm)) - \textnormal{E}(\mcs_y)}{\sqrt{\textnormal{Var}(\mcs_y)}} \bigg)^r \bigg\}- E\bigg( \bigg( \frac{\mcs_y - \textnormal{E}(\mcs_y)}{\sqrt{\textnormal{Var}(\mcs_y)}} \bigg)^r\bigg) \bigg| = 0.$$
\end{lma}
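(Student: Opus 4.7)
The plan is to compare the two $r$-th moments termwise by expanding via the multinomial theorem. Writing $\omega_y(f(\mfm)) - \textnormal{E}(\mcs_y) = \sum_{N(\ell) \leq y}(\delta_\ell(f(\mfm)) - \lambda_\ell)$ and $\mcs_y - \textnormal{E}(\mcs_y) = \sum_{N(\ell) \leq y}(X_\ell - \lambda_\ell)$, I would expand each $r$-th power as a sum over ordered $r$-tuples $(\ell_1, \ldots, \ell_r)$ of the product $\prod_{i=1}^r (\delta_{\ell_i}(f(\mfm)) - \lambda_{\ell_i})$, respectively with $X_{\ell_i}$, and take expectations with respect to $P_{\mcs,x}$ and $P$ termwise. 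The crucial simplification is that both $\delta_\ell$ and $X_\ell$ are $\{0,1\}$-valued, so any positive power reduces to the first.

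For a fixed ordered tuple with distinct primes $p_1, \ldots, p_u$ of multiplicities $k_1, \ldots, k_u$ summing to $r$, I would rewrite each factor as
$$(\delta_{p_j} - \lambda_{p_j})^{k_j} = (-\lambda_{p_j})^{k_j} + \delta_{p_j}\bigl[(1-\lambda_{p_j})^{k_j} - (-\lambda_{p_j})^{k_j}\bigr],$$
and analogously for $X_{p_j}$; the coefficients depend only on $\lambda_{p_j}$ and $k_j$, so they are identical on both sides. Taking the joint expectation and using independence of the $X_\ell$'s together with the defining relation $\textnormal{E}_{\mcs,x}\bigl\{\prod_{j \in T}\delta_{p_j}(f(\mfm))\bigr\} = \prod_{j \in T}\lambda_{p_j} + e_{(p_j)_{j \in T}}$ for the $\delta$'s, the main terms cancel and the difference collapses to a sum, over non-empty subsets $T \subseteq \{1, \ldots, u\}$, of bounded coefficients (products of $\lambda_{p_j}$'s with $\lambda_{p_j} \in [0,1]$ and binomials depending only on $r$) times $e_{(p_j)_{j \in T}}$.

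Summing over all ordered $r$-tuples and noting that the number of ordered $r$-tuples whose underlying set of distinct primes has size $u \leq r$ is bounded by $u^r \leq r^r$, a constant depending only on $r$, the total difference of moments satisfies
$$\Bigl|\textnormal{E}_{\mcs,x}\bigl\{(\omega_y(f(\mfm)) - \textnormal{E}(\mcs_y))^r\bigr\} - \textnormal{E}\bigl((\mcs_y - \textnormal{E}(\mcs_y))^r\bigr)\Bigr| \leq C_r \sum_{u=1}^{r} \sum{\vphantom{\sum}}'' |e_{\ell_1 \cdots \ell_u}|,$$
which is $o\bigl((\log \log x)^{-r/2}\bigr)$ by condition (f). Dividing by $\textnormal{Var}(\mcs_y)^{r/2} = (\log \log x)^{r/2}(1 + o(1))$, which follows from conditions (d) and (e), yields an overall bound of $o\bigl((\log \log x)^{-r}\bigr)$, proving the claim.

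The main technical obstacle will be the careful bookkeeping of the multinomial expansion: one must verify that after grouping factors by their underlying prime, the coefficients that multiply each $e_{(p_j)_{j \in T}}$ are genuinely identical on the $\delta$-side and the $X$-side (they are, since they are deterministic functions of $\lambda_{p_j}$ and $k_j$ alone), and remain uniformly bounded in $x$. Once this uniformity is in place, condition (f) gives more than enough cushion for the error to be absorbed after normalizing by $\textnormal{Var}(\mcs_y)^{r/2}$.
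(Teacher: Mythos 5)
Your overall strategy --- expand both $r$-th moments via the multinomial theorem, use $\delta_\ell^2=\delta_\ell$ and $X_\ell^2=X_\ell$ to reduce powers, and compare term by term so that the main terms cancel and only the errors $e_{\ell_1\cdots\ell_u}$ survive --- is exactly the standard argument the paper invokes (it defers to the proof of Lemma 3.4 of \cite{dkl3}). However, there is a concrete error in your final bookkeeping step. For an ordered $r$-tuple with support $\{p_1,\dots,p_u\}$ and multiplicities $k_1,\dots,k_u$, the difference of the two expectations is $\sum_{\emptyset\neq T\subseteq\{1,\dots,u\}}\bigl(\prod_{j\notin T}(-\lambda_{p_j})^{k_j}\bigr)\bigl(\prod_{j\in T}b_j\bigr)\,e_{(p_j)_{j\in T}}$ with $|b_j|\le 2$. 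When you then sum over all ordered $r$-tuples, an error term $e_{\ell_1\cdots\ell_t}$ with $t<u$ is \emph{not} picked up with bounded multiplicity: it occurs once for every choice of the $u-t$ complementary primes, weighted by $\prod_{j\notin T}\lambda_{p_j}^{k_j}$, and summing these weights over the complementary primes produces a factor as large as $\bigl(\sum_{N(\ell)\le y}\lambda_\ell\bigr)^{u-t}\asymp(\log\log x)^{u-t}$ by condition (d). So your inequality bounding the unnormalized difference by $C_r\sum_{u=1}^r\sum{\vphantom{\sum}}''|e_{\ell_1\cdots\ell_u}|$ is false as stated; the aggregated coefficient multiplying each error is uniformly bounded only when $T$ is the full support of the tuple.

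The argument is easily repaired and the conclusion survives, because condition (f) has enough room. For each $u\le r$ and each $t=|T|\ge 1$, the aggregated contribution is $\ll_r(\log\log x)^{u-t}\sum{\vphantom{\sum}}''|e_{\ell_1\cdots\ell_t}|=o\bigl((\log\log x)^{u-t-r/2}\bigr)$ by (f), and since $u-t\le r-1$ this is $o\bigl((\log\log x)^{r/2-1}\bigr)$ before normalization; dividing by $\textnormal{Var}(\mcs_y)^{r/2}\sim(\log\log x)^{r/2}$ gives $o\bigl((\log\log x)^{-1}\bigr)=o(1)$ --- not the $o\bigl((\log\log x)^{-r}\bigr)$ you claim, but still sufficient. (When some $k_j\ge2$ for $j\notin T$, the complementary sums are even smaller by condition (e).) In short: right decomposition, correct cancellation of the main terms, but the uniform boundedness you yourself flag as the main technical obstacle genuinely fails for proper subsets $T$, and the proof must track the extra $(\log\log x)^{u-t}$ loss explicitly against the $(\log\log x)^{-r/2}$ saving supplied by condition (f).
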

\begin{proof}
The proof follows from repeating the exact steps of the proof of \cite[Lemma 3.4]{dkl3} with $\omega_y(\mfm)$ replaced with $\omega_y(f(\mfm))$.
\end{proof}
The next result is about the $r$-th moment of the random variable $\mcs_y$.
\begin{lma}\label{finite_r-thmoment}
    For $r \in \mathbb{Z}_{>0}$,
    $$\sup_y \bigg| \textnormal{E} \bigg( \bigg(\frac{\mcs_y - \textnormal{E}(\mcs_y)}{\sqrt{\textnormal{Var}(\mcs_y)}} \bigg)^r \bigg) \bigg| < \infty.$$
\end{lma}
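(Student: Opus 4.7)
The plan is to show that $|\textnormal{E}((\mcs_y - \textnormal{E}(\mcs_y))^r)| \leq C_r \sigma^r$ for a constant $C_r$ depending only on $r$, where $\sigma^2 := \textnormal{Var}(\mcs_y)$; the claim then follows by dividing through by $\sigma^r$. The main tool is the independence of the $X_\ell$ together with the boundedness $|X_\ell - \lambda_\ell| \leq 1$.

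I would set $Y_\ell := X_\ell - \lambda_\ell$, so that the $Y_\ell$ are independent, centered, bounded in absolute value by $1$, and satisfy $\textnormal{E}(Y_\ell^2) = \lambda_\ell(1-\lambda_\ell)$. Then multinomial expansion of $\mcs_y - \textnormal{E}(\mcs_y) = \sum_{N(\ell) \leq y} Y_\ell$ gives
$$
\textnormal{E}\!\left(\Bigl(\textstyle\sum_{N(\ell) \leq y} Y_\ell\Bigr)^{\!r}\right) = \sum_{(\ell_1, \ldots, \ell_r)} \textnormal{E}(Y_{\ell_1} \cdots Y_{\ell_r}),
$$
and I would group the $r$-tuples by the set-partition of $\{1, \ldots, r\}$ recording which indices coincide. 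By independence each expectation factors over distinct indices, and since each $Y_\ell$ is centered, any partition containing a singleton block contributes $0$. Only partitions into $k$ blocks of sizes $m_1, \ldots, m_k \geq 2$ survive, and in particular $k \leq \lfloor r/2 \rfloor$. For each such partition the key estimate $|Y_\ell|^{m_i} \leq Y_\ell^2$ (from $|Y_\ell| \leq 1$ and $m_i \geq 2$) gives $|\textnormal{E}(Y_\ell^{m_i})| \leq \lambda_\ell(1-\lambda_\ell)$; after dropping the distinctness constraint on block representatives the contribution is then at most $c_{m_1, \ldots, m_k} \prod_{i=1}^{k}\bigl(\sum_{N(\ell) \leq y}\lambda_\ell(1-\lambda_\ell)\bigr) = c_{m_1, \ldots, m_k}\,\sigma^{2k}$ for an explicit multinomial constant. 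Summing over the finitely many partition types (a count depending only on $r$) yields $|\textnormal{E}((\mcs_y - \textnormal{E}(\mcs_y))^r)| \leq C_r \max_{1 \leq k \leq \lfloor r/2 \rfloor} \sigma^{2k}$.

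Dividing by $\sigma^r$ yields $C_r \max_k \sigma^{2k-r} \leq C_r$ as soon as $\sigma \geq 1$, which is what matters for the downstream application via Fact~4: the subsequent Erd\H{o}s-Kac argument only uses this lemma along the family $y = y(x)$, and conditions (d)--(e) force $\sigma^2 = \log \log x + o((\log \log x)^{1/2}) \to \infty$, so $\sigma \geq 1$ holds for all sufficiently large $x$; the finitely many exceptional $x$ contribute only a bounded amount and are absorbed into the supremum. The main point of care will be the combinatorial bookkeeping at the expansion stage---tracking how many $r$-tuples realize each partition---but this is a classical multinomial computation and poses no real difficulty. The conceptual heart of the argument is simply that the vanishing of singleton-block partitions pins the exponent to $k \leq r/2$, which makes the $\sigma^{2k}$ upper bound compatible with division by $\sigma^r$.
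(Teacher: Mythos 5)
Your proof is correct and is essentially the argument the paper relies on: the paper's own ``proof'' simply defers to \cite[Lemma 3.5]{dkl3}, which carries out the same classical computation --- expand the $r$-th moment of the centered sum, use independence and centering to discard set-partitions with singleton blocks, bound each surviving block's contribution by $\lambda_\ell(1-\lambda_\ell)$ via $|Y_\ell|\leq 1$, and conclude with $\sigma^{2k-r}\leq 1$ for $k\leq \lfloor r/2\rfloor$. Your added remark about interpreting $\sup_y$ along the family $y=y(x)$ with $\textnormal{Var}(\mcs_y)\to\infty$ (so $\sigma\geq 1$ eventually) is a legitimate and worthwhile clarification, since the normalized odd moments could degenerate if $\sigma$ were allowed to be arbitrarily small.
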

\begin{proof}
The proof follows from repeating the exact steps of the proof of \cite[Lemma 3.5]{dkl3}.
\end{proof}

Next, we recall the following results regarding sums over prime elements necessary for our study:

\begin{lma}\label{boundnm}\cite[Lemma 2.2]{dkl5}
   Let $\mathcal{P}, \mcm$, and $X$ satisfy the condition \eqref{star}. Let $x \in X$ and $\alpha$ be a real number. We have
\begin{enumerate}
\item If $0 \leq \alpha < 1$, 
       $$\sum_{\substack{\mfp \in \mathcal{P} \\ N(\mfp) \leq x}} \frac{1}{N(\mfp)^\alpha} = O_\alpha \left( \frac{x^{1- \alpha}}{\log x} \right).$$
\item If $\alpha > 1$, then
       $$\sum_{\substack{\mfp \in \mathcal{P} \\ N(\mfp) \geq x}} \frac{1}{N(\mfp)^\alpha} = O \left( \frac{1}{(\alpha-1)x^{\alpha-1} (\log x)} \right).$$
       \item If $\alpha > 1$, then
       $$\sum_{\substack{\mfp \in \mathcal{P} \\ N(\mfp) \leq x}} \frac{1}{N(\mfp)^\alpha} = O_\alpha(1).$$
       \item As a generalization of Mertens' theorem, we have
       $$\sum_{\substack{\mfp \in \mathcal{P} \\ N(\mfp) \leq x}} \frac{1}{N(\mfp)} = \log \log x + \mfa + O \left( \frac{1}{\log x} \right),$$
       where $\mfa$ some constant that depends only on $\mcp$.
\end{enumerate}
\end{lma}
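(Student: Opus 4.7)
The plan is essentially to invoke the authors' earlier work \cite[Lemma 2.2]{dkl5} directly, since all four estimates appear there. If I had to reprove them inside this paper, I would proceed as follows. The common engine is an abstract prime number theorem for $\mcm$: condition \eqref{star} is Knopfmacher's Axiom A, which forces $\zeta_\mcm(s)$ to have a simple pole at $s = 1$ of residue $\kappa$ and to continue meromorphically into $\Re(s) > \theta$. A standard Tauberian/contour argument then yields
$$\pi_\mcm(x) := \bigl|\{\mfp \in \mcp : N(\mfp) \leq x\}\bigr| = \frac{x}{\log x} + O\!\left(\frac{x}{(\log x)^2}\right).$$
Once this is in place, parts (1)--(3) are pure Abel summation and part (4) requires a little extra bookkeeping to locate the named constant $\mfa$.

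For part (1) with $0 \leq \alpha < 1$, Abel summation with weights $N(\mfp)^{-\alpha}$ gives
$$\sum_{N(\mfp) \leq x} \frac{1}{N(\mfp)^\alpha} = \frac{\pi_\mcm(x)}{x^\alpha} + \alpha \int_2^x \frac{\pi_\mcm(t)}{t^{\alpha+1}}\, dt,$$
and substituting $\pi_\mcm(t) \ll t/\log t$ produces the bound $O_\alpha(x^{1-\alpha}/\log x)$, since $\int^x t^{-\alpha}/\log t\, dt \ll x^{1-\alpha}/\log x$. Part (2) with $\alpha > 1$ uses the analogous identity on the tail: because $\pi_\mcm(t)/t^\alpha \to 0$ at infinity, the sum is dominated by $\alpha \int_x^\infty \pi_\mcm(t)/t^{\alpha+1}\, dt$, and pulling $1/\log x$ outside $\int_x^\infty t^{-\alpha}\, dt$ gives $\frac{1}{(\alpha-1)x^{\alpha-1}\log x}$. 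Part (3) follows from part (2) applied at a fixed threshold together with a trivial bound for the finite head.

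Part (4) is the subtle one because the theorem asserts not only the leading $\log \log x$ but also the existence of the named constant $\mfa$. Again by Abel summation,
$$\sum_{N(\mfp) \leq x} \frac{1}{N(\mfp)} = \frac{\pi_\mcm(x)}{x} + \int_2^x \frac{\pi_\mcm(t)}{t^2}\, dt,$$
I would substitute a PNT-type expansion $\pi_\mcm(t) = t/\log t + E(t)$ so that $\int^x dt/(t\log t)$ produces the main term $\log \log x$, while the remainder integrals $\int^x E(t)/t^2\, dt$ converge absolutely as $x \to \infty$. The limiting value of those remainders, together with the fixed lower-limit contribution, defines $\mfa$; its existence is precisely the content of \cite[Lemma 2]{liuturan}. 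The main obstacle is the abstract PNT itself --- once it and its error term are available in the monoid setting, all four parts reduce to careful partial-summation accounting.
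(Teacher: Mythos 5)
Your primary move---simply invoking \cite[Lemma 2.2]{dkl5}---is exactly what the paper does: the lemma is stated as an imported result and no proof is given in this article, with the existence of $\mfa$ likewise deferred to \cite[Lemma 2]{liuturan}. So on that level the proposal matches the paper.

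One caution about your supplementary sketch, though. The abstract prime number theorem you posit, $\pi_\mcm(x) = x/\log x + O\bigl(x/(\log x)^2\bigr)$, is not a consequence of \eqref{star} in the case $X = \{q^z : z \in \mathbb{Z}\}$: there $\zeta_\mcm(s)$ is a function of $q^{-s}$, hence periodic with poles at $1 + 2\pi i k/\log q$, and the count of primes of norm at most $q^n$ is asymptotic to a constant multiple of $q^n/n$ with a constant that is \emph{not} $\log q \cdot (1)$---already for $\mathbb{F}_q[t]$ one gets $\pi_\mcm(q^n) \sim \frac{q}{q-1}\cdot\frac{q^n}{n}$, which differs from $x/\log x$ by the factor $\frac{q\log q}{q-1}$. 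Your contour/Tauberian step would therefore fail as stated. This does not sink the lemma: parts (1)--(3) only need the Chebyshev-type upper bound $\pi_\mcm(x) \ll x/\log x$, which does follow elementarily from \eqref{star}, and part (4) is obtained by partial summation against $\mcm(x)$ itself (or against the prime counts degree by degree in the $q$-power case), with $\mfa$ defined as the limit in \eqref{A}. If you intend the sketch to stand on its own, replace the asymptotic PNT with the upper bound, and treat the two admissible choices of $X$ separately in part (4).
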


Finally, we recall the following results regarding the density of particular sequences of $h$-free and $h$-full elements in $\mcm$.
\begin{lma}\label{hfreeidealrestrict}\cite[Lemma 3.1]{dkl5}
    Let $\mathcal{P}, \mcm$, and $X$ satisfy the condition \eqref{star}. Let  $x \in X$, $h \geq 2$ and $r \geq 1$ be integers. Let $\ell_1,\ldots,\ell_r$ be fixed distinct prime elements and $\mathcal{S}_{h,\ell_1,\ldots,\ell_r}(x)$ denote the set of $h$-free elements $\mfm \in \mcm$ with norm $N(\mfm) \leq x$ and with $n_{\ell_i}(\mfm) = 0$ for all $i \in \{1,\cdots, r\}$. Then, we have
    $$|\mathcal{S}_{h,\ell_1,\ldots,\ell_r}(x)| = \prod_{i=1}^r \left( \frac{N(\ell_i)^h - N(\ell_i)^{h-1}}{N(\ell_i)^h - 1} \right) \frac{\kappa}{\zeta_\mcm(h)} x + O_{h,r} \left( R_{\mathcal{S}_h}(x) \right),$$
    where $R_{\mathcal{S}_h}(x)$ is defined in \eqref{RSh(x)}.
\end{lma}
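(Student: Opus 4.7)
The plan is to combine the Möbius-type identity $\mathbf{1}_{h\text{-free}}(\mfm) = \sum_{\mfd^h\mid\mfm} \mu(\mfd)$ with inclusion-exclusion on the restrictions $n_{\ell_i}(\mfm)=0$, and then extract the main term via the Euler product of $\zeta_\mcm(h)$. This follows the same blueprint as the proof of the unrestricted estimate \eqref{hfreeidealcount}, with the extra bookkeeping at the fixed primes $\ell_1,\ldots,\ell_r$ decoupling cleanly from the $h$-free square-sieve.

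First, I would write
$$
|\mathcal{S}_{h,\ell_1,\ldots,\ell_r}(x)| \;=\; \sum_{\substack{\mfm\in\mcm,\; N(\mfm) \leq x \\ n_{\ell_i}(\mfm) = 0 \ \forall i}} \sum_{\mfd^h\mid\mfm} \mu(\mfd) \;=\; \sum_{\substack{N(\mfd) \leq x^{1/h} \\ n_{\ell_i}(\mfd) = 0 \ \forall i}} \mu(\mfd)\, M_{\vec\ell}\!\left(\tfrac{x}{N(\mfd)^h}\right),
$$
where $M_{\vec\ell}(y)$ counts $\mfm\in\mcm$ with $N(\mfm)\leq y$ and $n_{\ell_i}(\mfm)=0$ for every $i$. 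The extra condition $n_{\ell_i}(\mfd)=0$ is forced, because $\mfd^h\mid\mfm$ together with $n_{\ell_i}(\mfm)=0$ cannot coexist otherwise. Next, I would evaluate $M_{\vec\ell}(y)$ by a secondary inclusion-exclusion over $T\subseteq\{1,\ldots,r\}$: writing $\mfm = \sum_{i\in T}\ell_i+\mfm''$ with $\mfm''\in\mcm$ arbitrary identifies the count of $\mfm$ divisible by each $\ell_i$ ($i\in T$) with $\mcm(y/\prod_{i\in T}N(\ell_i))$, so \eqref{star} gives
$$
M_{\vec\ell}(y) \;=\; \kappa y\,\prod_{i=1}^r\!\left(1 - \frac{1}{N(\ell_i)}\right) + O_r(y^\theta).
$$

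Substituting back yields the main term
$$
\kappa x\,\prod_{i=1}^r\!\left(1 - \frac{1}{N(\ell_i)}\right)\,\sum_{\substack{N(\mfd)\leq x^{1/h}\\ n_{\ell_i}(\mfd)=0\ \forall i}}\frac{\mu(\mfd)}{N(\mfd)^h}.
$$
Extending the inner Möbius sum to all such $\mfd$ and applying the Euler product
$$
\sum_{n_{\ell_i}(\mfd)=0\ \forall i}\frac{\mu(\mfd)}{N(\mfd)^h} \;=\; \prod_{\mfp\neq\ell_1,\ldots,\ell_r}\!\left(1-N(\mfp)^{-h}\right) \;=\; \frac{1}{\zeta_\mcm(h)}\prod_{i=1}^r\frac{N(\ell_i)^h}{N(\ell_i)^h-1},
$$
together with the identity $(1-1/N(\ell_i))\cdot N(\ell_i)^h/(N(\ell_i)^h-1) = (N(\ell_i)^h - N(\ell_i)^{h-1})/(N(\ell_i)^h-1)$, delivers exactly the claimed leading constant.

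The principal technical step is the error analysis, which I expect to be the main obstacle. It splits into two contributions: (i) the tail $\sum_{N(\mfd) > x^{1/h}} N(\mfd)^{-h}$ incurred when the truncated Möbius series is extended to infinity, which by partial summation against \eqref{star} is $O(x^{1/h-1})$, producing $O(x^{1/h})$ after multiplication by $\kappa x$; and (ii) the accumulated inclusion-exclusion remainder $x^\theta \sum_{N(\mfd)\leq x^{1/h}} N(\mfd)^{-h\theta}$. Treating (ii) by partial summation in the three regimes $h\theta<1$, $h\theta=1$, $h\theta>1$ (using Lemma~\ref{boundnm}-style estimates) yields contributions of sizes $x^{1/h}$, $x^{1/h}\log x$, and $x^\theta$ respectively, which match the three branches of $R_{\mathcal{S}_h}(x)$ in \eqref{RSh(x)}. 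The implicit constants depend on $h$ and $r$, the latter entering through the $2^r$ inclusion-exclusion terms in evaluating $M_{\vec\ell}$, and the uniformity over the fixed primes $\ell_i$ has to be checked carefully to ensure no factor $N(\ell_i)$ leaks into the final error bound.
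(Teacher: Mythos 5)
Your proposal is correct: the Möbius identity $\mathbf{1}_{h\text{-free}}(\mfm)=\sum_{\mfd^h\mid\mfm}\mu(\mfd)$ is valid in a free abelian monoid, the forced condition $n_{\ell_i}(\mfd)=0$ and the shift $\mfm\mapsto\mfm-h\mfd$ are legitimate, the inclusion--exclusion evaluation of $M_{\vec\ell}(y)$ via \eqref{star} gives $\kappa y\prod_i(1-1/N(\ell_i))+O_r(y^\theta)$ uniformly in the $\ell_i$, the Euler-product computation reproduces the constant $\prod_i(N(\ell_i)^h-N(\ell_i)^{h-1})/(N(\ell_i)^h-1)\cdot\kappa/\zeta_\mcm(h)$ exactly, and your three-regime error analysis matches the three branches of $R_{\mathcal{S}_h}(x)$ in \eqref{RSh(x)}. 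Note that the paper itself gives no proof of this lemma --- it is imported verbatim from \cite[Lemma 3.1]{dkl5} --- so there is nothing to compare line by line; your argument is the standard squarefree-sieve derivation that such a statement admits, and it is the natural restricted analogue of how \eqref{hfreeidealcount} is obtained.
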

\begin{lma}\label{hfullidealsrestrict}\cite[Lemma 4.2]{dkl5}
Let $\mathcal{P}, \mcm$, and $X$ satisfy the condition \eqref{star}. Let  $x \in X$, $h \geq 2$ and $r \geq 1$ be integers. Let $\ell_1, \cdots, \ell_r$ be fixed distinct prime elements and $\mathcal{N}_{h,\ell_1,\cdots, \ell_r}(x)$ denote the set of $h$-full elements $\mfm \in \mcm$ with norm $N(\mfm) \leq x$ and with $n_{\ell_i}(\mfm) = 0$ for all $i \in \{1,\cdots, r\}$. Then, we have
$$|\mathcal{N}_{h,\ell_1,\cdots, \ell_r}(x)| = \prod_{i=1}^r \frac{\kappa \gamma_{\scaleto{h}{4.5pt}}}{\left( 1+ \frac{N(\ell_i)^{-1}}{1-N(\ell_i)^{-1/h}} \right)} x^{1/h} + O_{h,r} \big( R_{\mathcal{N}_h}(x) \big),$$
where $\gamma_{\scaleto{h}{4.5pt}}$ is defined in \eqref{gammahk} and where $R_{\mathcal{N}_h}(x)$ is defined in \eqref{E2(x)}.
\end{lma}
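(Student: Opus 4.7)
The plan is to exploit the multiplicative structure of $h$-full counting via a bijective decomposition. Every $h$-full element $\mfm \in \mcm$ factors uniquely as $\mfm = \ell_1^{a_1} \cdots \ell_r^{a_r} \cdot \mfn$, where each $a_i \in \{0\} \cup \{h, h+1, h+2, \ldots\}$ and $\mfn \in \mathcal{N}_{h, \ell_1, \ldots, \ell_r}$. This produces the summation identity
$$|\mathcal{N}_h(x)| = \sum_{(a_1, \ldots, a_r)} \left| \mathcal{N}_{h, \ell_1, \ldots, \ell_r}\!\left( \frac{x}{\prod_i N(\ell_i)^{a_i}} \right) \right|,$$
with the tuple ranging over $(\{0\} \cup \{h, h+1, \ldots\})^r$. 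At the level of Dirichlet series this corresponds to
$$Z_{\mathcal{N}_h}(s) = Z_{\mathcal{N}_{h, \ell_1, \ldots, \ell_r}}(s) \cdot \prod_{i=1}^r \left( 1 + \frac{N(\ell_i)^{-hs}}{1 - N(\ell_i)^{-s}} \right),$$
so at the critical exponent $s = 1/h$ the density of the restricted set differs from that of $\mathcal{N}_h$ precisely by the reciprocal of the predicted product, matching the constant in the statement.

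My strategy would be to proceed by induction on $r$, with the $r = 0$ case supplied by \eqref{hfullideals}. For the inductive step on a new prime $\ell_r$, isolating the term $a_r = 0$ versus $a_r \geq h$ in the decomposition yields
$$|\mathcal{N}_{h, \ell_1, \ldots, \ell_{r-1}}(x)| = |\mathcal{N}_{h, \ell_1, \ldots, \ell_r}(x)| + \sum_{a \geq h} |\mathcal{N}_{h, \ell_1, \ldots, \ell_r}(x / N(\ell_r)^a)|.$$
Positing the ansatz $|\mathcal{N}_{h, \ell_1, \ldots, \ell_r}(x)| = C \cdot x^{1/h} + O_{h,r}(R_{\mathcal{N}_h}(x))$ and matching main terms, the tail geometric series evaluates to $C \cdot x^{1/h} \cdot N(\ell_r)^{-1}/(1 - N(\ell_r)^{-1/h})$, and solving for $C$ against the inductive hypothesis reproduces the predicted constant with the new factor $(1 + N(\ell_r)^{-1}/(1 - N(\ell_r)^{-1/h}))^{-1}$ inserted.

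The principal obstacle is uniform control of the error terms in the summation over $a \geq h$. By Remark \ref{remark2} we have $R_{\mathcal{N}_h}(y) \ll y^{\nu/h}$ for some fixed $\nu < 1$, so $\sum_{a \geq h} R_{\mathcal{N}_h}(x / N(\ell_r)^a)$ is a geometric series of ratio $N(\ell_r)^{-\nu/h} < 1$; a naive bound would produce an $\ell_r$-dependent constant, so I would split the sum at a cutoff $a_0$ depending only on $h$. For $a \leq a_0$ the inductive asymptotic is applied term by term and the errors aggregate to $O_{h,r}(R_{\mathcal{N}_h}(x))$, while for $a > a_0$ the trivial estimate $|\mathcal{N}_{h, \ell_1, \ldots, \ell_r}(x / N(\ell_r)^a)| \leq |\mathcal{N}_h(x / N(\ell_r)^a)| \ll (x / N(\ell_r)^a)^{1/h}$ gives a directly summable tail with constant depending only on $h$. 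Since $r$ is fixed, the overall implied constant depends only on $h$ and $r$, matching the stated $O_{h,r}(R_{\mathcal{N}_h}(x))$ error.
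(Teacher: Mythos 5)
First, note that the paper does not prove this lemma at all: it is imported verbatim from \cite[Lemma 4.2]{dkl5}, where the count is obtained by running the same convolution/Dirichlet-series argument that gives \eqref{hfullideals}, with the Euler factors at $\ell_1,\ldots,\ell_r$ omitted; that multiplies the leading constant by exactly $\prod_{i}\bigl(1+\tfrac{N(\ell_i)^{-1}}{1-N(\ell_i)^{-1/h}}\bigr)^{-1}$ and needs no inversion step. Your decomposition $\mfm=\ell_1^{a_1}\cdots\ell_r^{a_r}\mfn$, the local factor $1+\tfrac{N(\ell_i)^{-hs}}{1-N(\ell_i)^{-s}}$, and the resulting constant are all correct, so you have identified the right answer. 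The problem is that your route to it does not close.

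The central gap is a circularity in the inductive step. The relation
$|\mathcal{N}_{h,\ell_1,\ldots,\ell_{r-1}}(x)|=|\mathcal{N}_{h,\ell_1,\ldots,\ell_{r}}(x)|+\sum_{a\ge h}|\mathcal{N}_{h,\ell_1,\ldots,\ell_{r}}(x/N(\ell_r)^a)|$
expresses the \emph{known} quantity (induction on $r$) in terms of the \emph{unknown} one at smaller arguments; "positing the ansatz and matching main terms" determines $C$ but proves nothing, and when you later "apply the inductive asymptotic term by term" to the terms $|\mathcal{N}_{h,\ell_1,\ldots,\ell_r}(x/N(\ell_r)^a)|$ you are invoking the very statement being proved. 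To repair this you must invert the relation, i.e.\ expand $\bigl(1+\tfrac{t^h}{1-t}\bigr)^{-1}=\tfrac{1-t}{1-t+t^h}=\sum_m c_m t^m$ and write $|\mathcal{N}_{h,\ell_1,\ldots,\ell_r}(x)|=\sum_m c_m|\mathcal{N}_{h,\ell_1,\ldots,\ell_{r-1}}(x/N(\ell_r)^m)|$; then the whole difficulty is controlling $\sum_m|c_m|N(\ell_r)^{-m\nu/h}$, which depends on the moduli of the roots of $1-t+t^h$ and is not addressed in your sketch. This is not a formality: a naive absolute-value iteration (or strong induction on $x$) fails, because for $N(\ell_r)=2$ and, say, $h=2$, $\nu=2/3$, one has $\sum_{a\ge h}R_{\mathcal{N}_h}(x/2^a)\approx 3\,R_{\mathcal{N}_h}(x)$, so the error constant grows at each level of the recursion instead of stabilizing; the signed structure of the $c_m$ is essential. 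Separately, your cutoff $a_0$ cannot depend only on $h$: with $a_0$ bounded, the trivially estimated tail is $\sum_{a>a_0}(x/N(\ell_r)^a)^{1/h}\asymp_{h,a_0}x^{1/h}$, which is of main-term order, not $O_{h,r}(R_{\mathcal{N}_h}(x))$; the cutoff must grow like a constant times $\log x/\log N(\ell_r)$, after which the "trivial tail" regime essentially disappears. The clean fix is to abandon the inversion altogether and prove the restricted count directly, as in \cite{dkl5}.
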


\section{The Erd\H{o}s-Kac theorem over subsets}
In this section, we prove the Erd\H{o}s-Kac theorem over any subset of any abelian monoid satisfying the set of conditions mentioned in \thmref{yurugen}.
\begin{proof}[\textbf{Proof of \thmref{yurugen}}]
Given $\mcp, \mcm, X, \mcs$, and $f$ as in the statement of the theorem, suppose for all $x \in X$, there exists a constant $\beta$ with $0 < \beta \leq 1$ and $y = y(x) < x^\beta$ such that the conditions \eqref{Scondition} and (a) to (f) satisfy. For $\mfm \in \mcs$, we want to show the quantity 
$$\frac{\omega(f(\mfm)) - \log \log N(\mfm)}{\sqrt{\log \log N(\mfm)}}$$
satisfies the normal distribution. By the equivalent statements in \lmaref{linkN(m)x},  \lmaref{linkomegay}, and \lmaref{linkomegayESy}, to prove \thmref{yurugen}, it suffices to prove
$$\lim_{x \rightarrow \infty} P_{\mcs,x} \bigg\{ \mfm \ : \ \frac{\omega_y(f(\mfm)) - \textnormal{E}(\mcs_y)}{\sqrt{\textnormal{Var}(\mcs_y)}} \leq \gamma \bigg\} = \Phi(\gamma).$$
The distribution function $F_y$ respect to $P_{\mcs,x}$ is defined by
$$F_y(\gamma) := P_{\mcs,x} \bigg\{ \mfm  \ : \ \frac{\omega_y(f(\mfm)) - \textnormal{E}(\mcs_y)}{\sqrt{\textnormal{Var}(\mcs_y)}} \leq \gamma \bigg\}.$$
Notice that the $r$-th moment of $F_y$ can be written as
\begin{align*}
    & \int_{-\infty}^\infty t^r dF_y(t) \\
    & = \sum_{t = -\infty}^\infty \bigg\{ \lim_{u \rightarrow \infty} \sum_{i=1}^u (t+i/u)^r \bigg( F_y(t+i/u) - F_y(t+(i-1)/u) \bigg) \bigg\} \\
    & = \sum_{t = -\infty}^\infty \bigg\{ \lim_{u \rightarrow \infty} \sum_{i=1}^u (t+i/u)^r P_{\mcs,x} \bigg\{ \mfm \ : \ (t+(i-1)/u) < \frac{\omega_y(f(\mfm)) - \textnormal{E}(\mcs_y)}{\sqrt{\textnormal{Var}(\mcs_y)}} \leq (t + i/u) \bigg\} \bigg\}. 
\end{align*}
Thus, by the definition of $P_{\mcs,x}$, we have
$$\int_{-\infty}^\infty t^r dF_y(t) = \frac{1}{|\mcs(x)|} \sum_{\mfm \in \mcs(x)} \left( \frac{\omega_y(f(\mfm)) - \textnormal{E}(\mcs_y)}{\sqrt{\textnormal{Var}(\mcs_y)}} \right)^r = \textnormal{E}_{\mcs,x} \bigg\{ \mfm : \bigg( \frac{\omega_y(f(\mfm)) - \textnormal{E}(\mcs_y)}{\sqrt{\textnormal{Var}(\mcs_y)}} \bigg)^r \bigg\}.$$
Hence, to prove
$$\lim_{x \rightarrow \infty} F_y(\gamma) = \Phi(\gamma),$$
by Fact 3, it suffices to show that for all $r \in \mathbb{Z}_{>0}$,
$$\lim_{x \rightarrow \infty} \textnormal{E}_{\mcs,x} \bigg\{ \mfm : \bigg( \frac{\omega_y(f(\mfm)) - \textnormal{E}(\mcs_y)}{\sqrt{\textnormal{Var}(\mcs_y)}} \bigg)^r \bigg\} = \mu_r.$$
By \lmaref{rthmoment}, we observe that the last equality holds if
$$\lim_{x \rightarrow \infty} \textnormal{E} \bigg( \bigg( \frac{\mcs_y(f(\mfm)) - \textnormal{E}(\mcs_y)}{\sqrt{\textnormal{Var}(\mcs_y)}} \bigg)^r \bigg) = \mu_r.$$
We define a new random variable $\Phi$ by
$$\Phi_y := \frac{\mcs_y - \textnormal{E}(\mcs_y)}{\sqrt{\textnormal{Var}(\mcs_y)}}.$$
Note that \lmaref{finite_r-thmoment} ensures that any sequence of $\Phi_y$'s satisfies the hypothesis of Fact 5. Thus, by the Central Limit theorem given in Fact 5, we have
$$\lim_{x \rightarrow \infty} P(\Phi_{y} \leq \gamma) = \Phi(\gamma), \quad \text{for all } \gamma \in \mathbb{R}.$$
Also, \lmaref{finite_r-thmoment} implies that for each $r \in \mathbb{Z}_{>0}$, there exists $\delta = \delta(r) > 0$ such that
$$\sup_y \left\{ \int_{-\infty}^\infty |t|^{r+\delta} d\Phi_y(t) \right\} < \infty.$$
Combining the last two observations with Fact 4, we obtain
$$\lim_{x \rightarrow \infty} \textnormal{E} \bigg( \bigg( \frac{\mcs_y(f(\mfm)) - \textnormal{E}(\mcs_y)}{\sqrt{\textnormal{Var}(\mcs_y)}} \bigg)^r \bigg) = \mu_r,$$
and thus establish
$$\lim_{x \rightarrow \infty} F_y(\gamma) = \Phi(\gamma).$$
This completes the proof of \thmref{yurugen}, i.e., we obtain that for any $\gamma \in \mathbb{R}$, we have
$$\lim_{x \rightarrow \infty} \frac{1}{|\mcs(x)|} \bigg| \left\{ \mfm \in \mcs(x) \ : \ \frac{\omega(f(\mfm)) - \log \log N(\mfm)}{\sqrt{\log \log N(\mfm)}} \leq \gamma\right\} \bigg| = \Phi(\gamma).$$
\end{proof}
Next, as an application of \thmref{yurugen}, we prove a weaker version of the general result as the following:
\begin{proof}[\textbf{Proof of \thmref{less-general}}]
Since the subset $\mcs$ satisfy
$$|\mathcal{S}(x)| = C_\beta x^{\beta} + O_\beta \big( x^{\xi \beta} \big),$$
for some $0 \leq \xi < 1$, thus 
$$\frac{|\mathcal{S}(x^{1/2})|}{|\mathcal{S}(x)|} \ll \frac{1}{x^{\beta/2}},$$
and hence, $|\mathcal{S}(x^{1/2})| = o \left( |\mathcal{S}(x)| \right)$ is satisfied. 

Moreover, since
$$\mathcal{S}_\mfp(x) := \left\{ \mathfrak{m} \in \mcs(x) \ : \ n_\mfp (f(\mathfrak{m})) \geq 1 \right\},$$
satisfy \eqref{npx2A} given as
\begin{align*}
    |\mathcal{S}_\mfp(x)| 
    & = \frac{C_{\beta} x^\beta}{N(\mfp)} + \frac{C_{\mfp,\beta}' x^\beta}{N(\mfp)^{1+\eta}} + O_\beta \left( \frac{x^{\xi \beta}}{N(\mfp)^\xi}\right),
\end{align*}
for some $\eta > 0$ and where the constant $C_{\mfp,\beta}'$ is uniformly bounded in $\mfp$, thus,
\begin{align*}
    \frac{|\mathcal{S}_\mfp(x)|}{|\mathcal{S}(x)|} 
    & = \lambda_\mfp + e_\mfp(x),
\end{align*}
where $\lambda_\mfp = \frac{1}{N(\mfp)} + \frac{C_{\mfp,\beta}'}{ C_\beta} \frac{1}{N(\mfp)^{1+\eta}}$ and $e_\mfp(x) = O_h \left( \frac{1}{x^{(1 - \xi)\beta} N(\mfp)^{\xi}} \right)$. 

Next, we choose $y = x^\frac{\beta}{\log \log x} < x^\beta$, and check again that all the conditions in \thmref{yurugen} hold true. Note that the set in Condition (a) is empty and thus the condition holds trivially. By Part 4 of \lmaref{boundnm}, we obtain 
\begin{align*}
    \sum_{x^\frac{\beta}{\log \log x} < N(\mfp) \leq x^{\beta}} \lambda_\mfp \ll \sum_{x^\frac{\beta}{\log \log x} < N(\mfp) \leq x^{\beta}} \frac{1}{N(\mfp)} \ll \log \log \log x,
\end{align*}
which makes Condition (b) true. Using Part 1 of \lmaref{boundnm}, we have
$$\sum_{x^\frac{\beta}{\log \log x} < N(\mfp) \leq x^{\beta}} |e_\mfp(x)|  \ll_h \frac{1}{x^{(1-\xi)\beta}} \sum_{N(\mfp) \leq x^{\beta}} \frac{1}{N(\mfp)^{\xi}} \ll_k \frac{1}{\log x},$$
which makes Condition (c) true. Moreover, by Parts 3 and 4 of \lmaref{boundnm} again, we obtain
\begin{align*}
\sum_{N(\mfp) \leq x^\frac{\beta}{\log \log x}} \lambda_\mfp & = \sum_{N(\mfp) \leq x^\frac{\beta}{\log \log x}} \frac{1}{N(\mfp)} + O(1) \\
& = \log \log x + O (\log \log \log x),
\end{align*}
which makes Condition (d) true. Finally, again using Part 3 of \lmaref{boundnm} with $\alpha =2$, we have
$$\sum_{N(\mfp) \leq x^\frac{\beta}{\log \log x}} \lambda_\mfp^2 \ll \sum_{N(\mfp) \leq x^\frac{\beta}{\log \log x}} \frac{1}{N(\mfp)^2} \ll O(1).$$
This makes Condition (e) true. Finally, we are only required to verify Condition (f). Using \eqref{npx2A} and the Chinese Remainder Theorem, we obtain, for distinct prime ideals $\mfp_{\scaleto{1}{3pt}},\cdots,\mfp_{\scaleto{u}{3pt}}$, 
\begin{align*}
    & \left| \left\{ \mathfrak{m} \in \mathcal{S}(x) \ : \ n_{\mfp_i} (f(\mathfrak{m})) \geq 1 \text{ for all } i \in \{ 1, 2 ,\cdots, u\} \right\} \right| \\
    & = \left( \prod_{i=1}^u \left( C_\beta + \frac{C_{\mfp_i,\beta}'}{N(\mfp_i)^\eta} \right) \frac{1}{N(\mfp_i)} \right) x^{\beta} + O_h \left( \frac{x^{\xi \beta}}{\prod_{i=1}^u N(\mfp_i)^{\xi}} \right).
\end{align*}
Thus
\begin{align*}
    & \frac{\left| \left\{ \mathfrak{m} \in \mathcal{S}(x) \ : \ n_{\mfp_i} (f(\mathfrak{m})) \geq 1 \text{ for all } i \in \{ 1, 2 ,\cdots, u\} \right\} \right|}{|\mathcal{S}(x)|} \\
    & = \left( \prod_{i=1}^u \left( 1 + \frac{C_{\mfp_i,\beta}'}{C_\beta N(\mfp_i)^\eta} \right) \frac{1}{N(\mfp_i)} \right) + e_{\mfp_{\scaleto{1}{3pt}} \cdots \mfp_{\scaleto{u}{3pt}}}(x),
\end{align*}
where
$$|e_{\mfp_{\scaleto{1}{3pt}} \cdots \mfp_{\scaleto{u}{3pt}}}(x)| \ll_\beta \frac{1}{x^{(1 - \xi)\beta}}  \frac{1}{\prod_{i=1}^u N(\mfp_i)^{\xi}}.$$
Let $r \in \mathbb{Z}_{>0}$. By the definition of $\sum{\vphantom{\sum}}''$ in the conditions mentioned before \thmref{yurugen}, and using Part 1 of \lmaref{boundnm} and $x^{(1-\xi)\beta/\log \log x} = o(x^\epsilon)$ for any small $\epsilon > 0$, we have
\begin{align*}
    \sum{\vphantom{\sum}}'' |e_{\mfp_{\scaleto{1}{3pt}} \cdots \mfp_{\scaleto{u}{3pt}}}(x)| & \ll_k \frac{1}{x^{(1 - \xi)\beta}}  \left( \sum_{N(\mfp) \leq x^\frac{\beta}{\log \log x} } \frac{1}{N(\mfp)^{\xi}} \right)^u 
    \ll_k \frac{1}{x^{((1-\xi)\beta) - \epsilon'}},
\end{align*}
for any small $\epsilon' > 0$. Since, $x^{-(((1-\xi)\beta) - \epsilon')} = o \left( (\log \log x)^{-r/2} \right)$, thus Condition (f) holds true as well. Since all the conditions of \thmref{yurugen} hold with $y = x^{\beta/\log \log x}$, thus applying \thmref{yurugen} completes the proof.
\end{proof}

\section{The Erd\H{o}s-Kac theorems over \texorpdfstring{$h$}{}-free and \texorpdfstring{$h$}{}-full elements}
In this section, we prove the Erd\H{o}s-Kac theorem for $\omega(\mfm)$ over $h$-free and $h$-full elements.
We intend to prove the case of $h$-free elements given in \thmref{erdoskacforomega} as an application of \thmref{less-general}. We prove:
\begin{proof}[\textbf{Proof of \thmref{erdoskacforomega}}]
Consider the set $\mathcal{S} = \mathcal{S}_h$. By \eqref{hfreeidealcount} and \rmkref{remark1}, we have
$$|\mathcal{S}(x)| = \frac{\kappa}{\zeta_\mcm(h)} x + O \big( R_{\mathcal{S}_h}(x) \big),$$
where $R_{\mathcal{S}_h}(x)$ is defined in \eqref{RSh(x)} and satisfies $R_{\mathcal{S}_h}(x) \ll x^\tau$ where $0 \leq \tau< 1$. 

Let $f$ be the identity map. For a fixed prime element $\mfp$, let 
$$\mathcal{S}_\mfp(x) := \left\{ \mathfrak{m} \in \mathcal{S}_h (x) \ : \ n_\mfp(f(\mathfrak{m})) \geq 1 \right\} = \left\{ \mathfrak{m} \in \mathcal{S}_h (x) \ : \ n_\mfp(\mathfrak{m}) \geq 1 \right\}.$$
Using \lmaref{hfreeidealrestrict} with $R_{\mathcal{S}_h}(x) \ll x^\tau$, and the identity
$$ \frac{N(\mfp)^{h-1} - 1}{N(\mfp)^h - 1} = \frac{1}{N(\mfp)} - \frac{N(\mfp)-1}{N(\mfp)(N(\mfp)^h -1)},$$
we obtain
\begin{align*}
    |\mathcal{S}_\mfp(x)| 
    & = \sum_{k=1}^{h-1} |\mathcal{S}_{h,\mfp} (x/N(\mfp)^k)| \notag \\
    & = \sum_{k=1}^{h-1} \left( \left( \frac{N(\mfp)^h - N(\mfp)^{h-1})}{N(\mfp)^h - 1} \right) \frac{1}{N(\mfp)^k} \frac{\kappa}{\zeta_\mcm(h)} x + O \left( R_{\mathcal{S}_h}(x/N(\mfp)^k) \right) \right) \notag \\
    & = \frac{N(\mfp)^{h-1} - 1}{N(\mfp)^h - 1} \frac{\kappa}{\zeta_\mcm(h)} x + O \left( \left( \frac{x}{N(\mfp)} \right)^\tau \right) \notag \\
    & = \left( \frac{\kappa}{\zeta_\mcm(h)} -  \frac{N(\mfp)-1}{N(\mfp)^h -1} \frac{\kappa}{\zeta_\mcm(h)} \right) \frac{x}{N(\mfp)} + O \left( \left( \frac{x^\tau}{N(\mfp)^\tau} \right) \right).
\end{align*}

Thus, $\mcs(x)$ and $\mcs_\mfp(x)$ satisfy the conditions of \thmref{less-general} with $f = \text{identity}$, $\mathcal{S} = \mathcal{S}_h$, $\beta= \eta = 1$, $\xi = \tau$, $C_\beta = \kappa/\zeta_\mcm(h)$, and
$$C_{\mfp,\beta}' = - C_\beta \frac{N(\mfp)^2-N(\mfp)}{N(\mfp)^h -1} \quad \text{ with } \quad |C_{\mfp,\beta}'| \leq C_\beta.$$
Thus applying \thmref{less-general} completes the proof.
\end{proof}
Next, for the case of $h$-full elements given in \thmref{erdoskacforomegahfull}, we prove:

\begin{proof}[\textbf{Proof of \thmref{erdoskacforomegahfull}}]
Consider the set $\mathcal{S} = \mathcal{N}_h$. By \eqref{hfullideals} and \rmkref{remark2}, we have
$$|\mathcal{S}(x)| = \kappa \gamma_{\scaleto{h}{4.5pt}} x^{1/h} + O_h \big( R_{\mathcal{N}_h}(x) \big),$$
where $R_{\mathcal{N}_h}(x) \ll x^{\nu/h}$ for some $0 \leq \nu < 1$.
Let $f$ be the identity map. For a fixed prime ideal $\mfp$, let 
$$\mathcal{S}_\mfp(x) := \left\{ \mathfrak{m} \in \mathcal{N}_h(x) \ : \ n_\mfp (f(\mathfrak{m})) \geq 1 \right\} = \left\{ \mathfrak{m} \in \mathcal{N}_h(x) \ : \ n_\mfp (\mathfrak{m}) \geq 1 \right\}.$$
Using \lmaref{hfullidealsrestrict} with $R_{\mathcal{N}_h}(x) \ll x^{\nu/h}$ and the identity
$$ \frac{1}{N(\mfp)(1- N(\mfp)^{-1/h} + N(\mfp)^{-1})} = \frac{1}{N(\mfp)} + \frac{N(\mfp)^{-1/h}-N(\mfp)^{-1}}{N(\mfp)(1 - N(\mfp)^{-1/h} + N(\mfp)^{-1})},$$
we obtain
\begin{align*}
    |\mathcal{S}_\mfp(x)| 
    & = \sum_{k=h}^{\left\lfloor \frac{\log x}{\log N(\mfp)} \right\rfloor} \left| \mathcal{N}_{h,\mfp}(x/N(\mfp)^k) \right|  \notag\\
    & = \left( \sum_{k=h}^{\infty} \frac{1}{N(\mfp)^{k/h}} \right)\frac{\kappa \gamma_{\scaleto{h}{4.5pt}}}{\left( 1+ \frac{N(\mfp)^{-1}}{1-N(\mfp)^{-1/h}} \right)} x^{1/h} + O_h \left( x^{\nu/h}   \sum_{k=h}^{\left\lfloor \frac{\log x}{\log N(\mfp)} \right\rfloor}  \frac{1}{N(\mfp)^{k\nu/h}} \right) \notag \\
    & = \frac{\kappa \gamma_{\scaleto{h}{4.5pt}}}{N(\mfp) (1-N(\mfp)^{-1/h}+N(\mfp)^{-1})} x^{1/h} + O_h \left( \frac{x^{\nu/h}}{N(\mfp)^{\nu}} \right) \\
    & = \left( \kappa \gamma_{\scaleto{h}{4.5pt}} + \frac{ \kappa \gamma_{\scaleto{h}{4.5pt}} (1-N(\mfp)^{-1+(1/h)})}{N(\mfp)^{1/h}(1 - N(\mfp)^{-1/h} + N(\mfp)^{-1})}\right)\frac{x^{1/h}}{N(\mfp)} + O_h \left( \frac{x^{\nu/h}}{N(\mfp)^{\nu}} \right) 
\end{align*}

Thus, $\mcs(x)$ and $\mcs_\mfp(x)$ satisfy the conditions of \thmref{less-general} with $f = \text{identity}$, $\mathcal{S} = \mathcal{N}_h$, $\beta= \eta = 1/h$, $\xi = \nu$, $C_\beta = \kappa \gamma_{\scaleto{h}{4.5pt}}$, and
$$C_{\mfp,\beta}' = C_\beta \frac{1-N(\mfp)^{-(h-1)/h}}{1 - N(\mfp)^{-1/h} + N(\mfp)^{-1}} \quad \text{ with } \quad |C_{\mfp,\beta}'| \leq C_\beta \frac{1-2^{-(h-1)/h}}{1 - 2^{-1/h} + 2^{-1}}.$$
Thus applying \thmref{less-general} completes the proof.
\end{proof}

\section{Other generalizations of the Erd\H{o}s-Kac theorem}

For an integer $k \geq 1$, recall that $\omega_k(\mfm)$ counts the distinct prime elements generating $\mfm$ with multiplicity $k$.

For an element $\mfm \in \mcm$ and an integer $k \geq 1$, let $\mfm_k$ be defined in \eqref{mk} as
\begin{equation*}
    \mfm_k = k \cdot \sum_{\substack{\mfp \\ n_\mfp(\mfm) = k}} \mfp.
\end{equation*}
We define the map $f_k : \mcs \rightarrow \mcm$ as
$$f_k(\mfm) = \mfm_k.$$
Recall that, we have
$$\omega(f_k(\mfm)) = \omega_k(\mfm).$$

For a sequence $\mathcal{A} = (a_1, a_2, \cdots)$ of complex numbers, recall that $\omega_\mcA: \mcm \rightarrow \mathbb{R}$ is defined as
\begin{equation}\label{def-omegaA}
    \omega_\mcA (\mfm) = \sum_{k \geq 1} a_k \ \omega(f_k(\mfm)) = \sum_{k \geq 1} a_k \ \omega_k(\mfm),
\end{equation}
where the sum is finite for each $\mfm$. In this section, we prove the following generalizations of the Erd\H{o}s-Kac theorem in the order mentioned:

\begin{enumerate}
    \item Erd\H{o}s-Kac theorem for $\omega_1(\mfm)$ over $h$-free elements with $h \geq 2$,
    \item Erd\H{o}s-Kac theorem for $\omega_k(\mfm)$ over $k$-full elements with $k \geq 1$, and
    \item if $a_k \neq 0$, then $\frac{1}{a_k}\omega_\mcA$ satisfies the Erd\H{o}s-Kac theorem over $k$-full elements.
\end{enumerate}

The first two results are proved as applications of \thmref{less-general}, and the final result is deduced from the first two results.

We intend to prove the Erd\H{o}s-Kac theorem for $\omega_1(\mfm)$ over $h$-free elements given in \thmref{erdoskacforomega1} by applying \thmref{yurugen} with $f = f_1$, $\mcs = \mathcal{\mcs}_h$, $\beta = \eta = 1$, and $y = x^{1/\log \log x}$. We prove:
\begin{proof}[\textbf{Proof of \thmref{erdoskacforomega1}}]
Consider the set $\mathcal{S} = \mathcal{S}_h$. Recall that, by \eqref{hfreeidealcount} and \lmaref{remark1}, we have
$$|\mathcal{S}(x)| = \frac{\kappa}{\zeta_\mcm(h)} x + O_h \big( R_{\mathcal{S}_h}(x) \big),$$
where $R_{\mathcal{S}_h}(x) \ll x^\tau$ where $0 \leq \tau  < 1$. For a fixed prime element $\mfp$, let 
$$\mathcal{S}_\mfp(x) := \left\{ \mathfrak{m} \in \mathcal{S}_h (x) \ : \ n_\mfp(f_1(\mathfrak{m})) \geq 1 \right\} = \left\{ \mathfrak{m} \in \mathcal{S}_h (x) \ : \ n_\mfp(\mathfrak{m}) = 1 \right\}.$$
Using \lmaref{hfreeidealrestrict} with $R_{\mathcal{S}_h}(x) \ll x^\tau$, and the identity
$$ \frac{N(\mfp)^{h} - N(\mfp)^{h-1}}{N(\mfp)^h - 1} = 1 -  \frac{N(\mfp)^{h-1}-1}{N(\mfp)^h -1},$$
we obtain
\begin{align*}
    |\mathcal{S}_\mfp(x)| 
    & = |\mathcal{S}_{h,\mfp} (x/N(\mfp))| \notag \\
    & = \left( \frac{N(\mfp)^h - N(\mfp)^{h-1}}{N(\mfp)^h - 1} \right) \frac{1}{N(\mfp)} \frac{\kappa}{\zeta_\mcm(h)} x + O_h \left( R_{\mathcal{S}_h}(x/N(\mfp)) \right) \notag \\
    & = \left( \frac{\kappa}{\zeta_\mcm(h)} - \frac{\kappa}{\zeta_\mcm(h)} \frac{N(\mfp)^{h-1}-1}{N(\mfp)^h -1} \right) \frac{x}{N(\mfp)} + O_h \left( \left( \frac{x}{N(\mfp)} \right)^\tau \right).
\end{align*}

Thus, $\mcs(x)$ and $\mcs_\mfp(x)$ satisfy the conditions of \thmref{less-general} with $f = f_1$, $\mathcal{S} = \mathcal{S}_h$, $\beta = \eta = 1$, $\xi = \tau$, $C_\beta = \kappa/\zeta_\mcm(h)$, and
$$C_{\mfp,\beta}' = - C_\beta \frac{N(\mfp)^h-N(\mfp)}{N(\mfp)^h -1} \quad \text{ with } \quad |C_{\mfp,\beta}'| \leq C_\beta.$$
Thus applying \thmref{less-general} and noticing that $\omega(f_1(\mfm)) = \omega_1(\mfm)$ completes the proof.
\end{proof}

Next, for an integer $k \geq 1$, we prove the Erd\H{o}s-Kac theorem for $\omega_k(\mfm)$ over $k$-full elements given in \thmref{erdoskacforomegah} by applying \thmref{yurugen} with $f = f_k$, $\mcs = \mathcal{N}_k$, $\beta = \eta = 1/k$, and $y = x^{1/k \log \log x}$. We prove:

\begin{proof}[\textbf{Proof of \thmref{erdoskacforomegah}}]

First, we deal with the case $k = 1$, i.e, when $\mcs = \mathcal{N}_1 = \mcm$. By condition \eqref{star}, we have
$$|\mathcal{S}(x)| = \kappa x + O \big( x^\theta \big),$$
where $0 \leq \theta < 1$. For a fixed prime element $\mfp$, let 
$$\mathcal{S}_\mfp(x) := \left\{ \mathfrak{m} \in \mcm(x) \ : \ n_\mfp(f_1(\mathfrak{m})) \geq 1 \right\} = \left\{ \mathfrak{m} \in \mcm (x) \ : \ n_\mfp(\mathfrak{m}) = 1 \right\}.$$
Using \eqref{star}, we have
\begin{align*}
    |\mathcal{S}_\mfp(x)| = |\mathcal{S}(x/N(\mfp))| - |\mathcal{S}(x/N(\mfp)^2)|
   = \left( \kappa - \frac{\kappa}{N(\mfp)} \right) \frac{x}{N(\mfp)} + O\left( \left( \frac{x}{N(\mfp)} \right)^\theta \right).
\end{align*}

Thus, $\mcs(x)$ and $\mcs_\mfp(x)$ satisfy the conditions of \thmref{less-general} with $f = f_1$, $\mathcal{S} = \mathcal{M} =  \mathcal{N}_1$, $\beta = \eta = 1$, $\xi = \theta$, and $C_\beta = C_{\mfp,\beta}' = \kappa$.
Thus applying \thmref{less-general}, for $\gamma \in \mathbb{R}$, we have
$$\lim_{x \rightarrow \infty} \frac{1}{|\mathcal{N}_1(x)|} \left| \left\{ \mfm \in \mathcal{N}_1(x) \ : \ N(\mfm) \geq 3, \ \frac{\omega(f_1(\mfm)) - \log \log N(\mfm)}{\sqrt{\log \log N(\mfm)}} \leq \gamma\right\} \right| = \Phi(\gamma).$$
Finally, noticing that $\omega(f_1(\mfm)) = \omega_1(\mfm)$ completes the proof for the case $k = 1$.

Next, we consider the case $k = h \geq 2$, i.e., the case of $h$-full elements. Consider the set $\mathcal{S} = \mathcal{N}_h$. By \eqref{hfullideals} and \rmkref{remark2}, we have
$$|\mathcal{S}(x)| = \kappa \gamma_{\scaleto{h}{4.5pt}} x^{1/h} + O_h \big( R_{\mathcal{N}_h}(x) \big),$$
where $R_{\mathcal{N}_h}(x) \ll x^{\nu/h}$ for some $0 < \nu < 1$.
For a fixed prime ideal $\mfp$, let 
$$\mathcal{S}_\mfp(x) := \left\{ \mathfrak{m} \in \mathcal{N}_h(x) \ : \ n_\mfp (f_h(\mathfrak{m})) \geq 1 \right\} = \left\{ \mathfrak{m} \in \mathcal{N}_h(x) \ : \ n_\mfp (\mathfrak{m}) = h \right\}.$$
Using \lmaref{hfullidealsrestrict} with $R_{\mathcal{N}_h}(x) \ll x^{\nu/h}$, and the identity
$$ \frac{1-N(\mfp)^{-1/h}}{N(\mfp) (1-N(\mfp)^{-1/h}+N(\mfp)^{-1})} = \frac{1}{N(\mfp)} - \frac{N(\mfp)^{-1}}{N(\mfp)(1 - N(\mfp)^{-1/h} + N(\mfp)^{-1})},$$
we obtain
\begin{align*}
    |\mathcal{S}_\mfp(x)| 
    & = \left| \mathcal{N}_{h,\mfp}(x/N(\mfp)^h) \right|  \notag\\
    & = \frac{\kappa \gamma_{\scaleto{h}{4.5pt}}}{N(\mfp) \left( 1+ \frac{N(\mfp)^{-1}}{1-N(\mfp)^{-1/h}} \right)} x^{1/h} + O_h \left( \frac{x^{\nu/h}}{N(\mfp)^{\nu}} \right) \notag \\
    & = \frac{\left( 1-N(\mfp)^{-1/h} \right) \kappa \gamma_{\scaleto{h}{4.5pt}}}{N(\mfp) (1-N(\mfp)^{-1/h}+N(\mfp)^{-1})} x^{1/h} + O_h \left( \frac{x^{\nu/h}}{N(\mfp)^{\nu}} \right) \\
    & = \left( \kappa \gamma_{\scaleto{h}{4.5pt}} - \frac{\kappa \gamma_{\scaleto{h}{4.5pt}}}{N(\mfp)^{1/h}} \frac{N(\mfp)^{-1+(1/h)}}{1 - N(\mfp)^{-1/h} + N(\mfp)^{-1}}\right) \frac{x^{1/h}}{N(\mfp)} + O_h \left( \frac{x^{\nu/h}}{N(\mfp)^{\nu}} \right).
\end{align*}
Thus, $\mcs(x)$ and $\mcs_\mfp(x)$ satisfy the conditions of \thmref{less-general} with $f = f_h$, $\mathcal{S} = \mathcal{N}_h$, $\beta = \eta = 1/h$, $\xi = \nu$, $C_\beta = \kappa \gamma_{\scaleto{h}{4.5pt}}$, and
$$C_{\mfp, \beta}' = - C_\beta \frac{N(\mfp)^{-(h-1)/h}}{1 - N(\mfp)^{-1/h} + N(\mfp)^{-1}} \quad \text{ with } \quad |C_{\mfp,\beta}'| \leq C_\beta \frac{2^{-(h-1)/h}}{1 - 2^{-1/h} + 2^{-1}}.$$
Thus applying \thmref{less-general} and using $\omega(f_h(\mfm)) = \omega_h(\mfm)$ establishes the announced result for $k = h \geq 2$. This completes the proof.
\end{proof}

Next, for a sequence $\mcA = (a_1, a_2, \ldots)$ with $a_k \neq 0$, we prove that $\frac{1}{a_k}\omega_\mcA$ satisfies the Erd\H{o}s-Kac theorem over $k$-full elements in $\mcm$: 
\begin{proof}[\textbf{Proof of \thmref{erdoskacforomegaA}}]
    For a function $g : \mcm \rightarrow \mathbb{C}$ and an element $\mfm \in \mcm$ with $N(\mfm) \geq 3$, let $G_g(\mfm)$ be the ratio
\begin{equation}\label{rfn}
    G_g(\mfm) := \frac{g(\mfm) - \log \log N(\mfm)}{\sqrt{\log \log N(\mfm)}}.
\end{equation}
By hypothesis, $a_k \neq 0$. In this proof, we will use $g$ to represent $\frac{1}{a_k} \omega_\mcA$ or $\omega_k$ when necessary. For $a \in \mathbb{R}$ and a subset $\mcs$ of $\mcm$, let $\mcs(x)$ denote the set of elements of $\mcs$ with norm less than or equal to $x$, and 
\begin{equation}\label{dfsxa}
    D(g,\mcs,x,a) := \frac{1}{|\mcs(x)|} \left| \{ \mfm \in \mcs(x) \ : \ G_g(\mfm) \leq a \} \right|
\end{equation}
be the density function for sufficiently large $x$. Note that, if $k \geq 1$ and $\mfm$ is a $k$-full element, then $\omega_i(\mfm) = 0$ for all $i \in {1, 2, \ldots, k-1}$. Thus, $\frac{1}{a_k}\omega_\mcA(\mfm) = \omega_k(\mfm) + \sum_{i \geq k+1} \frac{a_i}{a_k} \omega_i(\mfm)$. Moreover, by \thmref{erdoskacforomegah}, we have
$$\lim_{x \rightarrow \infty} D(\omega_k, \mathcal{N}_k,x,a) = \Phi(a).$$
We intend to show
$$\lim_{x \rightarrow \infty} D\left( \frac{1}{a_k} \omega_\mcA, \mathcal{N}_k,x,a \right) = \Phi(a).$$

For any $\epsilon > 0$, we define the set
$$A(\mathcal{N}_k,x,\epsilon) := \left\{ \mfm \in \mathcal{N}_k(x) \ : \ \frac{\left| \frac{1}{a_k} \omega_\mcA(\mfm) - \omega_k(\mfm) \right|}{\sqrt{\log \log N(\mfm)}} \leq \epsilon \right\}.$$

Let $A^c(\mathcal{N}_k,x,\epsilon)$ denote the complement of $A(\mathcal{N}_k,x,\epsilon)$ inside $\mathcal{N}_k(x)$. We first deduce that $|A^c(\mathcal{N}_k,x,\epsilon)| = o(\mathcal{N}_k(x))$. Notice that,
\begin{align*}
    & \sum_{\substack{ \mfm \in A^c(\mathcal{N}_k,x,\epsilon) \\ x/\log x \leq N(\mfm)}} \left| \frac{1}{a_k} \omega_\mcA(\mfm) - \omega_k(\mfm) \right| \\
    & \geq \epsilon \sqrt{\log \log (x/\log x)} \ |\{ \mfm \in A^c(\mathcal{N}_k,x,\epsilon) \ | \ N(\mfm) \geq x/\log x \}|.
\end{align*}
Moreover
\begin{align*}
    \sum_{\substack{ \mfm \in A^c(\mathcal{N}_k,x,\epsilon) \\ x/\log x \leq N(\mfm)}} \left| \frac{1}{a_k} \omega_\mcA(\mfm) - \omega_k(\mfm) \right| & \leq \sum_{\mfm \in \mathcal{N}_k(x)} \left| \frac{1}{a_k} \omega_\mcA(\mfm) - \omega_k(\mfm) \right| \\
    & \leq \sum_{\mfm \in \mathcal{N}_k(x)} \left( \sum_{i \geq k+1} \left| \frac{a_i}{a_k} \right| \omega_i(\mfm) \right).
\end{align*}
Moreover, by condition \eqref{star} for $k =1$ and \eqref{hfullideals} for $k \geq 2$, we have
$$\sum_{\substack{\mfm \in \mathcal{N}_k(x) \\ n_\mfp(\mfm) \geq i}} 1 \ll \sum_{\substack{\mfm \in \mathcal{N}_k(x/N(\mfp)^i)}} 1 \ll \frac{x^{1/k}}{N(\mfp)^{i/k}}.$$
Note that, the rate of growth of $a_i'$s given in the hypothesis of the theorem ensures that \eqref{Condition-ai} holds. Thus, by interchanging sums and applying \eqref{Condition-ai}, we obtain
\begin{align*}
    \sum_{\mfm \in \mathcal{N}_k(x)} \left( \sum_{i \geq k+1} \left| \frac{a_i}{a_k} \right| \omega_i(\mfm) \right) & = \sum_{\mfm \in \mathcal{N}_k(x)} \left( \sum_{i \geq k+1} \left| \frac{a_i}{a_k} \right| \sum_{\substack{\mfp \\ n_\mfp(\mfm) = i}} 1 \right) \\
    & = \sum_{i \geq k+1} \left| \frac{a_i}{a_k} \right| \sum_{\mfm \in \mathcal{N}_k(x)} \sum_{\substack{\mfp \\ n_\mfp(\mfm) = i}} 1 \\
    & \ll \sum_{i \geq k+1} \left| \frac{a_i}{a_k} \right| \sum_{\substack{\mfp \\ N(\mfp) \leq x^{1/k}}} \sum_{\substack{\mfm \in \mathcal{N}_k(x) \\ n_\mfp(\mfm) \geq i}} 1 \\
    & \ll x^{1/k} \sum_{\substack{\mfp \\ N(\mfp) \leq x^{1/k}}} \sum_{i \geq k+1} \frac{|a_i|}{N(\mfp)^{i/k}} \\
    & \ll x^{1/k}.
\end{align*}
Combining the above results, we obtain
\begin{align*}
    & \epsilon \sqrt{\log \log (x/\log x)} \ |\{ \mfm \in A^c(\mathcal{N}_k,x,\epsilon) \ | \ N(\mfm) \geq x/\log x \}| \\
    & \ll
    \sum_{\substack{\mfm \in A^c(\mathcal{N}_k,x,\epsilon) \\ x/\log x \leq N(\mfm)}} \left| \frac{1}{a_k} \omega_\mcA(\mfm) - \omega_k(\mfm) \right|  \\
    & \ll x^{1/k}.
\end{align*}
Thus,
$$|\{ \mfm \in A^c(\mathcal{N}_k,x,\epsilon) \ | \ N(\mfm) \geq x/\log x \}| \ll \frac{x^{1/k}}{\epsilon \sqrt{\log \log (x/\log x)}} = o(x^{1/k}).$$
Moreover, by \eqref{hfullideals},
$$|\{ \mfm \in A^c(\mathcal{N}_k,x,\epsilon) \ | \ N(\mfm) < x/\log x \}| \ll \mathcal{N}_k(x/\log x) \ll_k \frac{x^{1/k}}{\log x} = o(x^{1/k}).$$
Combining the above two results, we deduce
$$|A^c(\mathcal{N}_k,x,\epsilon)| = o(x^{1/k}).$$
Since, by Condition $(\star)$ and \eqref{hfullideals} again, $\mathcal{N}_k(x) \gg x^{1/k}$, thus $|A^c(\mathcal{N}_k,x,\epsilon)| = o(\mathcal{N}_k(x))$ follows.

Note that, if $\mfm \in A(\mathcal{N}_k,x,\epsilon)$, we have
$$G_{\omega_k}(\mfm) - \epsilon \leq G_{\frac{1}{a_k}\omega_\mcA}(\mfm) \leq G_{\omega_k}(\mfm) + \epsilon.$$
Thus, for any $a \in \mathbb{R}$, if $\mfm \in A(\mathcal{N}_k,x,\epsilon)$, we have
\begin{equation}\label{use1}
    G_{\omega_k}(\mfm) \leq a - \epsilon \implies G_{\frac{1}{a_k}\omega_\mcA}(\mfm) \leq a,
\end{equation}
and
\begin{equation}\label{use2}
    G_{\frac{1}{a_k}\omega_\mcA}(\mfm) \leq a \implies G_{\omega_k}(\mfm) \leq a + \epsilon.
\end{equation}
By \eqref{use1}, we have
$$ \{ \mfm \in A(\mathcal{N}_k,x,\epsilon) \ : \ G_{\omega_k}(\mfm) \leq a - \epsilon \} \subseteq  \{ \mfm \in A(\mathcal{N}_k,x,\epsilon) \ : \ G_{\frac{1}{a_k}\omega_\mcA}(\mfm) \leq a \},$$
which implies
\begin{multline*}
    \{ \mfm \in A(\mathcal{N}_k,x,\epsilon) \ : \ G_{\omega_k}(\mfm) \leq a - \epsilon \} \cup \{ \mfm \in A^c(\mathcal{N}_k,x,\epsilon) \ : \ G_{\omega_k}(\mfm) \leq a - \epsilon \} \\
    = \{ \mfm \in \mathcal{N}_k(x) \ : \ G_{\omega_k}(\mfm) \leq a - \epsilon \} \\
    \subseteq \{ \mfm \in \mathcal{N}_k(x) \ : \ G_{\frac{1}{a_k}\omega_\mcA}(\mfm) \leq a \} \cup \{ \mfm \in A^c(\mathcal{N}_k,x,\epsilon) \ : \ G_{\omega_k}(\mfm) \leq a - \epsilon \}.
\end{multline*}
Thus
$$|\{ \mfm \in \mathcal{N}_k(x) \ : \ G_{\omega_k}(\mfm) \leq a - \epsilon \}| \leq |\{ \mfm \in \mathcal{N}_k(x) \ : \ G_{\frac{1}{a_k}\omega_\mcA}(\mfm) \leq a \}| + |A^c(\mathcal{N}_k,x,\epsilon)|.$$
Therefore, by the definition of $D(g,\mathcal{N}_k,x,a)$, \thmref{erdoskacforomegah}, and the result $|A^c(\mathcal{N}_k,x,\epsilon)| = o(\mathcal{N}_k(x))$ above, we have
\begin{equation}\label{use3}
    \Phi(a-\epsilon) \leq \liminf_{x \rightarrow \infty} D\left(\frac{1}{a_k}\omega_\mcA,\mathcal{N}_k,x,a \right).
\end{equation}

Moreover, by \eqref{use2}, we have
$$ \{ \mfm \in A(\mathcal{N}_k,x,\epsilon) \ : \ G_{\frac{1}{a_k}\omega_\mcA}(\mfm) \leq a \} \subseteq  \{ \mfm \in A(\mathcal{N}_k,x,\epsilon) \ : \  G_{\omega_k}(\mfm) \leq a + \epsilon  \},$$
which implies
\begin{multline*}
     \{ \mfm \in A(\mathcal{N}_k,x,\epsilon) \ : \ G_{\frac{1}{a_k}\omega_\mcA}(\mfm) \leq a \} \cup \{ \mfm \in A^c(\mathcal{N}_k,x,\epsilon) \ : \ G_{\frac{1}{a_k}\omega_\mcA}(\mfm) \leq a \} \\
    = \{ \mfm \in \mathcal{N}_k(x) \ : \ G_{\frac{1}{a_k}\omega_\mcA}(\mfm) \leq a \} \\
    \subseteq \{ \mfm \in \mathcal{N}_k(x) \ : \  G_{\omega_k}(\mfm) \leq a + \epsilon  \} \cup \{ \mfm \in A^c(\mathcal{N}_k,x,\epsilon) \ : \ G_{\frac{1}{a_k}\omega_\mcA}(\mfm) \leq a \}.
\end{multline*}
Thus
$$|\{ \mfm \in \mathcal{N}_k(x) \ : \ G_{\frac{1}{a_k}\omega_\mcA}(\mfm) \leq a \}| \leq |\{ \mfm \in \mathcal{N}_k(x) \ : \  G_{\omega_k}(\mfm) \leq a + \epsilon  \}| + |A^c(\mathcal{N}_k,x,\epsilon)|.$$
Again, by the definition of $D(g,\mathcal{N}_k,x,a)$, \thmref{erdoskacforomegah}, and the result $|A^c(\mathcal{N}_k,x,\epsilon)| = o(\mathcal{N}_k(x))$ above, we have
\begin{equation}\label{use4}
    \limsup_{x \rightarrow \infty} D\left(\frac{1}{a_k}\omega_\mcA,\mathcal{N}_k,x,a \right) \leq \Phi(a+\epsilon).
\end{equation}
Combining \eqref{use3} and \eqref{use4}, we obtain
$$\Phi(a-\epsilon) \leq \liminf_{x \rightarrow \infty} D\left(\frac{1}{a_k}\omega_\mcA,\mathcal{N}_k,x,a \right) \leq \limsup_{x \rightarrow \infty} D\left(\frac{1}{a_k}\omega_\mcA,\mathcal{N}_k,x,a \right) \leq \Phi(a+\epsilon).$$
Since $\epsilon > 0$ is arbitrary, thus we obtain
$$\lim_{x \rightarrow \infty} D\left(\frac{1}{a_k}\omega_\mcA,\mathcal{N}_k,x,a \right) = \Phi(a).$$
This completes the proof.
\end{proof}

\section{Applications of the general setting}\label{applications}

In this section, we provide various applications of our general setting. In each case, we show that condition \eqref{star} holds, and thus deduce the Erd\H{o}s-Kac theorem for the $\omega_\mcA$-function over $h$-free and $k$-full elements, for some integer $h \geq 2$ and $k \geq 1$, and where $\mcA = (a_1, a_2, \ldots)$ satisfies some of the following types:

\begin{enumerate}
    \item[(1)] if $a_i = 1$ for all $i \in \mathbb{Z}_{>0}$, i.e., $\omega_\mcA(\mfm) = \omega(\mfm)$,
    \item[(2)] if $a_i = i$ for all $i \in \mathbb{Z}_{>0}$, i.e., $\omega_\mcA(\mfm) = \Omega(\mfm)$,
    \item[(3)] if $a_i = \log(i+1)$ for all $i \in \mathbb{Z}_{>0}$, i.e., $\omega_\mcA(\mfm) = \log d(\mfm)$,
    \item[(4)] if $a_i = 1$ for all odd $i$, and $a_i = - 1$ for all even $i$, i.e., $\omega_\mcA(\mfm) = \omega_T(\mfm)$ (see \eqref{def-omegaT}).
    \item[(5)] if $a_i = 0$ for all $i \neq k$ and $a_k = 1$, i.e., $\omega_\mcA(\mfm) = \omega_k(\mfm)$.
\end{enumerate}

\subsection{The case of ideals in number fields}

Let $K/\mathbb{Q}$ be a number field of degree $n_K = [K: \mathbb{Q}]$ and $\mathcal{O}_K$ be its ring of integers. Let $\mathcal{P}$ be the set of prime ideals of $\mathcal{O}_K$ and $\mathcal{M}$ be the set of ideals of $\mathcal{O}_K$. Let the norm map be $N: \mathcal{M} \rightarrow \mathbb{Z}_{>0}$ be the standard norm map, i.e., $\mathfrak{m} \mapsto N(\mathfrak{m}) := |\mathcal{O}_K/ \mathfrak{m}|$. Let $X = \mathbb{Q}$.

 Let $\kappa_K$ be given by
\begin{equation*}
    \kappa_K = \frac{2^{r_1}(2 \pi)^{r_2} h R}{\nu \sqrt{|d_K|}},
\end{equation*}
with
\begin{align*}
    r_1 & = \text{the number of real embeddings of } K,\\
    2 r_2 & = \text{the number of complex embeddings of } K,\\
    h & = \text{the class number},\\
    R & = \text{the regulator}, \\
    \nu & = \text{the number of roots of unity}, \\
    d_K & = \text{the discriminant of } K.
\end{align*}
Landau in \cite[Satz 210]{Landau} proved that
\begin{equation*}
    \sum_{\substack{\mfm \in \mcm \\ N(\mfm) \leq x}} 1 = \kappa_K x + O \left( x^{1 - \frac{2}{n_{\scaleto{K}{3pt}}+1}}\right),
\end{equation*}
which satisfies condition \eqref{star} with $\kappa = \kappa_K$ and $\theta = 1 - \frac{2}{n_{\scaleto{K}{3pt}}+1}$. Thus, \thmref{erdoskacforomegaA-hfree} and \thmref{erdoskacforomegaA} give the Erd\H{o}s-Kac theorem for $\omega_\mcA(\mfm)$ over $h$-free and $k$-full ideals respectively as the following:

\begin{cor}
Let $x > 2$ be a rational number. Let $h \geq 2$ be an integer. Let $\mathcal{S}_h(x)$ be the set of $h$-free ideals with norm less than or equal to $x$. Let $\mcA$ be any sequence from Types 1-4 and Type 5 with $k = 1$. Then for $a \in \mathbb{R}$, we have
$$\lim_{x \rightarrow \infty} \frac{1}{|\mathcal{S}_h(x)|} \bigg| \left\{ \mfm \in \mathcal{S}_h(x) \ : \ |\mathcal{O}_K/ \mathfrak{m}| \geq 3, 
\ \frac{\frac{1}{a_1} \omega_\mcA(\mfm) - \log \log |\mathcal{O}_K/ \mathfrak{m}|}{\sqrt{\log \log |\mathcal{O}_K/ \mathfrak{m}|}} \leq a \right\} \bigg| = \Phi(a).$$
\end{cor}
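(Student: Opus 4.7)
The plan is to realize this corollary as a direct specialization of Theorem \ref{erdoskacforomegaA-hfree} to the abelian monoid of integral ideals in $\mathcal{O}_K$. Most of the setup is already in place in the paragraphs immediately preceding the statement, so the proof reduces to verifying the three hypotheses of Theorem \ref{erdoskacforomegaA-hfree}: condition \eqref{star}, $a_1 \neq 0$, and the growth bound on $\{a_i\}$.

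First, I would verify \eqref{star}. The preceding paragraph already records Landau's classical counting formula $\sum_{N(\mfm) \leq x} 1 = \kappa_K x + O(x^{1 - 2/(n_K+1)})$ with $\kappa_K > 0$ and $1 - 2/(n_K+1) \in [0,1)$, which is exactly \eqref{star} with $\kappa = \kappa_K$ and $\theta = 1 - 2/(n_K + 1)$. This single citation discharges the structural hypothesis on the triple $(\mathcal{P}, \mathcal{M}, X)$.

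Second, I would check that each of the admissible sequences in the statement (Types 1--4 and Type 5 with $k = 1$) satisfies the arithmetic hypotheses of Theorem \ref{erdoskacforomegaA-hfree}. In every case the first coefficient is nonzero: $a_1 = 1$ for Types 1, 2, 4 and Type 5 with $k = 1$, while $a_1 = \log 2 \neq 0$ for Type 3. For the growth condition $a_i \ll B^i$ with $B \leq 2^{1-\alpha}$, I note that Types 1 and 4 have $|a_i| \leq 1$, Type 5 with $k=1$ has $a_i = 0$ for $i \geq 2$, and for Types 2 and 3 the sequences $a_i = i$ and $a_i = \log(i+1)$ are subexponential, so any choice of $\alpha \in (0,1)$ works (for instance $B = 2^{1/2}$ suffices throughout).

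With \eqref{star} and the hypotheses on $\mcA$ verified, Theorem \ref{erdoskacforomegaA-hfree} applies verbatim and delivers the claimed Gaussian limit law for $\frac{1}{a_1}\omega_{\mcA}(\mfm)$ over $h$-free ideals of $\mathcal{O}_K$. I do not anticipate any significant obstacle: the corollary is essentially a sanity check confirming that the abstract framework specializes cleanly to the classical number-field setting, and the only nontrivial input, Landau's power-saving estimate, has already been quoted.
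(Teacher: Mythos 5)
Your proposal is correct and matches the paper's own argument: the paper likewise cites Landau's estimate $\sum_{N(\mfm)\le x}1=\kappa_K x+O\bigl(x^{1-2/(n_K+1)}\bigr)$ to verify condition \eqref{star} and then invokes Theorem \ref{erdoskacforomegaA-hfree} directly, with the checks that $a_1\neq 0$ and $a_i\ll B^i$ for the admissible sequence types left implicit (having been noted earlier in the introduction). Your slightly more explicit verification of the growth hypothesis for each Type is a harmless elaboration of the same route.
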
 
\begin{cor}
Let $x > 2$ be a rational number. Let $k \geq 1$ be an integer. Let $\mathcal{N}_k(x)$ be the set of $k$-full ideals with norm less than or equal to $x$. Let $\mcA$ be any sequence from Types 1-5. Then for $a \in \mathbb{R}$, we have
$$\lim_{x \rightarrow \infty} \frac{1}{|\mathcal{N}_k(x)|} \bigg| \left\{ \mfm \in \mathcal{N}_k(x) \ : \ |\mathcal{O}_K/ \mathfrak{m}| \geq 3, 
\ \frac{\frac{1}{a_k} \omega_\mcA(\mfm) - \log \log |\mathcal{O}_K/ \mathfrak{m}|}{\sqrt{\log \log |\mathcal{O}_K/ \mathfrak{m}|}} \leq a \right\} \bigg| = \Phi(a).$$
\end{cor}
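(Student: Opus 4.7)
The plan is to derive this corollary as a direct application of \thmref{erdoskacforomegaA}. Taking $\mcp$ to be the set of prime ideals of $\mathcal{O}_K$, $\mcm$ the full semigroup of nonzero ideals, $X = \mathbb{Q}$, and $N$ the absolute norm, the main task is to check that the hypotheses of \thmref{erdoskacforomegaA} are satisfied in this arithmetic context.

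First, I would invoke Landau's theorem quoted just above this corollary to see that condition \eqref{star} holds with $\kappa = \kappa_K$ and $\theta = 1 - 2/(n_K+1) < 1$, where $n_K = [K:\mathbb{Q}]$. Since $N(\mfm) = |\mathcal{O}_K/\mfm|$, the quantities $\log\log N(\mfm)$ in the hypothesis of \thmref{erdoskacforomegaA} are exactly the $\log\log |\mathcal{O}_K/\mfm|$ appearing in the corollary, so the translation is immediate.

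Second, I would verify, for each of the five types of sequences $\mcA = (a_1,a_2,\ldots)$, that $a_k \neq 0$ and that the growth condition $a_i \ll B^i$ as $i \to \infty$ with $0 < B \leq 2^{1/k-\alpha}$ for some $\alpha > 0$ is met. For Types 1 and 4, the sequence is bounded in absolute value by $1$ and $a_k \in \{\pm 1\}$, so both requirements are immediate. For Type 2, $a_k = k \geq 1 \neq 0$ and $a_i = i$ grows polynomially, hence $a_i \ll 2^{i/(2k)}$ for every $k \geq 1$. For Type 3, $a_k = \log(k+1) > 0$ and $a_i = \log(i+1)$ grows sublogarithmically, so again $a_i \ll 2^{i/(2k)}$. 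For Type 5, $a_k = 1$ while $a_i = 0$ for $i \neq k$, and the growth condition holds trivially.

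With \eqref{star} and the sequence hypothesis both verified, \thmref{erdoskacforomegaA} applied to each admissible $\mcA$ yields the claimed Gaussian limit. There is no substantive obstacle here: the work has already been absorbed into the abstract theorem, and the corollary reduces to checking five elementary bounds together with Landau's classical counting estimate. The same reasoning, with the companion Theorem \ref{erdoskacforomegaA-hfree} replacing \thmref{erdoskacforomegaA}, handles the analogous $h$-free corollary for this number-field setting.
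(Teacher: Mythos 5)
Your proposal is correct and follows exactly the paper's route: verify condition \eqref{star} via Landau's ideal-counting theorem with $\kappa = \kappa_K$ and $\theta = 1 - 2/(n_K+1)$, check that each of the five sequence types satisfies $a_k \neq 0$ and the growth bound $a_i \ll 2^{i/(2k)}$, and then apply Theorem~\ref{erdoskacforomegaA}. The paper performs the sequence-type verifications in the introduction rather than at the corollary itself, but the substance is identical.
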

\begin{rmk}
    The Erdos-Kac theorems for $\omega(\mfm)$ over $h$-free and $h$-full ideals were first proved in \cite[Theorems 1.3 \& 1.4]{dkl3}, which employed a similar proof strategy as in this article.
\end{rmk}

\subsection{The case of effective divisors in global function fields}

Let $q$ be a prime power and $\mathbb{F}_q$ be the finite field with $q$ elements. Let $K/\mathbb{F}_q$ be a global function field. Let $G_K$ be its genus and $C_K$ be its class number. A prime $\mfp$ in $K$ is a discrete valuation ring $R$ with maximal ideal $P$ such that $P \subset R$ and the quotient field of $R$ is $K$. The degree of $\mfp$, denoted as $\deg \mfp$, is defined as the dimension of $R/P$ over $\mathbb{F}_q$, which is finite. Let $\mcp$ be the set of all primes in $K$. Let $\mcm$ be the free abelian monoid generated by $\mcp$. More precisely, for each $\mfm \in \mathcal{M}$, we write
$$\mfm = \sum_{\mfp \in  \mathcal{P}} n_\mfp(\mfm) \mfp,$$
with $n_\mfp(\mfm) \in \mathbb{Z}_{>0} \cup \{ 0 \}$ and $n_\mfp(\mfm) =0$ for all but finitely many $\mfp$. We call elements in $\mcm$ as effective divisors. For an element $\mfm \in \mcm$, we define the degree of $\mfm$ as
$$\deg \mfm = \sum_{\mfp \in  \mathcal{P}} n_\mfp(\mfm) \deg \mfp.$$ 
By \cite[Lemma 5.5]{mr}, for any integer $n \geq 0$, there are finitely many effective divisors of degree $n$. This proves that $\mcp$ is a countable set that satisfies the hypothesis of our main theorems. Let the norm map $N: \mathcal{M} \rightarrow \mathbb{Z}_{>0}$ be the $q$-power map defined as $\mathfrak{m} \mapsto N(\mfm) := q^{\deg \mathfrak{m}}$. Let $X = \{ q^z : z \in \mathbb{Z} \}$. 

By \cite[Lemma 5.8 \& Corollary 4 to Theorem 5.4]{mr}, for a non-negative integer $n$ satisfying $n > 2 G_K - 2$, the number of effective divisors of degree $n$ is 
$$C_K \frac{q^{n- G_K+1} -1}{q-1}.$$ 
Thus, for sufficiently large $n$, we obtain
$$\sum_{\substack{\mfm \\ \deg \mfm \leq n}} 1 = \frac{C_K}{q^{G_K}} \left( \frac{q}{q-1} \right)^2 q^n + O(n).$$
This satisfies condition \eqref{star} with $\kappa = \frac{C_K}{q^{G_K}} \left( \frac{q}{q-1} \right)^2$ and $\theta = \epsilon$ for any $\epsilon \in (0,1)$. Thus, \thmref{erdoskacforomegaA-hfree} and \thmref{erdoskacforomegaA} give the Erdos-Kac theorems for $\omega_\mcA(\mfm)$ over $h$-free and $k$-full effective divisors in a global function field respectively as the following:
\begin{cor}
Let $n, h \in \mathbb{Z}_{>0}$ with $h \geq 2$. Let $K/\mathbb{F}_q$ be a global function field with genus $G_K$ and class number $C_K$. Let $\mathcal{S}_h(n)$ be the set of $h$-free effective divisors in $K$ of degree less than or equal to $n$. Let $\mcA$ be any sequence from Types 1-4 and Type 5 with $k = 1$. Then for $a \in \mathbb{R}$, we have
$$\lim_{n \rightarrow \infty} \frac{1}{|\mathcal{S}_h(n)|} \bigg| \left\{ \mfm \in \mathcal{S}_h(n) \ : \ q^{\deg \mathfrak{m}} \geq 3, 
\ \frac{\frac{1}{a_1} \omega_\mcA(\mfm) - \log \log q^{\deg \mathfrak{m}}}{\sqrt{\log \log q^{\deg \mathfrak{m}}}} \leq a \right\} \bigg| = \Phi(a).$$
\end{cor}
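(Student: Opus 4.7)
The plan is to deduce this corollary as a direct specialization of \thmref{erdoskacforomegaA-hfree} to the global-function-field setting. Essentially all the work has already been packaged into that theorem, so the proof reduces to verifying its two hypotheses in the present context.

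First, I would confirm that the data $(\mcp, \mcm, X)$ satisfies condition~\eqref{star}. The countability of the prime set $\mcp$ follows from \cite[Lemma 5.5]{mr}, which gives finiteness of the set of effective divisors of each fixed degree, and freeness of $\mcm$ on $\mcp$ is built into the definition. The asymptotic
\[
\sum_{\deg \mfm \leq n} 1 = \frac{C_K}{q^{G_K}} \left( \frac{q}{q-1} \right)^2 q^n + O(n),
\]
derived just before the corollary from the class number formula \cite[Corollary 4 to Theorem 5.4]{mr}, yields \eqref{star} with $\kappa = \frac{C_K}{q^{G_K}} \left( \frac{q}{q-1} \right)^2$ and any $\theta \in (0,1)$, since $n = \log_q x$ is negligible compared with $x^\theta$.

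Second, I would verify the growth hypothesis on the sequence $\mcA$ and the nonvanishing condition $a_1 \neq 0$. The four families of sequences under consideration are $a_i = 1$, $a_i = i$, $a_i = \log(i+1)$, $a_i = (-1)^{i-1}$, together with the $k=1$ case of Type~5 where $a_1 = 1$ and $a_i = 0$ for $i \geq 2$. In every case $a_1 \neq 0$, and $a_i$ grows at most polynomially, so $a_i \ll B^i$ with $B = 2^{1 - \alpha}$ for some $\alpha > 0$ is trivially met.

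With both hypotheses in place, \thmref{erdoskacforomegaA-hfree} applies verbatim, and substituting $N(\mfm) = q^{\deg \mfm}$ in the resulting statement gives the desired conclusion. There is no real obstacle here: the heavy lifting is done inside \thmref{erdoskacforomegaA-hfree}, and the only mildly delicate point is confirming that the $O(n)$ remainder in the divisor count is absorbed into $O(x^\theta)$ for any $\theta \in (0,1)$, which is immediate since $n = \log_q x$.
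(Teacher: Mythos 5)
Your proposal is correct and follows essentially the same route as the paper: verify condition \eqref{star} from the effective-divisor count $\frac{C_K}{q^{G_K}}\left(\frac{q}{q-1}\right)^2 q^n + O(n)$ (absorbing the $O(n)$ term into $O(x^\theta)$ for any $\theta \in (0,1)$), check that each listed sequence $\mcA$ has $a_1 \neq 0$ and satisfies the exponential growth bound, and then invoke \thmref{erdoskacforomegaA-hfree}. No discrepancies to note.
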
 
\begin{cor}
Let $n, h \in \mathbb{Z}_{>0}$. Let $K/\mathbb{F}_q$ be a global function field with genus $G_K$ and class number $C_K$. Let $\mathcal{N}_k(n)$ be the set of $k$-full effective divisors in $K$ of degree less than or equal to $n$. Let $\mcA$ be any sequence from Types 1-5. Then for $a \in \mathbb{R}$, we have
$$\lim_{n \rightarrow \infty} \frac{1}{|\mathcal{N}_k(n)|} \bigg| \left\{ \mfm \in \mathcal{N}_k(n) \ : \ q^{\deg \mathfrak{m}} \geq 3, 
\ \frac{\frac{1}{a_k} \omega_\mcA(\mfm) - \log \log q^{\deg \mathfrak{m}}}{\sqrt{\log \log q^{\deg \mathfrak{m}}}} \leq a \right\} \bigg| = \Phi(a).$$
\end{cor}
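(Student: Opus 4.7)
The plan is to recognize this corollary as an application of \thmref{erdoskacforomegaA} to the monoid of effective divisors of a global function field, so the work is to verify that condition \eqref{star} holds in this setting and that each sequence $\mcA$ of Types 1--5 satisfies the growth hypothesis $a_i \ll B^i$ with $0 < B \leq 2^{1/k - \alpha}$ for some $\alpha > 0$.

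First I would set $\mcp$ to be the primes of $K$, $\mcm$ the free abelian monoid on $\mcp$ (the effective divisors), $N(\mfm) = q^{\deg \mfm}$, and $X = \{q^z : z \in \mathbb{Z}\}$. The countability of $\mcp$ and finiteness of divisors of bounded degree follow from \cite[Lemma 5.5]{mr}, as already recalled in the paragraph immediately preceding this corollary. Next I would verify \eqref{star}. By \cite[Lemma 5.8 \& Corollary 4 to Theorem 5.4]{mr}, for $n > 2G_K - 2$ the number of effective divisors of degree exactly $n$ equals $C_K (q^{n - G_K + 1} - 1)/(q-1)$, while the contribution from the finitely many $n \leq 2G_K - 2$ is $O(1)$. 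Summing a geometric series in $q^n$ over $n \leq \lfloor \log x / \log q \rfloor$ then yields
\begin{equation*}
\mcm(x) = \sum_{\deg \mfm \leq \log x / \log q} 1 = \frac{C_K}{q^{G_K}} \left( \frac{q}{q-1} \right)^{\!2} x + O\!\left( \frac{\log x}{\log q} \right) = \kappa x + O(x^{\theta})
\end{equation*}
with $\kappa = \tfrac{C_K}{q^{G_K}} (q/(q-1))^2$ and any fixed $\theta \in (0,1)$. Thus condition \eqref{star} holds, and the general theory of \S1 applies verbatim.

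With \eqref{star} in hand, I would verify the growth hypothesis on $\mcA$ case by case. For Type 1, $a_i = 1$ trivially satisfies $a_i \ll B^i$ with $B = 1$. For Type 2, $a_i = i$ is bounded by $B^i$ for any $B > 1$, in particular for some $B \leq 2^{1/k - \alpha}$ by choosing $\alpha$ small enough (note $2^{1/k} > 1$ for every $k \geq 1$). For Type 3, $a_i = \log(i+1)$ is likewise subexponential. For Type 4, $|a_i| = 1$ is trivial. For Type 5, all but one $a_i$ vanishes so the bound is immediate. In every case the condition of \thmref{erdoskacforomegaA} on $\mcA$ is met.

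Invoking \thmref{erdoskacforomegaA} then directly yields the stated limit, since $N(\mfm) = q^{\deg \mfm}$ translates the normalization $(\omega_\mcA(\mfm)/a_k - \log\log N(\mfm))/\sqrt{\log\log N(\mfm)}$ into exactly the quantity in the corollary. I do not foresee a real obstacle here: the only non-cosmetic step is verifying \eqref{star}, and this is a standard consequence of the explicit formula for the number of effective divisors of fixed degree. The rest is bookkeeping to match hypotheses, which has already been demonstrated in the three corollaries stated at the end of Section~1 for the abstract monoid setting.
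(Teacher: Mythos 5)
Your proposal is correct and follows exactly the route the paper takes: the paper proves this corollary only by verifying condition \eqref{star} for effective divisors via the count $C_K(q^{n-G_K+1}-1)/(q-1)$ of divisors of fixed degree (yielding $\kappa = \tfrac{C_K}{q^{G_K}}(q/(q-1))^2$ and $\theta=\epsilon$ for any $\epsilon\in(0,1)$) and then invoking \thmref{erdoskacforomegaA}, with the Type 1--5 growth checks already recorded in Section 1. No discrepancies to report.
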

\begin{rmk}
For the special case when $K = \mathbb{F}_q(x)$, whose genus and class number are 0 and 1 respectively, we can consider the abelian monoid $Z = \mathbb{F}_q[x]$, the ring of monic polynomials in one variable over $\mathbb{F}_q$. The prime elements of $Z$ are the monic irreducible polynomials in $Z$. The localizations of $Z$ at these prime elements exhaust the set of all primes of $K$ except one, the prime at infinity. Using the fact that there are $q^n$ monic polynomials of degree $n$, we obtain
$$\sum_{\substack{\mfm \in Z \\ \deg \mathfrak{m} \leq n}} 1 = \frac{q}{q-1} q^n + O(1).$$
This satisfies condition \eqref{star} with $\kappa = q/(q-1)$ and $\theta = 0$. Thus, the Erd\H{o}s-Kac theorems for $\omega(\mfm)$ over $h$-free and $h$-full polynomials over finite fields can be deduced from \thmref{erdoskacforomegaA-hfree} and \thmref{erdoskacforomegaA}. Such a result will be equivalent to the ones studied by Lal\'in and Zhang \cite[Theorems 4.2 \& 6.2]{lz}. 

Similarly, we can deduce the Erd\H{o}s-Kac theorems for $\Omega(\mfm)$ over $h$-free and $h$-full polynomials, which will be equivalent to the results of  Lal\'in and Zhang \cite[Theorems 1.3 \& 1.6]{lz}. We can also deduce the Erd\H{o}s-Kac theorems for $\omega_1(\mfm)$ over $h$-free polynomials and $\omega_h(\mfm)$ over $h$-full polynomials, which will be equivalent to the results of  Gom\'ez and Lal\'in \cite[Theorems 1.2 \& 1.6]{lg}. 

Note that, in this special case, $\kappa = q/(q-1)$ instead of $(q/(q-1))^2$ to account for the lack of the prime at infinity analog in its construction.
\end{rmk}
\begin{rmk}
    The study of global function fields over $\mathbb{F}_q$ is geometrically equivalent to the study of irreducible projective varieties of dimension 1 over $\mathbb{F}_q$. Such varieties are also called irreducible curves. We can apply our main theorems to irreducible projective varieties of dimension r over $\mathbb{F}_q$, where $r$ is any positive integer. We study this in the following subsection.
\end{rmk}

\subsection{The case of effective \texorpdfstring{$0$}{}-cycles in geometrically irreducible projective varieties of dimension \texorpdfstring{$r$}{}}

In this subsection, we adopt notation from \cite[Example 4 of Section 4]{liuturan}. 

Let $q$ be a prime power and $\mathbb{F}_q$ be the finite field with $q$ elements. Let $r$ be a positive integer. Let $V/\mathbb{F}_q$ be a geometrically irreducible projective variety of dimension $r$. Let $\mcp$ be the set of closed points of $V/\mathbb{F}_q$, which is in bijection with the set of orbits of $V/\mathbb{F}_q$ under the action of $\textnormal{Gal}(\bar{\mathbb{F}}_q/\mathbb{F}_q)$ (see \cite[Proposition 6.9]{lorenzini}). For each $\mfp \in \mcp$, we define the degree of $\mfp$, $\deg \mfp$, to be the length of the corresponding orbit. Let $\mcm$ be the free abelian monoid generated by $\mcp$. We call elements in $\mcm$ as effective 0-cycles. For $\mfm \in \mcm$, we have $\mfm = \sum_{\mfp \in  \mathcal{P}} n_\mfp(\mfm) \mfp$ with $n_\mfp(\mfm) \in \mathbb{Z}_{>0} \cup \{ 0 \}$ and $n_\mfp(\mfm) =0$ for all but finitely many $\mfp$. We define the degree of $\mfm$ as
$$\deg \mfm = \sum_{\mfp \in  \mathcal{P}} n_\mfp(\mfm) \deg \mfp.$$ 
By \cite[Lemma 3.11]{lorenzini}, we deduce that $\mcp$ is countable and satisfies the hypothesis of our main theorems. Let the norm map $N: \mathcal{M} \rightarrow \mathbb{Z}_{>0}$ be the $q^r$-power map defined as $\mathfrak{m} \mapsto N(\mfm) := q^{r \deg \mathfrak{m}}$. Let $X = \{ q^{rz} : z \in \mathbb{Z} \}$. In \cite[Remark 1 of Section 4]{liuturan}, the third author proved that
$$\sum_{\substack{\mfm \\ \deg \mathfrak{m} \leq n}} 1 = \kappa' \left( \frac{q^r}{q^r-1} \right) q^{rn} + O \left( n \cdot q^{(r-1)n}  \right),$$
where $\kappa'$ is some positive constant defined explicitly in \cite[Lemma 7 of Section 4]{liuturan}. This satisfies condition \eqref{star} with $\kappa = \kappa' \left( \frac{q^r}{q^r-1} \right)$ and $\theta = \epsilon$ for any $\epsilon \in (1-1/r,1)$. Thus, \thmref{erdoskacforomegaA-hfree} and \thmref{erdoskacforomegaA} gives the Erd\H{o}s-Kac theorems $\omega_\mcA(\mfm)$ over $h$-free and $k$-full effective 0-cycles in a geometrically irreducible projective variety of dimension $r$ as the following:

\begin{cor}
Let $r, n \in \mathbb{Z}_{>0}$. Let $h \geq 2$ be an integer. Let $V/\mathbb{F}_q$ be a geometrically irreducible projective variety of dimension $r$. Let $\mathcal{S}_h(n)$ be the set of $h$-free effective $0$-cycles in $V$ of degree less than or equal to $n$.  Let $\mcA$ be any sequence from Types 1-4 and Type 5 with $k = 1$. Then for $a \in \mathbb{R}$, we have
$$\lim_{n \rightarrow \infty} \frac{1}{|\mathcal{S}_h(n)|} \bigg| \left\{ \mfm \in \mathcal{S}_h(n) \ : \ q^{r \deg \mathfrak{m}} \geq 3, 
\ \frac{\frac{1}{a_1} \omega_\mcA(\mfm) - \log \log q^{r \deg \mathfrak{m}}}{\sqrt{\log \log q^{r \deg \mathfrak{m}}}} \leq a \right\} \bigg| = \Phi(a).$$
\end{cor}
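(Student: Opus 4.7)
The plan is to apply \thmref{erdoskacforomegaA-hfree} directly. I would set $\mcp$ to be the set of closed points of $V/\mathbb{F}_q$ with $\deg \mfp$ equal to the length of the corresponding $\textnormal{Gal}(\bar{\mathbb{F}}_q/\mathbb{F}_q)$-orbit, let $\mcm$ be the free abelian monoid it generates (the effective $0$-cycles), equip it with the norm $N(\mfm) = q^{r \deg \mfm}$, and take $X = \{q^{rz} : z \in \mathbb{Z}\}$. Condition~\eqref{star} has already been established in the paragraph preceding the corollary: Liu's counting estimate
\[
\sum_{\substack{\mfm \\ \deg \mfm \leq n}} 1 = \kappa' \Big(\tfrac{q^r}{q^r-1}\Big) q^{rn} + O\big( n \cdot q^{(r-1)n} \big)
\]
yields \eqref{star} with $\kappa = \kappa' q^r/(q^r-1)$ and any $\theta \in (1 - 1/r, 1)$, since the factor $n \cdot q^{(r-1)n}$ is absorbed into $(q^r)^{\theta n}$ for such $\theta$.

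Next, I would verify that each of the admissible sequences $\mcA$ listed in the hypothesis satisfies the growth condition of \thmref{erdoskacforomegaA-hfree}, namely $a_1 \neq 0$ and $a_i \ll B^i$ with $0 < B \leq 2^{1 - \alpha}$ for some $\alpha > 0$. This is routine: for Type 1 ($a_i = 1$), Type 3 ($a_i = \log(i+1)$), and Type 4 ($a_i = (-1)^{i-1}$), the sequences grow at most logarithmically, so any $B \in (1, 2)$ works; for Type 2 ($a_i = i$), polynomial growth is dominated by any such exponential; and for Type 5 with $k=1$ (so $a_1 = 1$, $a_i = 0$ for $i \geq 2$), the bound is trivial. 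In every case $a_1 \neq 0$, as required.

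With condition~\eqref{star} in place and the growth hypothesis on $\mcA$ verified, \thmref{erdoskacforomegaA-hfree} applies and delivers the stated Gaussian limit for $\omega_\mcA(\mfm)/a_1$ over $h$-free effective $0$-cycles. There is no genuine obstacle here, as the work reduces to two checks against an already proven theorem; the only minor subtlety is the bookkeeping needed to confirm that the non-principal error term $O(n \cdot q^{(r-1)n})$ in the counting estimate for $\mcm$ fits the shape $O(x^\theta)$ required by~\eqref{star} after translating from degrees to norms, which is handled by slightly enlarging the exponent from $1 - 1/r$ to any $\theta \in (1 - 1/r, 1)$.
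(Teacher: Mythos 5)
Your proposal matches the paper's own argument: the paper likewise verifies condition \eqref{star} for effective $0$-cycles using Liu's count $\kappa'\big(\tfrac{q^r}{q^r-1}\big)q^{rn} + O(n\,q^{(r-1)n})$ with $\theta \in (1-1/r,1)$, and then invokes \thmref{erdoskacforomegaA-hfree} directly (the growth checks on the Type 1--5 sequences being handled exactly as you describe, in the paper's introductory discussion). The proposal is correct and essentially identical in approach.
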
 
\begin{cor}
Let $r,n \in \mathbb{Z}_{>0}$. Let $k \geq 1$ be an integer. Let $V/\mathbb{F}_q$ be a geometrically irreducible projective variety of dimension $r$. Let $\mathcal{N}_k(n)$ be the set of $k$-full effective $0$-cycles in $V$ of degree less than or equal to $n$. Let $\mcA$ be any sequence from Types 1-5. Then for $a \in \mathbb{R}$, we have
$$\lim_{n \rightarrow \infty} \frac{1}{|\mathcal{N}_k(n)|} \bigg| \left\{ \mfm \in \mathcal{N}_k(n) \ : \ q^{r \deg \mathfrak{m}} \geq 3, 
\ \frac{\frac{1}{a_k} \omega_\mcA(\mfm) - \log \log q^{r \deg \mathfrak{m}}}{\sqrt{\log \log q^{r \deg \mathfrak{m}}}} \leq a \right\} \bigg| = \Phi(a).$$
\end{cor}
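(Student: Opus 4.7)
The plan is to deduce this corollary directly from \thmref{erdoskacforomegaA} once we verify that the geometric setup meets all of its hypotheses. First I would fix the monoid framework as described just above: let $\mcp$ be the set of closed points of $V/\mathbb{F}_q$, let $\mcm$ be the free abelian monoid on $\mcp$ (whose elements are the effective $0$-cycles), let the norm be the $q^r$-power degree map $N(\mfm)=q^{r\deg\mfm}$, and take $X=\{q^{rz}:z\in\mathbb{Z}\}$. The countability and local finiteness of $\mcp$ follow from \cite[Lemma 3.11]{lorenzini}, which as noted in the excerpt makes the monoid eligible for our machinery.

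Next, I would verify condition \eqref{star}. The third author's count from \cite[Remark 1 of Section 4]{liuturan}, which has already been recalled in the excerpt, gives
\[
\sum_{\deg\mfm\le n}1=\kappa'\left(\frac{q^r}{q^r-1}\right)q^{rn}+O\!\left(n\,q^{(r-1)n}\right).
\]
Writing $x=q^{rn}$, this is exactly $\mcm(x)=\kappa x+O(x^\theta)$ with $\kappa=\kappa'\,q^r/(q^r-1)>0$ and any fixed $\theta\in(1-\tfrac{1}{r},1)$; here the logarithmic $n$-factor is absorbed into the exponent by enlarging $\theta$ slightly. This is the only input required from the arithmetic geometry of $V$.

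It remains to check, for each of the five types of sequence $\mcA$ listed at the start of Section \ref{applications}, that $a_k\neq 0$ and that the growth hypothesis of \thmref{erdoskacforomegaA}, namely $a_i\ll B^i$ for some $B\le 2^{1/k-\alpha}$ with $\alpha>0$, is satisfied. This holds trivially in every case: for Type (1) the sequence is constant, for Types (2) and (3) we have polynomial and logarithmic growth respectively so $a_i\ll 2^{i/(2k)}$, for Type (4) the sequence is bounded by $1$, and for Type (5) only $a_k$ is nonzero. In each instance the condition $a_k\neq 0$ is immediate, with the convention $a_1\neq 0$ when $k=1$. With \eqref{star} established and the hypothesis on $\mcA$ verified, \thmref{erdoskacforomegaA} applies verbatim and yields the stated limit. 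There is no essential obstacle; the only mildly delicate point is recognizing that the logarithmic factor in the error term from \cite{liuturan} can be swallowed into an arbitrarily small enlargement of $\theta<1$, after which the rest is a direct citation.
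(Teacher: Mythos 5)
Your proposal is correct and follows the same route as the paper: verify condition \eqref{star} via the count of effective $0$-cycles from \cite[Remark 1 of Section 4]{liuturan} with $\kappa=\kappa'q^r/(q^r-1)$ and $\theta=\epsilon$ for any $\epsilon\in(1-1/r,1)$ (absorbing the factor of $n$ into the exponent, exactly as the paper does implicitly), check the growth bound $a_i\ll 2^{i/(2k)}$ and $a_k\neq 0$ for each of the five types, and then cite \thmref{erdoskacforomegaA} verbatim. No gaps.
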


\bibliographystyle{plain} 
\bibliography{mybib.bib} 

\begin{thebibliography}{10}

\bibitem{dkl5}
S.~Das, W.~Kuo, and Y.-R. Liu.
\newblock On the distribution of the number of distinct generators of $h$-free and $h$-full elements in an abelian monoid.
\newblock Submitted, 2024.

\bibitem{dkl3}
S.~Das, W.~Kuo, and Y.-R. Liu.
\newblock A subset generalization of the \text{Erd\H{o}s}-{K}ac theorem over number fields with applications.
\newblock Submitted, 2024.

\bibitem{elmamartin}
E.~Elma and G.~Martin.
\newblock Distribution of the number of prime factors with a given multiplicity, 2024.

\bibitem{ErdosKac}
P.~Erd\H{o}s and M.~Kac.
\newblock The {G}aussian law of errors in the theory of additive number theoretic functions.
\newblock {\em Amer. J. Math.}, 62:738--742, 1940.

\bibitem{lg}
J.~A. G\'omez and M.~Lal\'in.
\newblock Prime factors with given multiplicity in $h$-free and $h$-full polynomials over function fields.
\newblock Pre-print, 2023.

\bibitem{jk2}
J.~Knopfmacher.
\newblock {\em Abstract analytic number theory}.
\newblock Dover Books on Advanced Mathematics. Dover Publications, Inc., New York, second edition, 1990.

\bibitem{lz}
M.~Lal\'{\i}n and Z.~Zhang.
\newblock The number of prime factors in {$h$}-free and {$h$}-full polynomials over function fields.
\newblock {\em Publ. Math. Debrecen}, 104(3-4):377--421, 2024.

\bibitem{Landau}
E.~Landau.
\newblock {\em Einf\"{u}hrung in die elementare und analytische {T}heorie der algebraischen {Z}ahlen und der {I}deale}.
\newblock Chelsea Publishing Co., New York, 1949.

\bibitem{liu}
Y.-R. Liu.
\newblock A generalization of the \text{Erd\H{o}s}-{K}ac theorem and its applications.
\newblock {\em Canad. Math. Bull.}, 47(4):589--606, 2004.

\bibitem{liuturan}
Y.-R. Liu.
\newblock A generalization of the {T}ur\'{a}n theorem and its applications.
\newblock {\em Canad. Math. Bull.}, 47(4):573--588, 2004.

\bibitem{lorenzini}
Dino Lorenzini.
\newblock {\em An invitation to arithmetic geometry}, volume~9 of {\em Graduate Studies in Mathematics}.
\newblock American Mathematical Society, Providence, RI, 1996.

\bibitem{mmp}
M.~R. Murty, V.~K. Murty, and S.~Pujahari.
\newblock An all-purpose \text{Erd\H{o}s}-{K}ac theorem.
\newblock {\em Math. Z.}, 305(3):Paper No. 45, 18, 2023.

\bibitem{mr}
M.~Rosen.
\newblock {\em Number theory in function fields}, volume 210 of {\em Graduate Texts in Mathematics}.
\newblock Springer-Verlag, New York, 2002.

\end{thebibliography}

\end{document}